\theoremstyle{plain}
\newtheorem{theorem}{Theorem}[section]
\newtheorem{lemma}[theorem]{Lemma}
\newtheorem{prop}[theorem]{Proposition}
\newtheorem{cor}[theorem]{Corollary}
\theoremstyle{definition}
\newtheorem{exx}[theorem]{Example}
\newtheorem{deff}[theorem]{Definition}
\newcommand{\F}{\mathbb{F}}
\newcommand{\Q}{\mathbb{Q}}
\newcommand{\C}{\mathbb{C}}
\newcommand{\FR}{\mathbb{F}R_{\mathbb{F}}}
\newcommand{\FRG}{\mathbb{F}R_{\mathbb{F},G}}
\newcommand{\CR}{\mathbb{C}R_{\mathbb{C}}}
\newcommand{\CRG}{\mathbb{C}R_{\mathbb{C},G}}
\newcommand{\PKQ}{\mathcal{P}_{kR_{\mathbb{Q}}}}
\newcommand{\PKQG}{\mathcal{P}_{kR_{\mathbb{Q},G}}}
\newcommand{\E}{\mathbb{E}}
\newcommand{\Z}{\mathbb{Z}}
\newcommand{\RF}{kR_{\mathbb{F}}}
\newcommand{\RQ}{kR_{\mathbb{Q}}}
\newcommand{\RQG}{k R_{\mathbb{Q},G}}
\newcommand{\RFG}{kR_{\mathbb{F},G}}
\newcommand{\ER}{\widehat{kR_{\mathbb{Q}}}}
\newcommand{\ERG}{\widehat{kR_{\mathbb{Q},G}}}
\newcommand{\ERFG}{\widehat{kR_{\mathbb{F},G}}}
\title{On the ideals and essential algebras of shifted functors of linear representations.}
\author{Benjam\'in Garc\'ia}
\affil{Centro de Ciencias Matem\'aticas, UNAM campus Morelia\\ e-mail address: benjamingarcia@matmor.unam.mx}
\begin{document}
\maketitle

\begin{abstract}
We present a study on the shifted Green biset functors $\RFG$ of linear $\F$-representations with coefficients over $k$, for fields $k$ and $\F$ of characteristic zero and a finite group $G$. We provide a criterion for the vanishing of their essential algebras and we prove that the condition of uniqueness of minimal groups for simple modules holds for these functors. We give a parametrization of a family of simple $\RQG$-modules. We also prove the semisimplicity of the category of $\CRG$-modules by means of an equivalence with a category of modules over a semisimple algebra.\\

\textit{Keywords:} Green biset functor, ring of linear representations, split semisimple algebra.
\end{abstract}

\section{Introduction.}

A Green biset functor $A$ is a biset functor over a unitary commutative ring $k$, together with a family of bilinear products. There is a natural way to define ideals and modules over $A$, and the study and classification of these functors become an interesting problem as it takes part on the wider problem of classification of $k$-linear functors over $k$-linear categories. Green biset functors arise naturally from representation theory of finite groups, which at the same time provides tools for the understanding of their internal structure and their modules. Many of these functors have been extensively studied, and for some of them, families of simple modules and ideals have been parametrized by means of the most diverse methods (see, for example: Barker \cite{LB2,LB}, Bouc \cite{SBb2}, Ducellier \cite{MD1} \& Romero \cite{ROM2,ROM1}).

For a Green biset functor $A$, a known result due to Nadia Romero \cite{ROM1} gives a parametrization of its simple modules by means of isomorphism classes of seeds $(H,V)$ consisting of a group $H$ such that the essential algebra $\widehat{A}(H)$ is non-zero, and a simple $\widehat{A}(H)$-module $V$, under the assumption that minimal groups for simple modules are unique up to group isomorphism. Romero pointed out in \cite{ROM2} that this condition does not hold in general: the monomial Burnside functor $kB^1_{C_4}$ has a simple module for which there are two non-isomorphic minimal groups. However, even if we know that the condition of uniqueness holds for $A$, a deeper understanding of its simple modules requires to go further in the study of its essential algebras and their modules. On the one hand, we would like to find necessary and sufficient conditions on a group $H$ for $\widehat{A}(H)$ to be zero; on the other hand, if we know that $\widehat{A}(H)$ is not zero, we would like to know all of its simple modules. Both problems are in general very complicated, and for most Green biset functors, the best we can expect are partial solutions.

If $k$ is a field of characteristic zero, the functor $\RQ$ and its modules, known as rhetorical biset functors, were studied by Laurence Barker \cite{LB}. Barker proved that the condition of uniqueness on minimal groups for simple modules holds for $\RQ$, and simple rhetorical biset functors are parametrized by isomorphism classes of seeds $(H,V)$ where $H$ is a cyclic group and $V$ is a primitive $k$Out$(H)$-module. 

This article is devoted to the study of the shifted functors $\RFG$, its essential algebras and its ideals, for fields $k$ and $\F$ of characteristic zero and $G$ a finite group. These functors are Green biset functors and projective $\RF$-modules at the same time, and making $G$ run over a set of representatives of the isomorphisms classes of finite groups, we get a set of projective generators of the category $\RF-\mathcal{M}od$ of $\RF$-modules. 

In Section 3, we introduce $(\F,G)$-rhetorical biset functors as modules over $\RFG$, and $G$-rhetorical biset functors as modules over $\RQG$; these concepts generalize rhetorical biset functors. We give a characterization of the lattice of ideals of $\RFG$ by means of an isomorphism of lattices between the set of ideals $\mathfrak{I}_{k,\F,G}$ of $\RFG$ and the power set of the set $\Omega(k,\F,G)$, defined to be the orbit space of an action of $Gal(k[\omega],k)$ on the set $c_{\F}(G)$ of $\F$-conjugacy classes of $G$, where $\omega$ is an $n$-th primitive root of $1$ and $n$ is the exponent of $G$. This implies that $\RFG$ is a semisimple $(\F,G)$-rhetorical biset functor. Then we prove that the essential algebras of $\RFG$ vanish for non-cyclic groups, and that if $H$ is a cyclic group such that $\ERFG(H)$ is non-zero, then $\ER(H)$ is non-zero. This implies that the condition of uniqueness of minimal groups holds for $\RFG$. Finally, we provide a parametrization of the family of simple $G$-rhetorical biset functors whose minimal groups have order relatively prime to the order of $G$.

In Section 4, we focus on the case $k=\F=\C$. Romero proved in \cite{ROM1} that the functor $\CR$ is a simple $\CR$-module, by proving that in $\mathcal{P}_{\CR}$, the hom-sets are generated by morphisms which factor through the trivial group. We prove that this property holds in the shifted case, implying that the trivial group is minimal for any $\CRG$-module. Then it is natural to think that any $\CRG$-module is completely determined by its value at the trivial group. This is actually so, and we provide an equivalence between the category of $\CRG$-modules and $\CR(G)-Mod$, implying among other things the semisimplicity of the first one.

\section{Preliminaries.}

Let $G$ and $H$ be finite groups. An \textit{$(H,G)$-biset} $X$ is a set with a left action by $H$ and a right action by $G$ such that these actions commute. We will often write \textit{$_HX_G$ is a biset} to say that $X$ is an $(H,G)$-biset The opposite biset $X^{op}$ is the $(G,H)$-biset which as set is $X$ but with actions defined by $g\cdot x \cdot h=h^{-1}xg^{-1}$ for $x\in X^{op}$, $g\in G$ and $h\in H$. If $\phi: H\longrightarrow G$ is a group homomorphism, then $G$ becomes an $(H,G)$-biset that we denote by $_{H^{\phi}}G_G$, with actions defined by $h\cdot g \cdot g':=\phi(h)gg'$ for $h\in H$, $g\in {_{H^{\phi}}G_G}$ and $g'\in G$, while $_GG_{^{\phi}H}$ stands for its opposite. Important cases of this construction are the following: if $i:H\longrightarrow G$ is an inclusion, then the \textit{induction from $H$ to $G$} is the biset Ind$^G_H={_GG_{^iH}}$ and the \textit{restriction from $G$ to $H$} is Res$^G_H={_{H^i}G_G}$; if $N\unlhd G$ and $\pi:G\longrightarrow G/N$ is the canonical projection, then the \textit{deflation from $G$ to $G/N$} is Def$_{G/N}^G={_{G/N}G/N_{^{\pi}G}}$ and the \textit{inflation from $G/N$ to $G$} is Inf$^G_{G/N}=_{{G^{\pi}}{G/N}_{G/N}}$; if $\phi:H\longrightarrow G$ is a group isomorphism, $_GG_{^{\phi}H}$ is often denoted by Iso$(\phi)$. These bisets are often known as \textit{basic biset operations}.

We write $B(H,G)$ for the Burnside ring of the category of finite $(H,G)$-bisets. If $K$ is another finite group and $_KY_H$ is a biset, there is a left action by $H$ in $Y\times X$ given by $h\cdot (y,x)=(yh^{-1},hx)$, for $h\in H$ and $(y,x)\in Y\times X$. We define the \textit{composition of $Y$ and $X$}, denoted by $Y\circ X$ or $Y\times_HX$, as the $(K,G)$-biset which as set is the orbit space of the action defined above with $k\cdot [y,x]\cdot g=[ky,xg]$ for $k\in K$, $g\in G$ and where $[y,x]$ denotes the class of $(y,x)$ in $Y\circ X$. This operation is easily checked to be biadditive and associative up to isomorphism, and it holds that $H\circ X\cong X\cong X\circ G$ for any biset $_HX_G$, so it induces a biadditive application $B(K,H)\times B(H,G)\longrightarrow B(K,G)$ that we call \textit{composition}. If $_HX_G$ and $_LY_K$ are bisets, $X\times Y$ becomes naturally a $(H\times L,G\times K)$-biset; this induces a biadditive aplication $B(H,G)\times B(L,K)\longrightarrow B(H\times L, G\times K)$, and we denote by $\alpha \times \beta$ the image of $(\alpha,\beta)\in B(H,G)\times B(L,K)$ under this map.

The \textit{biset category} $\mathcal{C}$ has all finite groups as object class and hom-sets given by $\mathcal{C}(G,H)=B(H,G)$ for any pair of finite groups $G$ and $H$, with composition induced by the composition of bisets and identity for $G$ given by the isomorphism class of the $(G,G)$-set $G$. The biset category is a preadditive category. If $k$ is a commutative ring with unit and $\mathcal{D}$ is a preadditive subcategory of $\mathcal{C}$, we can consider the $k$-linearization $k\mathcal{D}$ of $\mathcal{D}$ which is the category whose objects are the same of $\mathcal{D}$ and hom-sets given by $k\mathcal{D}(G,H)=k\otimes \mathcal{D}(G,H)$ for any pair of finite groups $G$ and $H$ with composition induced from that of $\mathcal{D}$ by $k$-linear extension.

A \textit{biset functor for $\mathcal{D}$ over $k$} is a $k$-linear functor from $k\mathcal{D}$ to $k-Mod$. We denote by $\mathcal{F}_{\mathcal{D},k}$ the category of biset functors for $\mathcal{D}$ over $k$ with morphisms given by natural transformations. By now we are only interested in the case $\mathcal{D}=\mathcal{C}$, so for a biset functor we will always mean a biset functor for $\mathcal{C}$ over $k$, without danger of confusion. If $F$ is a biset functor, a finite group $H$ is said to be \textit{minimal for $F$} if $F(H)\neq 0$ and $F(K)=0$ for any finite group $K$ such that $|K|<|H|$.

The Burnside functor $kB$ and the \textit{functor of linear representations} $\RF$ for a field $\F$ of characteristic zero are biset functors. We recall the definition of $\RF$. For any finite group $G$, the \textit{ring of linear representations of $G$}, denoted by $R_{\F}(G)$, is the Grothendieck group of the category of finitely generated $\F G$-modules with product induced by tensor product over $\F$, and  for any finite biset $_HX_G$, the map sending the isomorphism class of a finitely generated $\F G$-module $M$ to the isomorphism class of the $\F H$-module $\F X \otimes_{\F G} M$ induces a group homomorphism $R_{\F}(X):R_{\F}(G)\longrightarrow R_{\F}(H)$, thus $R_{\F}$ is a biset functor over $\Z$. If $M$ is a finitely generated $\F G$-module, then the \textit{$\F$-character afforded by $M$} is the function $\chi_M:G\longrightarrow \F$ defined on an element $g$ of $G$ as the trace of $\psi(g)$, where $\psi: G\longrightarrow GL_{\F}(M)$ is the linear representation of $G$ on $M$, and the \textit{ring of $\F$-characters} of $G$ is the abelian group of all the $\F$-valuated functions on $G$ which can be expressed as the difference of characters afforded by modules, with product defined pointwise. Since $\F$ has characteristic zero, then $R_{\F}(G)$ and the ring of $\F$-characters on $G$ are isomorphic, and we often identify these rings. Then $\RF(G)$ is defined as $k\otimes R_{\F}(G)$, while $\RF(X):\RF(G) \longrightarrow \RF(H)$ is the $k$-linear extension of $R_{\F}(X)$. 

\subsection{Modules over Green biset functors.}

Now we come to the definition of Green biset functor, as it appears in Bouc \cite[Chapter 8]{SBb2}. Recall that the category $grp$ of all finite groups is a monoidal category with respect to the cartesian product, where the association arrows are given by the canonical isomorphisms $\alpha_{L,K,H}: L\times (K\times H)\longrightarrow (L\times K)\times H$, and the left and right unit arrows are given by the canonical isomorphisms $\lambda_H:1\times H\longrightarrow H$ and $\rho_H: H\times 1\longrightarrow H$ respectively, for any finite groups $L$, $K$ and $H$.

\begin{deff}
A \textit{Green biset functor $A$} is a biset functor together with bilinear \textit{products} 
$$A(K)\times A(H)\longrightarrow A(K \times H)$$
denoted by $(a,b)\mapsto a\times b$, for any pair of finite groups $K$ and $H$, satisfying the following conditions:
\begin{enumerate}
\item (Associativity) Let $H$, $K$ and $L$ be finite groups. Then 
$$A(Iso(\alpha_{L,K,H}))(a\times (b\times c)) = (a\times b)\times c$$
for any $a\in A(L)$, $b\in A(K)$ and $c\in A(H)$.
\item (Identity element) There exists an \textit{identity element} $\epsilon \in A(1)$ such that
$$A(Iso(\lambda_H))(\epsilon \times a)=a=A(Iso(\rho_H))(a \times \epsilon)$$
for any finite group $H$ and any $a \in A(H)$.
\item (Functoriality) Let $H$, $K$, $L$ and $T$ be finite groups. If $\alpha \in kB(T,K)$ and $\beta \in kB(L,H)$, then
$$A(\alpha)(a) \times A(\beta )(b)=A(\alpha \times \beta)(a\times b)$$
for any $a\in A(K)$ and $b\in A(H)$. 
\end{enumerate}

If $C$ is another Green biset functor, a \textit{morphism of Green biset functors} $f:A\rightarrow C$ is a morphism  of biset functors such that $f_{K\times H}(a \times b)= f_K(a) \times f_H(b)$ for any finite groups $K$ and $H$ and any $a \in A(K)$ and $b \in A(H)$, and $f_1(\epsilon)=\epsilon$.
\end{deff}

There are many other equivalent definitions of a Green biset functor. For most of our purposes in this paper, this definition is easier to handle; however, the following proposition gives an equivalent definition that will be useful in the following section.

\begin{prop}[Romero {{\cite[4.2.3]{ROMthesis}}}]
Let $A$ be a biset functor. Then the following statements are equivalent:
\begin{enumerate}
\item A is a Green biset functor.
\item For any finite group $H$, $A(H)$ is an associative $k$-algebra with unit, and for any homomorphism of finite groups $\psi: H\longrightarrow K$, the following statements hold:
\begin{enumerate}
\item $A(_{H^{\psi}}K_K): A(K)\longrightarrow A(H)$ is a homomorphism of unitary $k$-algebras. 
\item (Frobenius identities) If $a \in A(H)$ and $b\in A(K)$, then
$$A(_KK_{^{\psi}H})(a)b=A(_KK_{^{\psi}H})(aA(_{H^{\psi}}K_K)(b)),$$
$$bA(_KK_{^{\psi}H})(a)=A(_KK_{^{\psi}H})(A(_{H^{\psi}}K_K)(b)a).$$
\end{enumerate}
\end{enumerate}
\end{prop}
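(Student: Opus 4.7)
The plan is to translate between external products and internal algebra structures via the diagonal homomorphisms $\delta_H: H \to H \times H$ and the canonical projections from product groups.

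For $(1)\Rightarrow(2)$: given the external products, endow each $A(H)$ with the multiplication
\[
a\cdot b := A(_{H^{\delta_H}}(H\times H)_{H\times H})(a\times b)
\]
and unit $1_{A(H)}:=A(_{H^{u_H}}1_1)(\epsilon)$, where $u_H:H\to 1$ is the trivial map. Associativity of $\cdot$ follows from the Green associativity axiom together with the identity $(\delta_H\times\operatorname{id}_H)\circ\delta_H=\alpha_{H,H,H}\circ(\operatorname{id}_H\times\delta_H)\circ\delta_H$ of homomorphisms $H\to(H\times H)\times H$; the unit laws follow from the identity axiom for $\epsilon$ together with $(u_H\times\operatorname{id}_H)\circ\delta_H=\lambda_H^{-1}$ and $(\operatorname{id}_H\times u_H)\circ\delta_H=\rho_H^{-1}$. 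Property (a) reduces to the biset identity encoding $(\psi\times\psi)\circ\delta_H=\delta_K\circ\psi$, combined with the Green functoriality axiom. The Frobenius identities in (b) unfold, via the definition of $\cdot$, into equalities between two biset compositions involving $_KK_{^\psi H}$, $_{H^\psi}K_K$ and the diagonals $\delta_H,\delta_K$, which are then verified by direct inspection.

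For $(2)\Rightarrow(1)$: given the algebra structures, define
\[
a\times b := A(_{(K\times H)^{p_K}}K_K)(a)\cdot A(_{(K\times H)^{p_H}}H_H)(b)
\]
in $A(K\times H)$, where $p_K,p_H$ are the canonical projections, and set $\epsilon:=1_{A(1)}$. Associativity of $\times$ and the identity axiom reduce to applying (a) to projection maps together with associativity and unitality of the algebra multiplication on each $A(K\times H\times L)$. For the functoriality axiom, by biadditivity of $\times$ and Lindner's decomposition of morphisms in $k\mathcal{C}$, it suffices to verify $A(\alpha)(a)\times b=A(\alpha\times\operatorname{id}_H)(a\times b)$ for $\alpha$ among the five basic biset operations (and symmetrically in the second slot): restriction and inflation are of the form $_{T^\psi}K_K$ and are handled by (a); induction, deflation and isomorphism are of the form $_KK_{^\psi T}$ and are handled by the Frobenius identities (b), from which the algebra-homomorphism property of $A$ applied to isomorphism bisets can also be deduced.

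The main obstacle is the verification of the Frobenius identities in the direction $(1)\Rightarrow(2)$: no conceptual shortcut replaces writing both sides out using the definition of $\cdot$, composing the relevant bisets explicitly, and checking that the two resulting elements of the appropriate Burnside group agree. Once these identities are in place, the entire equivalence reduces to a mechanical unwinding using bifunctoriality of $\times$ together with the coherence of $\alpha_{L,K,H}$, $\lambda_H$ and $\rho_H$ with respect to the diagonal and projection maps.
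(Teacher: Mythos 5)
Your proposal is correct and follows essentially the same route as the argument the paper relies on but does not reproduce (it cites Romero's thesis and Barker, 4.1 and 4.4): internal multiplication as restriction of the external product along the diagonal, external product as the internal product of the two inflations along the projections, and reduction of the functoriality axiom to the basic biset operations via (a) and the Frobenius identities. The single step you defer, the biset isomorphism underlying the Frobenius identities for an arbitrary homomorphism $\psi$ (comparing the diagonal restriction composed with ${_KK_{^{\psi}H}}\times K$ against ${_KK_{^{\psi}H}}$ composed with the diagonal restriction and $H\times{_{H^{\psi}}K_K}$), is a true and routine pointwise check, so there is no gap.
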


For details on how (1) gives rise to (2) and vice versa, see Barker \cite[4.1 \& 4.4]{LB2}. Examples of Green biset functors are the Burnside functor $kB$ and the functor of linear representations $\RF$ for a field $\F$ of characteristic zero with bilinear products induced by the external product of modules and identity element given by the class of the trivial module. The functor $kB$ is an initial object in the category of Green biset functors.

\begin{deff}
Let $A$ be a Green biset functor. A \textit{left $A$-module} is a biset functor $M$ together with bilinear applications $\xymatrix{
{A(K)\times M(H)}\ar[r]^-{\times} &{M(K \times H)}
}$ denoted by $(a,m)\mapsto a\times m$, for any pair of finite groups $H$ and $K$, satisfying the following conditions:
\begin{enumerate}
\item (Associativity) Let $H$, $K$ and $L$ be finite groups. Then 
$$M(Iso(\alpha_{L,K,H}))(a\times (b\times m)) = (a\times b)\times m$$
for any $a\in A(L)$, $b\in A(K)$ and $m\in M(H)$.
\item (Identity element) $M(Iso(\lambda_H))(\epsilon \times m)=m$ for any finite group $H$ and any $m \in M(H)$.
\item (Functoriality) Let $H$, $K$, $L$ and $T$ be finite groups. If $\alpha \in kB(T,K)$ and $\beta \in kB(L,H)$, then
$$A(\alpha)(a) \times M(\beta )(m)=M(\alpha \times \beta)(a\times m)$$
for any $a\in A(K)$ and $m\in M(H)$. 
\end{enumerate}
\end{deff}

If $N$ is another $A$-module, a \textit{morphism of left $A$-modules} $f:M\longrightarrow N$ is a morphism of biset functors such that $f_{K\times H}(a\times m)=a\times f_H(m)$ for any finite groups $K$ and $H$, any $a\in A(K)$ and any $m\in M(H)$. In a similar way, one can define a \textit{right A-module}. 

\begin{prop}[G. {{\cite[1.6]{tesina}}}] \thlabel{AMOD}
Let $A$ be a Green biset functor and $M$ be a biset functor. Then the following statements are equivalent: 
\begin{enumerate}
\item $M$ is a left $A$-module.
\item For any finite group $H$, $M(H)$ is a left $A(H)$-module, and for any homomorphism of finite groups $\psi: H\longrightarrow K$, the following statements hold:
\begin{enumerate}
\item $M(_{H^{\psi}}K_K)(am)=A(_{H^{\psi}}K_K)(a)M(_{H^{\psi}}K_K)(m)$ for any $a\in A(K)$ and $m\in M(K)$. 
\item (Frobenius identities) If $a \in A(H)$ and $m\in M(K)$, then
$$A(_KK_{^{\psi}H})(a)m=M(_KK_{^{\psi}H})(aM(_{H^{\psi}}K_K)(m)),$$
and if $b\in A(K)$ and $t\in M(H)$, then
$$bM(_KK_{^{\psi}H})(t)=M(_KK_{^{\psi}H})(A(_{H^{\psi}}K_K)(b)t).$$
\end{enumerate}
\end{enumerate}
\end{prop}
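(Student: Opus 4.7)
The plan is to adapt the proof of the analogous proposition for Green biset functors (cited above as Romero's \cite[4.2.3]{ROMthesis}) to the module setting. The dictionary between the two descriptions is controlled by the diagonal $\delta_H\colon H\to H\times H$ on one side and the projections $\pi_1\colon K\times H\to K$, $\pi_2\colon K\times H\to H$ on the other: the diagonal converts an external product into an internal action, while the pair of projections recovers an external product from internal actions via inflation.

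For (1)$\Rightarrow$(2), I would define the $A(H)$-action on $M(H)$ by
$$a\cdot m:=M\bigl({}_{H^{\delta_H}}(H\times H)_{H\times H}\bigr)(a\times m)$$
for $a\in A(H)$ and $m\in M(H)$. Associativity of this action follows from axiom (1) of the module definition together with the biset isomorphism expressing $(\delta_H\times \mathrm{id}_H)\circ\delta_H=(\mathrm{id}_H\times\delta_H)\circ\delta_H$ modulo $\alpha_{H,H,H}$, while the unit axiom follows from axiom (2) applied to the identity element $\epsilon\in A(1)$. For condition (a), apply axiom (3) with both $\alpha$ and $\beta$ equal to the restriction biset ${}_{H^\psi}K_K$, and then precompose with the diagonal bisets of $H$ and $K$; the resulting equation reduces to (a) because $\delta_K\circ\psi=(\psi\times\psi)\circ\delta_H$ at the level of bisets. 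The Frobenius identities (b) come from axiom (3) applied with one factor an identity biset and the other ${}_KK_{^\psi H}$ or ${}_{H^\psi}K_K$, followed by the standard factorization of the diagonal through $\psi$.

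For (2)$\Rightarrow$(1), I would set
$$a\times m:=A(\mathrm{Inf}^{K\times H}_{K})(a)\cdot M(\mathrm{Inf}^{K\times H}_{H})(m)$$
in $M(K\times H)$, where the inflations are induced by $\pi_1$ and $\pi_2$. Bilinearity is immediate, and the three module axioms follow by pushing inflations past the internal action using condition (a) and the algebra structure on $A(K\times L\times H)$, together with the fact that the inflations along $1\times H\to H$ and $H\times 1\to H$ are compatible with $\lambda_H$ and $\rho_H$; the functoriality axiom reduces, after factoring $\alpha\times\beta$ through the two projections, to the Frobenius identities (b). Finally one verifies that the two constructions are mutually inverse by observing that composing $\mathrm{Inf}^{H\times H}_{H}$ on each factor with $\delta_H$ collapses to the identity biset of $H$.

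The main obstacle is not conceptual but notational: each axiom translates into an equality in $M$ that must be witnessed by an isomorphism between compositions of bisets built from $\delta_H$, $\pi_1$, $\pi_2$ and the maps underlying $\alpha$, $\beta$, $\psi$. Since no new ingredient appears beyond those of the Green biset functor case, once the dictionary above is set up the module axioms in (1) imply those in (2) term by term, and vice versa, giving the desired equivalence.
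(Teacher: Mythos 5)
Your proposal is correct and follows exactly the standard dictionary (internal action via restriction along the diagonal $\delta_H$, external product via inflations along the projections, with condition (a) handling the restriction-type operations and the Frobenius identities handling the induction/deflation-type ones), which is precisely the argument the paper delegates by citation to the author's tesina and to Barker \cite[5.8 \& 5.11]{LB2} rather than reproducing. So there is no divergence from the paper's (implicit) proof; the only caveat is that a full write-up must supply the biset isomorphisms witnessing each axiom and the reduction of arbitrary bisets to the basic operations, as your final paragraph already acknowledges.
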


Details on how (1) and (2) are related can be found in Barker \cite[5.8 \& 5.11]{LB2}. For any Green biset functor $A$, we denote by $A-\mathcal{M}od$ the category of all its left $A$-modules. A \textit{left ideal} $I$ of $A$ is a biset subfunctor of $A$ which is also an left $A$-module by restriction of the product of $A$. Similarly, one can define a \textit{right ideal} and a \textit{two-sided ideal} of $A$. A Green biset functor is said to be \textit{simple} if it has no proper two-sided ideal other than 0. By \thref{AMOD}, $I$ is a left (resp. right, two-sided) ideal of $A$ if and only if it is a biset subfunctor of $A$ such that $I(H)$ is a left (resp. right, two-sided) ideal of $A(H)$ for any finite group $H$. 

If $H$ is a finite group, we denote by $\overrightarrow{H}$ the $(H\times H,1)$-biset $H$ with actions given $(h_1,h_2)\cdot h \cdot 1=h_1hh^{-1}_2$, while $\overleftarrow{H}$ denotes its opposite biset. If $L$, $K$ and $H$ are finite groups, we define
$$\beta\circ \alpha=A(L\times \overleftarrow{K} \times H)(\beta\times \alpha)$$
for any $\alpha \in A(K\times H)$ and $\beta \in A(L\times K)$, inducing a bilinear map $A(L\times K) \times A(K\times H) \longrightarrow A(L\times H)$.

\begin{deff}
Let $A$ be a Green biset functor. The \textit{category associated to $A$} is defined as the category $\mathcal{P}_A$ whose objects are all finite groups and whose hom-sets are given by $\mathcal{P}_A(H,K)=A(K\times H)$ for any pair of finite groups $H$ and $K$, with the composition given by the bilinear applications defined in the previous paragraph and identity for $H$ given by $Id_H=A(\overrightarrow{H})(\epsilon)$.
\end{deff}

The category $\mathcal{P}_A$ is $k$-linear. Let $Fun_k(\mathcal{P}_A,k-Mod)$ denote the category of all $k$-linear functors from $\mathcal{P}_A$ to $k-Mod$.

\begin{prop}[Bouc {{\cite[8.6.1]{SBb2}}}]
The categories $Fun_k(\mathcal{P}_A,k-Mod)$ and $A-\mathcal{M}od$ are $k$-linearly equivalent.
\end{prop}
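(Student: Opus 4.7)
The plan is to exhibit mutually quasi-inverse $k$-linear functors
$$\Phi : A-\mathcal{M}od \longrightarrow Fun_k(\mathcal{P}_A, k-Mod), \qquad \Psi : Fun_k(\mathcal{P}_A, k-Mod) \longrightarrow A-\mathcal{M}od,$$
each the identity on underlying $k$-modules, i.e., $\Phi(M)(H)=M(H)$ and $\Psi(F)(H)=F(H)$, so that the content of the proof is the translation of the structural data.

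First I would construct $\Phi$. For $\alpha\in\mathcal{P}_A(H,K)=A(K\times H)$, define
$$\Phi(M)(\alpha)\colon M(H)\to M(K),\qquad m\mapsto M(K\times\overleftarrow{H})(\alpha\times m),$$
where $\alpha\times m\in M(K\times H\times H)$ is the external product and $K\times\overleftarrow H$ is the evident $(K, K\times H\times H)$-biset. On morphisms of $A$-modules, set $\Phi(f)_H:=f_H$. I would then check: (i) $k$-linearity of each $\Phi(M)(\alpha)$, immediate from bilinearity of $\times$; (ii) preservation of the identity $\mathrm{Id}_H=A(\overrightarrow H)(\epsilon)$, which reduces via the functoriality axiom for modules, applied to $\overrightarrow H$, to the unit axiom on $\epsilon\times m\in M(1\times H)$; and (iii) preservation of composition, the key calculation.

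Next I would construct $\Psi$. Its biset-functor structure comes from the canonical $k$-linear functor $k\mathcal{C}\to\mathcal{P}_A$ induced by the unique morphism of Green biset functors $kB\to A$: to a biset $_KX_H$ one associates its image $\bar X\in A(K\times H)=\mathcal{P}_A(H,K)$ and puts $\Psi(F)(X):=F(\bar X)$. The $A$-module structure arises by viewing $a\in A(K)$ as a morphism in $\mathcal{P}_A(1,K)$, forming $a\times\mathrm{Id}_H\in\mathcal{P}_A(H,K\times H)$, and setting $a\times m:=F(a\times\mathrm{Id}_H)(m)$. The module axioms translate, via \thref{AMOD}, to identities between composites in $\mathcal{P}_A$ that $F$ respects automatically. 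For quasi-inverseness, since both compositions agree with the identity on underlying $k$-modules, only structure-map compatibility must be verified: for $\Psi\Phi\cong\mathrm{Id}$, applying $\Phi(M)$ to the image in $\mathcal{P}_A$ of a biset $X$ recovers $M(X)$ by the unit and identity axioms, and the product reconstructed on $\Psi(\Phi(M))$ agrees with the original by direct computation; the check for $\Phi\Psi\cong\mathrm{Id}$ is analogous, using that every $\alpha\in A(K\times H)$ factors in $\mathcal{P}_A$ as the composition of a biset operation with an external product of the form $a\times\mathrm{Id}_H$.

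The main obstacle is composition compatibility for $\Phi(M)$: for $\alpha\in A(J\times H)$ and $\beta\in A(K\times J)$, one must verify
$$M(K\times\overleftarrow J)\bigl(\beta\times M(J\times\overleftarrow H)(\alpha\times m)\bigr)=M(K\times\overleftarrow J\times H)\bigl((\beta\times\alpha)\times m\bigr),$$
which encodes $\Phi(M)(\beta\circ\alpha)=\Phi(M)(\beta)\circ\Phi(M)(\alpha)$. This reduces to a careful manipulation of external products, combining associativity of $\times$, biset functoriality of $M$, and the functoriality axiom for modules (which allows biset operations to be pushed past external products) in order to contract the middle copy of $J$ correctly. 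Once this identity is established, all remaining verifications are bookkeeping.
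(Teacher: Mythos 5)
Your proposal is correct and follows essentially the argument the paper itself defers to (it cites Bouc \cite[8.6.1]{SBb2} and Romero \cite[2.11]{ROM1} rather than proving the statement): the same pair of quasi-inverse $k$-linear functors, with $\Phi(M)(\alpha)(m)=M(K\times\overleftarrow{H})(\alpha\times m)$ and the biset-functor structure of $\Psi(F)$ pulled back along the canonical functor $k\mathcal{C}\to\mathcal{P}_A$ coming from $kB\to A$. The only blemish is notational: in your displayed composition identity the right-hand side should, after pushing $\beta\times\alpha$ past $m$ by functoriality, apply a biset contracting both the middle copies of $J$ and the two copies of $H$ (so $M\bigl((K\times\overleftarrow{H})\circ((K\times\overleftarrow{J}\times H)\times H)\bigr)$ applied to $(\beta\times\alpha)\times m$, not $M(K\times\overleftarrow{J}\times H)$), but your description of how the verification proceeds makes the intended computation clear.
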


Details on the proof for this result can be found in Romero \cite[2.11]{ROM1}.

\subsection{The problem of classification of simple modules.}

From now, by an $A$-module we always mean a left $A$-module. The classification of simple $A$-modules is in general a difficult problem. Under suitable conditions on $A$, there are nice parametrizations of simple $A$-modules. Recall that a finite group $H$ is \textit{minimal} for an $A$-module $M$ if it is minimal in the sense of biset functors.

\begin{deff}
Let $A$ be a Green biset functor and $H$ a finite group. The \textit{essential algebra of $A$ in $H$} is defined as the quotient
$$\widehat{A}(H)=\frac{End_{\mathcal{P}_A}(H)}{I_A(H)}$$
where 
$$I_A(H)= \sum_{|K|<|H|}\left\langle\mathcal{P}_A(K,H) \circ \mathcal{P}_A(H,K) \right\rangle$$
which is a two-sided ideal of $End_{\mathcal{P}_A}(H)$. We write $\widehat{a}$ for the class in $\widehat{A}(H)$ of an element $a$ of $A(H\times H)$.
\end{deff}

The essential algebras are unitary associative algebras, but in general we cannot say whether or not they vanish for an arbitrary group $H$. However, the following may be a useful criterion: if $f:A\rightarrow C$ is a morphism of Green biset functors, then it induces a homomorphism of unitary $k$-algebras $\widehat{f}_H: \widehat{A}(H) \rightarrow \widehat{C}(H)$ defined by the rule $\widehat{f}_H(\widehat{a})=\widehat{f_{H\times H}(a)}$, for any finite group $H$ and $a\in A(H\times H)$. Then if $\widehat{A}(H)$ is zero, so is $\widehat{C}(H)$.

If $S$ is an $A$-module, then $S(H)$ is naturally an $End_{\mathcal{P}_A}(H)$-module. If $H$ is minimal for $S$, then $S(H)$ becomes naturally an $\widehat{A}(H)$-module. If $S$ is simple, then $S(H)$ is simple as $\widehat{A}(H)$-module. Now, given a group isomorphism $\psi:H\longrightarrow K$, we define a $k$-algebra isomorphism $\widehat{A}(H) \longrightarrow \widehat{A}(K)$ by the rule $\widehat{a}\mapsto \widehat{b\circ a\circ b'}$ for $a\in A(H\times H)$, where $b=\lambda_{K\times H}($Iso$(\psi))$ and $b'=\lambda_{H\times K}($Iso$(\psi^{-1}))$; we write $^{\psi}M$ for the restriction of scalars of an $\widehat{A}(K)$-module $M$ via this isomorphism.

\begin{deff}
A \textit{seed} on $\mathcal{P}_A$ is a pair $(H,V)$ consisting of a finite group $H$ such that $\widehat{A}(H)$ is not zero and a simple $\widehat{A}(H)$-module $V$. A seed $(H,V)$ is \textit{isomorphic} to a seed $(K,W)$ if there exists a group isomorphism $\psi:H\longrightarrow K$ such that $^{\psi} W$ is isomorphic to $V$ as $\widehat{A}(H)$-modules.
\end{deff}

Now recall that for any finite group $H$, the evaluation at $H$ gives an exact functor 
$$ev_H: A-\mathcal{M}od \longrightarrow End_{\mathcal{P}_A}(H)-Mod$$ 
so it has left and right adjoints $L_H$ and $R_H$ respectively. The functor $L_H:End_{\mathcal{P}_A}(H)-Mod \longrightarrow A-\mathcal{M}od$ can be defined as the functor whose value at an $End_{\mathcal{P}_A}(H)$-module $V$ is the $A$-module $L_{H,V}$, whose value at a finite group $K$ is
$$L_{H,V}(K)=A(K\times H)\otimes_{End_{\mathcal{P}_A}(H)}V$$
and 
$$L_{H,V}(\alpha): L_{H,V}(K) \longrightarrow L_{H,V}(L)$$
$$\beta \otimes v\mapsto (\alpha \circ \beta) \otimes v$$
for any arrow $K\xrightarrow{\alpha} L$ in $\mathcal{P}_A$, while for any homomorphism of $End_{\mathcal{P}_A}(H)$-modules $\psi:V\longrightarrow W$, the arrows
$$(L_{H,\psi})_K: L_{H,V}(K) \longrightarrow L_{H,W}(K)$$
$$\beta \otimes v\mapsto \beta \otimes \psi(v)$$
define a natural transformation $L_{H,\psi}:L_{H,V}\longrightarrow L_{H,W}$. If $H$ is such that $\widehat{A}(H)\neq 0$ and $V$ is a simple $\widehat{A}(H)$-module, then $V$ is a simple $End_{\mathcal{P}_A}(H)$-module by restriction of scalars. Then the $A$-module $L_{H,V}$ has a unique maximal submodule $J_{H,V}$ and a unique simple quotient
$$S_{H,V}=L_{H,V}/J_{H,V}$$
with $H$ as minimal group and $S_{H,V}(H)\cong V$ as $\widehat{A}(H)$-modules. It is not hard to see that if $(K,W)$ and $(H,V)$ are isomorphic, then $S_{K,W}\cong S_{H,V}$.

\begin{prop}[Romero {{\cite[4.2]{ROM1}}}] \thlabel{thNadia}
Let $A$ be a Green biset functor such that any simple $A$-module has a unique minimal group up to group isomorphism. There is a bijection between the set of isomorphism classes of simple $A$-modules and the set of isomorphism classes of seeds on $\mathcal{P}_A$.
\end{prop}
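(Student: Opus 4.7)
The plan is to exhibit mutually inverse maps between isomorphism classes of seeds and isomorphism classes of simple $A$-modules. Let $\Phi$ send the class of a seed $(H,V)$ to the class of $S_{H,V}$, using the construction recalled just before the statement. Let $\Psi$ send the class of a simple $A$-module $S$ to the class of the pair $(H, S(H))$, where $H$ is a minimal group for $S$ and $S(H)$ is regarded as an $\widehat{A}(H)$-module.

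I would first check that both maps are well-defined on isomorphism classes. For $\Phi$, a group isomorphism $\psi:H\to K$ together with an isomorphism $V\cong {}^{\psi}W$ of $\widehat{A}(H)$-modules induces, via the automorphism of $\mathcal{P}_A$ given by $\psi$, an isomorphism $L_{H,V}\cong L_{K,W}$ of $A$-modules, which descends to $S_{H,V}\cong S_{K,W}$ because the unique maximal submodule is preserved. For $\Psi$, the hypothesis that any simple $A$-module has a unique minimal group up to isomorphism guarantees that a different choice of minimal group $K$ for $S$ comes with an isomorphism $\psi:H\to K$, and by naturality the $\widehat{A}(H)$-modules $S(H)$ and ${}^{\psi}S(K)$ coincide, so $(H,S(H))$ and $(K,S(K))$ represent the same isomorphism class of seed.

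Next I would verify $\Psi\circ\Phi=\mathrm{id}$: this is essentially spelled out in the excerpt, since $H$ is minimal for $S_{H,V}$ and $S_{H,V}(H)\cong V$ as $\widehat{A}(H)$-modules. The substance of the proof is then $\Phi\circ\Psi=\mathrm{id}$. Given a simple $S$ with minimal group $H$ and $V:=S(H)$, the adjunction $(L_H, ev_H)$ yields
$$\mathrm{Hom}_{A-\mathcal{M}od}(L_{H,V}, S)\ \cong\ \mathrm{Hom}_{End_{\mathcal{P}_A}(H)}(V, S(H)),$$
and I would let $\eta:L_{H,V}\to S$ correspond to $\mathrm{id}_V$ on the right-hand side. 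Then $\eta_H=\mathrm{id}_V$, so $\eta\neq 0$, and simplicity of $S$ forces $\eta$ to be surjective. To conclude, it suffices to show $\ker\eta\subseteq J_{H,V}$; then $\eta$ factors through $S_{H,V}$, and simplicity of $S_{H,V}$ promotes the resulting surjection $S_{H,V}\twoheadrightarrow S$ to an isomorphism.

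The one step that really requires attention is the inclusion $\ker\eta\subseteq J_{H,V}$. For this I would use the characterization of $J_{H,V}$ as the unique largest $A$-submodule of $L_{H,V}$ whose evaluation at $H$ is zero, a fact implicit in the statement recalled before the proposition: since $V$ is simple as $\widehat{A}(H)$-module and $H$ is minimal for $L_{H,V}$, any proper $A$-submodule of $L_{H,V}$ must vanish at $H$, and the sum of all such submodules is then itself proper and equal to $J_{H,V}$. Because $(\ker\eta)_H=\ker(\mathrm{id}_V)=0$, the submodule $\ker\eta$ lies in $J_{H,V}$, as required. The main obstacle is thus the verification of this description of $J_{H,V}$, which hinges on the minimality of $H$ and on the simplicity of $V$; everything else is a formal manipulation of the adjunction and of the unique-minimal-group hypothesis.
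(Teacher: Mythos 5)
Your route is the standard one (the argument of Romero's paper, which this article only cites and does not reproduce): $\Phi:(H,V)\mapsto S_{H,V}$, $\Psi: S\mapsto (H,S(H))$, with the adjunction producing $\eta:L_{H,V}\to S$ and the unique maximal submodule $J_{H,V}$ doing the rest. However, the step you yourself single out as the crux is justified by a premise that is both false in general and beside the point. You claim that every proper $A$-submodule of $L_{H,V}$ vanishes at $H$ ``since $V$ is simple and $H$ is minimal for $L_{H,V}$''. First, $H$ need not be minimal for $L_{H,V}$: the paper asserts minimality only for the quotient $S_{H,V}$, and indeed $L_{H,V}(K)=A(K\times H)\otimes_{End_{\mathcal{P}_A}(H)}V$ can be nonzero for $|K|<|H|$; what is true is that for such $K$ every composite $H\to K\to H$ lies in $I_A(H)$ and hence kills $V$, so these values die only after passing to $S_{H,V}$. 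Second, even if minimality held it would say nothing about the value at $H$ of a submodule. The ingredient actually needed is that $L_{H,V}$ is generated by its evaluation at $H$: every element of $L_{H,V}(K)$ is a sum of elements $\beta\otimes v = L_{H,V}(\beta)(Id_H\otimes v)$ with $\beta\in A(K\times H)$ and $Id_H\otimes v\in L_{H,V}(H)$. Since $L_{H,V}(H)\cong V$ is simple over $End_{\mathcal{P}_A}(H)$ (via the projection onto $\widehat{A}(H)$), a submodule $N$ has $N(H)=0$ or $N(H)=L_{H,V}(H)$, and in the latter case generation forces $N=L_{H,V}$. Hence every proper submodule vanishes at $H$, their sum is proper, and $J_{H,V}$ is exactly the largest submodule vanishing at $H$. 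This is a one-line repair, but as written the hinge of your proof rests on an incorrect justification.

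A smaller slip: from $\ker\eta\subseteq J_{H,V}$ it does not follow that $\eta$ factors through $S_{H,V}$ (that would require the opposite inclusion $J_{H,V}\subseteq\ker\eta$); what you get is a surjection $S\cong L_{H,V}/\ker\eta\twoheadrightarrow L_{H,V}/J_{H,V}=S_{H,V}$, which is an isomorphism because $S$ is simple and $S_{H,V}\neq 0$. Cleaner still: since $S$ is simple and $\eta$ is surjective, $\ker\eta$ is a maximal submodule, so uniqueness of the maximal submodule gives $\ker\eta=J_{H,V}$ directly. The rest of your argument (well-definedness of both maps, the use of the unique-minimal-group hypothesis for $\Psi$, and $\Psi\circ\Phi=\mathrm{id}$ from $S_{H,V}(H)\cong V$) is fine and matches the cited proof.
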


The condition of uniqueness on minimal groups does not hold in general, as it was exposed in Romero \cite[12]{ROM2}. We give some examples to the proposition.

\begin{exx}
The Burnside functor $kB$ is a Green biset functor, which turns out to be an initial object in the category of Green biset functors. There is an equivalence of categories between $\mathcal{F}_{\mathcal{C},k}$ and $kB-\mathcal{M}od$. Uniqueness of minimal groups holds for simple biset functors, and the classification of simple biset functors by means of isomorphism classes of seeds was given by Bouc in \cite{SB1} in 1996. The category $\mathcal{P}_A$ was introduced later by Bouc as a generalization of the biset category. More details can be found in Bouc \cite[Chapter 4]{SBb2}.
\end{exx}

\begin{exx}
If $H$ is a finite group and $\F$ is a field of characteristic zero, the linearization functor $X\mapsto \F X$ from $H-set$ to $\F H-mod$ induces a $k$-algebra homomorphism $\lambda_H: kB(H)\longrightarrow \RF(H)$, which defines a morphism of Green biset functors $\lambda: kB \longrightarrow \RF$ that we call \textit{linearization morphism}. Linearization is the only morphism of Green biset functors from $kB$ to $\RF$ since $kB$ is initial in the category of Green biset functors. The image $\Lambda=\lambda(kB)$ of $kB$ in $\RF$ is a Green biset functor, and its modules are known as \textit{rhetorical biset functors}. When $k$ is a field of characteristic zero, \thref{thNadia} applies to this functor since simple rhetorical biset functors are the simple biset functors annihilated by $ker\;\lambda$. Simple rhetorical biset functors were studied and classified by Barker in \cite{LB}.
\end{exx}

As we said before, even to know if $\widehat{A}(H)$ vanishes or not for a given $H$ can be a difficult problem. For the trivial group however, we have the following result.

\begin{lemma} \thlabel{L1}
Let $A$ be a Green biset functor. Then $\widehat{A}(1)\cong End_{\mathcal{P}_A}(1)\cong A(1)$ as $k$-algebras.
\end{lemma}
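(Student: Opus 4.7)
The plan is in two parts. For the first isomorphism $\widehat{A}(1)\cong End_{\mathcal{P}_A}(1)$, I would observe that by the definition of the essential algebra, the ideal
\[
I_A(1)=\sum_{|K|<1}\bigl\langle \mathcal{P}_A(K,1)\circ \mathcal{P}_A(1,K)\bigr\rangle
\]
is an empty sum, since no finite group has order strictly smaller than one; so $I_A(1)=0$ and $\widehat{A}(1)=End_{\mathcal{P}_A}(1)$ via the quotient map, which is automatically a $k$-algebra isomorphism.

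For $End_{\mathcal{P}_A}(1)\cong A(1)$, I would start from the identification $End_{\mathcal{P}_A}(1)=A(1\times 1)$ given by the definition of $\mathcal{P}_A$ and take the candidate
\[
\Phi:=A(Iso(\lambda_1)):A(1\times 1)\longrightarrow A(1)
\]
induced by the canonical unit iso $\lambda_1:1\times 1\to 1$; this is a $k$-linear isomorphism by biset functoriality. The task is then to check that $\Phi$ intertwines the two algebra structures, where $A(1)$ is viewed as a $k$-algebra via the equivalent formulation of Green biset functor given earlier. For the unit, $Id_1=A(\overrightarrow{1})(\epsilon)$ with $\overrightarrow{1}$ the unique one-point $(1\times 1,1)$-biset (which coincides with $Iso(\lambda_1^{-1})$), so $\Phi\circ A(\overrightarrow{1})=\mathrm{id}_{A(1)}$ by biset functoriality and hence $\Phi(Id_1)=\epsilon$. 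For multiplicativity, the Green-algebra product on $A(1)$ is $a\cdot b=A({}_{1^{\delta}}(1\times 1)_{1\times 1})(a\times b)$ for the diagonal $\delta:1\to 1\times 1$, and the biset ${}_{1^{\delta}}(1\times 1)_{1\times 1}$ again equals the unique one-point $(1,1\times 1)$-biset, so $a\cdot b=\Phi(a\times b)$. On the other hand, the composition in $\mathcal{P}_A$ reads $\beta\circ\alpha=A(1\times\overleftarrow{1}\times 1)(\beta\times\alpha)$, so by biset functoriality together with the Green-functor functoriality axiom, both $\Phi(\beta\circ\alpha)$ and $\Phi(\beta)\cdot\Phi(\alpha)=\Phi(\Phi(\beta)\times\Phi(\alpha))$ reduce to $A(X)(\beta\times\alpha)$ for the unique one-point $(1,1^4)$-biset $X$, hence agree.

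The main obstacle is purely bookkeeping: identifying the various canonical one-point bisets between trivial groups that appear in the definitions of composition in $\mathcal{P}_A$, of the external product $\times$ on $A$, and of the internal Green-algebra product on $A(1)$. Since any two bisets between trivial groups are uniquely isomorphic and $A$ is defined on isomorphism classes, these identifications are forced, and the whole verification reduces to repeated applications of the functoriality axiom of Definition 2.2.
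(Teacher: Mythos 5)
Your proof is correct and takes essentially the same route as the paper: both arguments reduce to identifying the one-point bisets between trivial groups that encode the composition in $\mathcal{P}_A$, the external product, and the internal Green-algebra product (the paper checks $a\circ b=ab$ inside $A(1\times 1)$ via the biset isomorphism $1\times\overleftarrow{1}\times 1\cong {}_{(1\times 1)^{\Delta}}1\times 1\times 1\times 1$, while you transport everything to $A(1)$ along $A(Iso(\lambda_1))$), and then both observe that $I_A(1)=0$ because no group has order smaller than $1$. One small caveat: your closing claim that \emph{any} two bisets between trivial groups are isomorphic is false (a $(1,1)$-biset is just a finite set, of arbitrary cardinality), but every biset actually occurring in your verification --- $Iso(\lambda_1)$, ${}_{1^{\delta}}(1\times 1)_{1\times 1}$, $1\times\overleftarrow{1}\times 1$, and their composites --- is a single point, so the identifications you invoke do hold.
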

\begin{proof}
We have
$$_{1\times 1}1 \times \overleftarrow{1} \times 1_{1\times 1\times 1\times 1}\cong _{(1  \times 1)^{\Delta} }1 \times 1  \times 1\times 1_{1\times 1\times 1\times 1}$$
as bisets, so we have 
$$a\circ b=A(1 \times \overleftarrow{1} \times 1)(a\times b)=A(_{(1  \times 1)^{\Delta} }1 \times 1  \times 1\times 1_{1 \times 1  \times 1\times 1})(a\times b)=ab$$
for all $a, b\in A(1\times 1)$. Then $k$-algebras $End_{\mathcal{P}_A}(1)$ and $A(1)$ are isomorphic. Since $I_A(1)=0$, it follows that $\widehat{A}(1)\cong A(1)$ as $k$-algebras.
\end{proof}

From now, if $A$ is a Green biset functor, $H$ is a finite group, $a\in A(H)$ and $_KX_H$ is a basic biset operation, we write $Xa$ instead of $A(X)(a)$, e.g., if $H\leq K$, then Ind$^K_H a$ means $A($Ind$^K_H)(a)$. The following result is part of Barker \cite[4.4]{LB2}, but stated and proved in a slightly different way.

\begin{lemma} \thlabel{L2}
Let $A$ be a Green biset functor. Then for all finite groups $H$ and $K$
$$Inf_H^{H\times 1}a \circ Inf^{1\times K}_Kb=a\times b$$
for all $a\in A(H)$ and $b\in A(K)$.
\end{lemma}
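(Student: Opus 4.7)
The plan is to unfold the definition of the composition $\circ$ in $\mathcal{P}_A$, push the operations inside via the functoriality axiom of a Green biset functor, and recognize the resulting biset as the identity biset on $H\times K$.

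First, $Inf_H^{H\times 1}a$ lies in $A(H\times 1)=\mathcal{P}_A(1,H)$ and $Inf^{1\times K}_Kb$ lies in $A(1\times K)=\mathcal{P}_A(K,1)$, so the composition is well defined as an element of $\mathcal{P}_A(K,H)=A(H\times K)$. By the definition of $\circ$ (taking $L=H$, middle group $1$ and domain group $K$), it equals
$$A(H\times\overleftarrow{1}\times K)\bigl(Inf_H^{H\times 1}a\times Inf^{1\times K}_Kb\bigr).$$

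Next, condition (3) of the Green biset functor definition, applied with $\alpha=Inf_H^{H\times 1}$ and $\beta=Inf^{1\times K}_K$, gives
$$Inf_H^{H\times 1}a\times Inf^{1\times K}_Kb = A\bigl(Inf_H^{H\times 1}\times Inf^{1\times K}_K\bigr)(a\times b).$$
Combining with the previous equality and using functoriality of $A$ on the biset category,
$$Inf_H^{H\times 1}a\circ Inf^{1\times K}_Kb = A\Bigl((H\times\overleftarrow{1}\times K)\circ\bigl(Inf_H^{H\times 1}\times Inf^{1\times K}_K\bigr)\Bigr)(a\times b).$$

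The remaining step, which is the main though routine obstacle, is to verify the biset isomorphism
$$(H\times\overleftarrow{1}\times K)\circ\bigl(Inf_H^{H\times 1}\times Inf^{1\times K}_K\bigr) \;\cong\; H\times K$$
of $(H\times K,H\times K)$-bisets, the right-hand side being the identity biset on $H\times K$. This requires tracking actions carefully through the tensor product over $H\times 1\times 1\times K$. Each inflation is the biset attached to the canonical isomorphism $H\cong H\times 1$ or $K\cong 1\times K$, and $\overleftarrow{1}$ is a one-point biset, so the tensor product collapses the middle factors of $1$ and yields $H\times K$ with its natural $(H\times K,H\times K)$-biset structure. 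Applying $A$ to this identity biset gives the identity map on $A(H\times K)$, and hence the whole expression equals $a\times b$, as desired.
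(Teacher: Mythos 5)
Your proof is correct and follows essentially the same route as the paper: unfold the composition in $\mathcal{P}_A$, apply the functoriality axiom to pull the two inflations out as the biset $Inf_H^{H\times 1}\times Inf^{1\times K}_K$, and identify the composite biset $(H\times\overleftarrow{1}\times K)\circ\bigl(Inf_H^{H\times 1}\times Inf^{1\times K}_K\bigr)$ with the identity biset $H\times K$ (the paper just makes this last isomorphism explicit via $[(h_1,1,k_1),(h_2,k_2)]\mapsto(h_1h_2,k_1k_2)$).
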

\begin{proof}
\begin{align*}
Inf_H^{H\times 1}a \circ Inf^{1\times K}_Kb &=A(H\times \overleftarrow{1} \times K)(A(_{H\times 1}H_H)(a)\times A(_{1\times K}K_K)(b))\\
&=A(H\times \overleftarrow{1} \times K \circ \; H \times K_{H\times K})(a \times b)=a\times b
\end{align*}
since
$$H\times \overleftarrow{1} \times K \circ \; _{H\times 1 \times 1 \times K}H \times K_{H\times K} \longrightarrow H\times K$$
$$[(h_1,1,k_1),(h_2,k_2)]\mapsto (h_1h_2,k_1k_2)$$
is an isomorphism of $(H\times K,H\times K)$-bisets.
\end{proof}

\subsection{The Yoneda-Dress construction.}

Let $F$ be a biset functor and $G$ be a finite group.

\begin{deff}
\textit{The Yoneda-Dress construction of $F$ in $G$}, or \textit{$F$ shifted by $G$}, is defined as the functor $F_G$ whose value at a group $H$ is $F_G(H)=F(H\times G)$, and  $F_G(\alpha)= F(\alpha \times G)$ for any $\alpha \in kB(K,H)$.
\end{deff}

It is straightforward that shifting by $G$ defines an endofunctor of $\mathcal{F}_{\mathcal{C},k}$ (see Bouc \cite[Section 8.2]{SBb2}). If $A$ is a Green biset functor, $A_G$ is again a Green biset functor with product given by 
$$a\times^d b= A(_{K\times H\times G^{\Delta}}K\times G\times H\times G_{K\times G\times H\times G})(a\times b)$$ 
for $a\in A(K\times G)$ and $b\in A(H\times G)$, where the product on the right-hand side is the product of $A$. Then the composition in $\mathcal{P}_{A_G}$ is given by 
$$\beta\circ  \alpha=A(_{L\times H\times G^{\Delta}}L\times \overleftarrow{K} \times H\times G\times G_{L\times K \times G\times K \times H\times G})(\beta\times \alpha)$$ 
for $\beta\in A_G(L\times K)$ and $\alpha\in A_G(K\times H)$, where again the product on the right-hand side is that of $A$.

If $M$ is an $A$-module, then $M_G$ is both an $A$-module and an $A_G$-module. By the Yoneda lemma, the functors $A_G$ are projective objects in $A-\mathcal{M}od$. If $[grp]$ is a set of representatives of isomorphism classes of finite groups, the functors $A_G$ with $G\in [grp]$ form a set of projective generators of $A-\mathcal{M}od$, so the module $\bigoplus_{G\in [grp]}A_G$ is a progenerator. The following result can be found as part 3 of Bouc \cite[8.6.1]{SBb2} without a proof, thus we prove it here.

\begin{prop} \thlabel{PP}
$A-\mathcal{M}od$ has enough projectives.
\end{prop}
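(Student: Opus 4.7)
The plan is to exhibit, for every $A$-module $M$, an epimorphism $P\twoheadrightarrow M$ with $P$ projective. Since the paragraph preceding the statement already notes that each shifted functor $A_G$ is projective in $A-\mathcal{M}od$, and since any direct sum of projective objects is again projective (direct sums in $A-\mathcal{M}od$ are computed pointwise in $k-Mod$), it will be enough to take $P$ to be a suitable direct sum of shifted functors.

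First, I would make explicit the Yoneda isomorphism associated to the representable functor $A_G$. Because $A_G(H)=A(H\times G)=\mathcal{P}_A(G,H)$, the $A$-module $A_G$ corresponds under the equivalence $A-\mathcal{M}od\simeq Fun_k(\mathcal{P}_A,k-Mod)$ of the previous proposition to the functor on $\mathcal{P}_A$ represented by $G$. The Yoneda lemma then supplies, for every $A$-module $M$, a natural $k$-linear bijection
$$Hom_{A-\mathcal{M}od}(A_G,M)\;\xrightarrow{\;\sim\;}\;M(G),\qquad \eta\longmapsto \eta_G(Id_G),$$
whose inverse sends $m\in M(G)$ to the morphism $\eta_m\colon A_G\to M$ whose component at a finite group $H$ is the map $\alpha\mapsto \alpha\cdot m$, with $\alpha\cdot m$ denoting the action of $\alpha\in\mathcal{P}_A(G,H)$ on $m$ via the $\mathcal{P}_A$-module structure of $M$.

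Next, I would fix a set $[grp]$ of representatives of the isomorphism classes of finite groups and form
$$P_M\;=\;\bigoplus_{G\in[grp]}\;\bigoplus_{m\in M(G)}A_G,$$
which is projective as a direct sum of projective $A$-modules. The family $\{\eta_m\}_{G\in[grp],\,m\in M(G)}$ assembles into a single morphism $\pi\colon P_M\to M$.

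Finally, I would check that $\pi_H$ is surjective for every finite group $H$. If $H\in[grp]$ this is immediate: the summand $A_H$ indexed by $m\in M(H)$ sends $Id_H\in A_H(H)$ to $m$. For an arbitrary $H$, pick $G\in[grp]$ together with an isomorphism $\psi\colon H\to G$; any $n\in M(H)$ is of the form $M(Iso(\psi^{-1}))(m)$ for a unique $m\in M(G)$, since $M(Iso(\psi))$ and $M(Iso(\psi^{-1}))$ are mutually inverse, and this element is hit by $\pi_H$ applied to $Iso(\psi^{-1})\in A_G(H)$ inside the summand indexed by $m$. There is no substantial obstacle; the only step requiring a touch of care is this last isomorphism-shifting reduction, which is forced on us by indexing the sum over the set $[grp]$ rather than over the class of all finite groups.
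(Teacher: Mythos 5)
Your proposal is correct and follows essentially the same route as the paper: representables $A_G$ are projective by the Yoneda lemma, and the direct sum $\bigoplus_{G\in[grp]}\bigoplus_{m\in M(G)}A_G$ maps onto $M$ via the morphisms corresponding to the elements $m$, with surjectivity at an arbitrary $H$ obtained by transporting along an isomorphism $H\cong G\in[grp]$ (your $Iso(\psi^{-1})$ should strictly be read as its canonical image in $A(H\times G)$, which acts on $M$ as $M(Iso(\psi^{-1}))$). The only cosmetic difference is that the paper re-derives projectivity of $A_G$ from exactness of evaluation via a commutative diagram, whereas you quote it from the preceding discussion and add the remark that direct sums of projectives are projective.
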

\begin{proof}
Let $G$ be a finite group. We will prove first that the representable functor $A_G$ is a projective object in $A-\mathcal{M}od$. Let
$$0\longrightarrow N \longrightarrow M \longrightarrow T \longrightarrow 0$$
be an exact sequence in $A-\mathcal{M}od$. There is a commutative diagram in $k-Mod$
$$\xymatrix{
0\ar[r] &A-\mathcal{M}od(A_G,N)\ar[r]\ar[d] &A-\mathcal{M}od(A_G,M)\ar[r]\ar[d] &A-\mathcal{M}od(A_G,T)\ar[r]\ar[d] &0\\
0\ar[r] &N(G)\ar[r] &M(G)\ar[r] &T(G)\ar[r] &0
}$$
where the vertical arrows are the natural isomorphisms given by the Yoneda lemma, and the bottom row is exact since evaluation on $G$ is, thus the top row is exact too and the functor $A-\mathcal{M}od(A_G,\_)$ is exact, implying $A_G$ is a projective object in $A-\mathcal{M}od$.

Now let $M$ be an $A$-module. Then there is a morphism of $A$-modules
$$\psi: \bigoplus_{G \in [grp]} \bigoplus_{s \in M(G)} A_G \longrightarrow M$$
defined by
$$\psi_H: \bigoplus_{G \in [grp]} \bigoplus_{s \in M(G)} A(H\times G) \longrightarrow M(H)$$
$$(a_{G,s})  \mapsto \sum_{(G,s)} M(a_{G,s})(s)$$
for any group $H$. Since for any $H$ there is an element $G$ in $[grp]$ such that $H \cong G$, it follows that any $\psi_H$ is an epimorphism, and so is $\psi$.
\end{proof}

The following lemma for the shifted product will be useful in the following sections.

\begin{lemma}\thlabel{shiftCOMP}
Let $A$ be a Green biset functor and $G$ be a finite group. For any finite groups $K$ and $H$ finite groups and $a\in A(K)$ and $b\in A(H\times G)$, we have
$$(Inf^{K\times G}_Ka) \times^d b=a\times b.$$
\end{lemma}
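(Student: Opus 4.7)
The plan is to unfold the definition of the shifted product $\times^d$, use the Frobenius-style functoriality of $A$ to pull the inflation out of the product, and then reduce the statement to a biset isomorphism that exhibits the composite operation as the identity on $K\times H\times G$.

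First, by definition of $\times^d$, the left-hand side equals
$$A\bigl({}_{K\times H\times G^{\Delta}}K\times G\times H\times G_{K\times G\times H\times G}\bigr)\bigl((\mathrm{Inf}^{K\times G}_K a)\times b\bigr).$$
Denote the biset above by $D$. Next, the functoriality axiom for Green biset functors (item 3 in the definition) gives
$$(\mathrm{Inf}^{K\times G}_K a)\times b \;=\; A\bigl(\mathrm{Inf}^{K\times G}_K \times \mathrm{Id}_{H\times G}\bigr)(a\times b),$$
since $\mathrm{Id}_{H\times G}$ acts as the identity on $b$ and $\mathrm{Inf}^{K\times G}_K$ sends $a$ to the inflation. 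Since $A$ is a biset functor, $A(D)\circ A(\mathrm{Inf}^{K\times G}_K\times \mathrm{Id}_{H\times G})=A\bigl(D\circ (\mathrm{Inf}^{K\times G}_K\times \mathrm{Id}_{H\times G})\bigr)$, and the whole expression collapses to applying $A$ to a single $(K\times H\times G, K\times H\times G)$-biset.

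The crux of the argument is then to show that this composite biset is isomorphic to the identity biset on $K\times H\times G$. Writing $\mathrm{Inf}^{K\times G}_K$ as ${}_{(K\times G)^{\pi}} K_K$ with $\pi:K\times G\to K$ the projection, the composite is realized as the quotient of $D\times (K\times H\times G)$ by the equivalence $(dr,e)\sim(d,re)$ for $r\in K\times G\times H\times G$. The key observation is that the $G$-coordinate $g_1$ in the first $G$-slot of $D$ acts only on the $K$-component of $\mathrm{Inf}^{K\times G}_K\times \mathrm{Id}_{H\times G}$ through $\pi$, hence trivially; consequently the equivalence relation absorbs $g_1$ completely. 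Explicitly, the map
$$[(k', g_1, h', g_2),(k,h,g)] \;\longmapsto\; (k'k,\, h'h,\, g_2 g)$$
is a well-defined bijection, and a direct check shows it intertwines the left action of $K\times H\times G$ (which on $D$ goes through the diagonal $G^{\Delta}$) and the right action of $K\times H\times G$ with ordinary left and right multiplication on the identity biset.

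Putting these pieces together, $D\circ(\mathrm{Inf}^{K\times G}_K\times \mathrm{Id}_{H\times G})\cong \mathrm{Id}_{K\times H\times G}$, so applying $A$ yields the identity on $A(K\times H\times G)$, and the left-hand side equals $a\times b$. The main obstacle is the bookkeeping of the biset isomorphism in the step above: one has to keep track of four copies of groups acting in different ways and verify that the diagonal condition $G^{\Delta}$ on the left of $D$ is exactly what forces the two $G$-slots to collapse into the single $G$-coordinate of the result, while the inflation makes the spurious $g_1$-coordinate disappear.
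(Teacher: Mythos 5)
Your proof is correct and follows essentially the same route as the paper: unfold $\times^d$, use functoriality to rewrite $(\mathrm{Inf}^{K\times G}_K a)\times b$ as $A(\mathrm{Inf}^{K\times G}_K\times H\times G)(a\times b)$, and identify the composite biset with the identity biset on $K\times H\times G$ via the very same explicit bijection $[(k',g_1,h',g_2),(k,h,g)]\mapsto (k'k,h'h,g_2g)$ used in the paper. Nothing further is needed.
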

\begin{proof}
The application
$$(_{K\times H \times G^{\Delta}}K\times G\times H\times G_{K\times G\times H\times G})\circ (Inf^{K\times G}_K\times H\times G) \longrightarrow K\times H\times G$$
$$[(k_1,g_1,h_1,g_2),(k_2,h_2,g_3)]\mapsto (k_1k_2,h_1h_2,g_2g_3)$$
is a well-defined isomorphism of $(K\times H\times G,K\times H\times G)$-bisets. Then
\begin{align*}
(Inf^{K\times G}_Ka) \times^d b &=A(_{K\times H \times G^{\Delta}}K\times G\times H\times G_{K\times G\times H\times G})((Inf^{K\times G}_Ka) \times b)\\
&=A((_{K\times H \times G^{\Delta}}K\times G\times H\times G_{K\times G\times H\times G})\circ (Inf^{K\times G}_K \times H\times G))(a\times b)\\
& =A(K\times H\times G)(a\times b)\\
& =a\times b.
\end{align*}
\end{proof}

\section{$(\F,G)$-rhetorical biset functors.}

In this section $k$ and $\F$ are fields of characteristic zero. Let $\E/\F$ be an extension of fields and $H$ be a finite group. For a finitely generated $\F H$-module $V$, the notation $^{\E}V$ stands for the $\E H$-module obtained from $V$ by extension of scalars from $\F$ to $\E$. If $W$ is another finitely generated $\F H$-module, then we have isomorphisms of $\E H$-modules

\begin{itemize}
\item $^{\E}(V\oplus W)\cong {^{\E}V}\oplus {^{\E}W}$.
\item $^{\E}(V\otimes_{\F} W) \cong {^{\E}V}\otimes_{\E} {^{\E}W}$.
\item $^{\E} \F\cong \E$.
\end{itemize}

Hence we have a ring homomorphism
$$^{\E}\eta_H:R_{\F}(H)\longrightarrow R_{\E}(H)$$
sending the isomorphism class of the $\F H$-module $V$ to the class of $^{\E}V$, which can be extended to a $k$-algebra homomorphism
$$^{\E}\eta_H:kR_{\F}(H)\longrightarrow kR_{\E}(H)$$
that we denote the same way.

\begin{prop}\thlabel{PROP1}
Let $\E/\F$ be an extension of fields and $A$ and $B$ be $\F$-algebras. If $V$ is an $A$-module and $T$ is a $(B,A)$-bimodule, then we have
$$^{\E} (T \otimes_A V) \cong{^{\E}T}\otimes_{^{\E}A} {^{\E}V}$$
as $^{\E}B$-modules.
\end{prop}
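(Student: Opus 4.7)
The plan is to give an explicit ${}^{\E}B$-linear isomorphism by defining natural maps in both directions via the universal property of the tensor product. Conceptually, the statement combines associativity of the tensor product with change of rings, since
$$\E\otimes_{\F}(T\otimes_A V)\;\cong\;(\E\otimes_{\F}T)\otimes_A V\;\cong\;{}^{\E}T\otimes_{{}^{\E}A}\bigl({}^{\E}A\otimes_A V\bigr)\;\cong\;{}^{\E}T\otimes_{{}^{\E}A}{}^{\E}V,$$
and the real work is in writing down the bilinear maps so that all module structures are respected.

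First I would define $\varphi\colon {}^{\E}(T\otimes_A V)\to {}^{\E}T\otimes_{{}^{\E}A}{}^{\E}V$ starting from the $\F$-bilinear map $\E\times(T\otimes_A V)\to {}^{\E}T\otimes_{{}^{\E}A}{}^{\E}V$ that sends $(\lambda,\,t\otimes_A v)$ to $(\lambda\otimes t)\otimes_{{}^{\E}A}(1\otimes v)$. To see that this really descends from $T\otimes_{\F}V$ to $T\otimes_A V$, I would use that the right ${}^{\E}A$-action on ${}^{\E}T$ satisfies $(\lambda\otimes t)(1\otimes a)=\lambda\otimes ta$, so that
$$(\lambda\otimes ta)\otimes_{{}^{\E}A}(1\otimes v)=\bigl((\lambda\otimes t)(1\otimes a)\bigr)\otimes_{{}^{\E}A}(1\otimes v)=(\lambda\otimes t)\otimes_{{}^{\E}A}(1\otimes av),$$
which matches the relation $ta\otimes v=t\otimes av$ in $T\otimes_A V$.

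Next I would construct the inverse $\psi$ from the assignment ${}^{\E}T\times {}^{\E}V\to {}^{\E}(T\otimes_A V)$ sending $(\lambda\otimes t,\,\mu\otimes v)\mapsto \lambda\mu\otimes(t\otimes_A v)$ on pure tensors and extending $\F$-linearly. Balancing over a pure tensor $\nu\otimes a\in {}^{\E}A$ reduces once more to the identity $ta\otimes v=t\otimes av$ inside $T\otimes_A V$, and since ${}^{\E}A$ is $\F$-spanned by pure tensors this suffices. Direct computation on elementary tensors then gives $\varphi\circ\psi=\mathrm{id}$ and $\psi\circ\varphi=\mathrm{id}$, and ${}^{\E}B$-linearity of $\varphi$ is immediate because the left ${}^{\E}B$-action only modifies the first factor of each tensor product. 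The only delicate point is the well-definedness of $\psi$: one must check that the bilinear assignment on ${}^{\E}T\times{}^{\E}V$ is indeed balanced for the full action of ${}^{\E}A$, not only for elements of the form $1\otimes a$, which is handled by the spanning argument just noted.
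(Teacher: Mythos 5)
Your argument is correct, and it takes a more hands-on route than the paper. You build the isomorphism by exhibiting two explicit maps $\varphi$ and $\psi$ on generators and checking directly that they are mutually inverse and $^{\E}B$-linear, with the balancing identity $ta\otimes v=t\otimes av$ doing the work in both well-definedness checks. The paper instead never writes down the inverse: it shows that $^{\E}(T\otimes_A V)$, equipped with the canonical map $\gamma\colon(e_1\otimes t,e_2\otimes v)\mapsto(e_1e_2)\otimes(t\otimes v)$ (which is exactly the balanced map underlying your $\psi$), satisfies the universal property of the tensor product over $^{\E}A$: an arbitrary $^{\E}A$-balanced, $^{\E}B$-linear-in-the-first-variable map $\theta$ into any module $M$ is restricted to $T\times V$, factored through $T\otimes_A V$ by the universal property over $A$, and then extended uniquely to $^{\E}(T\otimes_A V)$ by scalar extension. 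The universal-property argument buys uniqueness and naturality without element chases, while your construction is more elementary and makes the isomorphism completely explicit; the conceptual chain $\E\otimes_{\F}(T\otimes_A V)\cong(\E\otimes_{\F}T)\otimes_A V\cong{^{\E}T}\otimes_{^{\E}A}({^{\E}A}\otimes_A V)$ you mention is a third legitimate route via associativity of tensor products. One small point of care: $\psi$ should not be described as ``defined on pure tensors and extended $\F$-linearly,'' since a map cannot simply be prescribed on pure-tensor pairs; the clean statement is that the $\F$-multilinear map $\E\times T\times\E\times V\to{^{\E}(T\otimes_A V)}$, $(\lambda,t,\mu,v)\mapsto\lambda\mu\otimes(t\otimes v)$, first induces a map on $^{\E}T\otimes_{\F}{^{\E}V}$, and then your spanning argument for $^{\E}A$-balancedness lets it descend to $^{\E}T\otimes_{^{\E}A}{^{\E}V}$. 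With that phrasing fixed, the proof is complete.
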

\begin{proof}
Let $M$ be an $^{\E}B$-module and $\theta:{^{\E}T}\times {^{\E}V}\longrightarrow M$ be an $^{\E}A$-balanced application which is $^{\E}B$-linear in the first variable. We can put $\theta$ into the following diagram
\[
\begin{tikzcd}
T\times V \arrow[d, "\otimes"'] \arrow[r,hookrightarrow] \arrow[rr,bend left,"\theta'"]  &
  ^{\E}T\times {^{\E}V} \arrow[d,"\gamma"'] \arrow[r,"\theta"]  &
  M  \\
T\otimes_A V \arrow[r,hookrightarrow] \arrow[rru,dashed, bend right=60, "\Theta'"']  & 
^{\E}(T\otimes_A V) \arrow[ru, dashed, "\Theta"']
\end{tikzcd}
\]
where: 
\begin{itemize}
\item $\theta'$ is the composite of the natural inclusion $T\times V \hookrightarrow {^{\E}T}\times{^{\E}V}$ with $\theta$, so $\theta'$ is an $A$-balanced application which is $B$-linear in the first variable.
\item $\Theta'$ is the unique homomorphism of $B$-modules such that $\theta' = \Theta'\circ \otimes$.
\item $\Theta$ is the unique homomorphism of $^{\E}B$-modules extending $\Theta'$ to $^{\E}(T\otimes_A V)$.
\item $\gamma$ is defined by the rule $(e_1 \otimes t, e_2 \otimes v)\mapsto (e_1 e_2)\otimes (t\otimes v)$ for $t \in T$, $v \in V$ and $e_1,e_2 \in \E$, which is easily checked to be a $^{\E}A$-balanced application which is $^{\E}B$-linear in the first variable.
\end{itemize}

It is easy to see that all the triangles and the square in this diagram commute, implying the uniqueness of $\Theta$ in the sense that $\theta = \Theta \circ \gamma $. Thus $^{\E}(T\otimes_A V) \cong{^{\E}T} \otimes_{^{\E}A} {^{\E}V}$.
\end{proof}

By \thref{PROP1} it follows that for any biset $_KX_H$ and any finitely generated $\F H$-module $V$, we have
$$^{\E}(\F X\otimes_{\F H} V) \cong \E X \otimes_{\E H} {^{\E}V}$$
as $\E K$-modules, thus the morphisms $^{\E}\eta_H$ define a morphism of biset functors
$$^{\E}\eta: kR_{\F}\longrightarrow kR_{\E}.$$

If $K$ is another finite group and $U$ is a finitely generated $\F K$-module, then 
$$^{\E}(V\otimes_{\F} U) \cong {^{\E}V}\otimes_{\E} {^{\E}U}$$ 
as $\E[H\times K]$-modules, and so $^{\E}\eta$ is a morphism of Green biset functors that we call the \textit{$\E$-extension}. By Curtis \& Reiner \cite[29.7]{CR2}, $^{\E}\eta$ is a monomorphism. Thus we have the following triangle
$$\xymatrix{
kB\ar[rr]^{\lambda}\ar[dr]_{\lambda} &&kR_{\E}\\
&\RF\ar[ur]_{^{\E}\eta}
}$$
which is easily checked to be commutative. In particular if $\F=\Q$, Artin's induction theorem implies that the arrow $\lambda: kB\longrightarrow \RQ$ is an epimorphism, and so the image of $\lambda: kB\longrightarrow kR_{\E}$ is isomorphic to $\RQ$. Thus, rhetorical biset functors are precisely the modules over $\RQ$. This motivates the following definition.

\begin{deff}
If $G$ is a finite group, an \textit{$(\F,G)$-rhetorical biset functor} is a module over the functor $\RFG$. An \textit{$\F$-rhetorical biset functor} is a module over the functor $\RF$. A \textit{$G$-rhetorical biset functor} is just a $(\Q,G)$-rhetorical biset functor.
\end{deff}

Since $kR_{\F,1}\cong \RF$ as Green biset functors, $\F$-rhetorical biset functors are precisely the $(\F,1)$-rhetorical biset functors, while $\Q$-rhetorical biset functors correspond to rhetorical biset functors.

\subsection{The lattice of ideals of $\RFG$.}

We recall some results on $\F$-characters over a finite group $G$. If $n$ is a positive integer and $\omega$ is a primitive $n$-th root of $1$, then the Galois group $Gal(\F(\omega)/\F)$ of the extension $\F(\omega)/\F$ can be embedded in $(\mathbb{Z}/n\mathbb{Z})^{\times}$ as a subgroup $F_n$ since any automorphism of $\F (\omega)$ fixing $\F$ is determined by $\omega \mapsto \omega^r$ for some $r$ relatively prime to $n$. 

Recall that for a finite group $G$, its \textit{exponent} is the smallest positive integer $n$ such that $g^n=1$ for any $g\in G$, and we often denote it by $e(G)$. By Curtis \& Reiner \cite[42.4]{CR2}, if $\chi$ is an $\F$-character of $G$, then $\chi(^xg)=\chi(g^i)$ for any $g,x\in G$ and any $[i]\in F_n$.

\begin{deff}
Let $n$ be the exponent of $G$. Two elements $g,h\in G$ are said to be \textit{$\F$-conjugate} (and we write $g\sim_{\F}h$) if there are $[i]\in F_n$ and $x\in G$ such that $^xg=h^i$.
\end{deff}

It follows that to be $\F$-conjugate defines an equivalence relation on $G$. We say that a function $\xi: G\longrightarrow \F$ is an \textit{$\F$-class function} on $G$ if its restrictions to $\F$-conjugacy classes are constant functions. The space of $\F$-class functions on $G$ is an $\F$-algebra of dimension equal to the number of $\F$-conjugacy classes on $G$. Then $\F$-characters are $\F$-class functions, and by Curtis \& Reiner \cite[9.20]{CR1}, the $\F$-characters afforded by simple $\F G$-modules (or \textit{irreducible $\F$-characters} for short) are linearly independent over $\F$. By Curtis \& Reiner \cite[42.8]{CR2}, the number of irreducible $\F$-characters is the same as the number of $\F$-conjugacy classes, hence they are a basis for the space of $\F$-class functions, that we identify with the algebra $\FR(G)$.

From part 2 of Bouc \cite[7.1.3]{SBb2}, for any finite groups $H$ and $G$, any finite biset $_HU_G$ and any $\F$-character $\chi$ of $G$, we have
\begin{equation}\label{E1}
\begin{gathered}
R_{\F}(U)(\chi)(h)=\frac{1}{|G|}\sum_{\substack{u\in U, g \in G\\
hu=ug}} \chi(g)
\end{gathered}
\end{equation}
for any $h\in H$. Equation \ref{E1} holds too for $\RF$ since $\RF(U)=Id_k\otimes R_{\F}(U)$.

\begin{lemma} \thlabel{projclass}
Let $N\unlhd G$ and $\pi:G\rightarrow G/N$ be the canonical projection. If $C$ is an $\F$-conjugacy class of $G$, then $\pi(C)$ is an $\F$-conjugacy class in $G/N$.
\end{lemma}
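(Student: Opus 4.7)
The plan is to compare the two $\F$-conjugacy relations through the canonical projection by exploiting the divisibility $m \mid n$, where $n = e(G)$ and $m = e(G/N)$. The key technical input is that reduction modulo $m$ restricts to a surjection $F_n \twoheadrightarrow F_m$. Indeed, if $\omega_n$ is a primitive $n$-th root of $1$ and $\omega_m := \omega_n^{n/m}$ is a primitive $m$-th root, then each automorphism $\omega_n \mapsto \omega_n^i$ in $Gal(\F(\omega_n)/\F)$ restricts to $\omega_m \mapsto \omega_m^i$ in $Gal(\F(\omega_m)/\F)$; this is precisely the standard (and surjective) restriction map of Galois groups, and under the embeddings of these groups into $(\Z/n\Z)^\times$ and $(\Z/m\Z)^\times$ it becomes reduction modulo $m$.

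With this in hand, the containment $\pi(C) \subseteq [\pi(g)]_{\F}$ (writing $[\pi(g)]_{\F}$ for the $\F$-conjugacy class of $\pi(g)$ in $G/N$, with $g\in C$ fixed) is immediate: if $h\in C$ with $\,^xg = h^i$ for some $x\in G$ and $[i]\in F_n$, then applying $\pi$ yields $\,^{\bar x}\bar g = \bar h^i$ with $[i \bmod m]\in F_m$, so $\bar g \sim_{\F} \bar h$ in $G/N$.

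For the reverse containment, let $\bar y\in G/N$ satisfy $\,^{\bar x}\bar g = \bar y^j$ for some $[j]\in F_m$ and $\bar x\in G/N$. Lift $[j]$ to some $[i]\in F_n$ using the surjectivity above, and choose any lift $x\in G$ of $\bar x$. Since $\bar y$ has order dividing $m$ and $i \equiv j \pmod m$, the equality becomes $\overline{\,^xg} = \bar y^i$. The element $z := \,^xg$ lies in $C$; and choosing $[i']\in F_n$ with $ii' \equiv 1 \pmod n$, the power $z^{i'}$ is still in $C$ (being $\F$-conjugate to $z$ via $x=1$ and $[i']\in F_n$), and $\overline{z^{i'}} = \bar y^{ii'} = \bar y$ by reducing $ii' \equiv 1$ modulo $m$. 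Hence $\bar y = \pi(z^{i'})\in \pi(C)$.

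The only subtle point in the plan is verifying the surjectivity of $F_n \twoheadrightarrow F_m$; everything else is bookkeeping with powers in $G$ and $G/N$ and with the groups $F_n$ and $F_m$, together with the fact, used implicitly throughout, that the defining conditions for $\F$-conjugacy are preserved under taking powers by elements of $F_n$ and under conjugation in $G$.
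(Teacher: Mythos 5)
Your proof is correct and follows essentially the same route as the paper: project the $\F$-conjugacy relation through $\pi$ for one containment, and for the other lift the Galois element to $F_n$ and take an inverse exponent to produce a preimage in $C$. The only difference is that you explicitly verify the surjectivity of the reduction $F_n \twoheadrightarrow F_m$ via restriction of Galois groups, a point the paper's argument uses only implicitly when it writes $[j]=[i]^{-1}$ in $F_n$ for an element $[i]$ of $F_t$.
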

\begin{proof}
The exponent $t$ of $G/N$ is a divisor of the exponent $n$ of $G$ so if $[i] \in F_n$ then $i$ is coprime to $t$. If $\omega$ is an $n$-th primitive root of $1$, then $\omega^{\frac{n}{t}}$ is a $t$-th primitive root of $1$ and $\omega^{\frac{n}{t}} \mapsto (\omega^{\frac{n}{t}})^i$ defines an element of $Gal(\F(\omega^{\frac{n}{t}})/\F)$ as it is the restriction of $\omega \mapsto \omega^i$, hence $[i]\in F_t$. If $g_0,g_1 \in C$, there exist $x\in G$ and $[i]\in F_n$ such that $^xg_0=g_1^i$, and so we have $^{xN}g_0N=(g_1N)^i$, thus $g_0N\sim_{\F}g_1N$. If we take any other element $gN$ which is $\F$-conjugate to $g_0N$, then there exist $xN \in G/N$ and $[i]\in F_t$ such that $(^xg_0)N=g^iN$, thus $^x(g_0^j)=gy$ for $[j]=[i]^{-1}$ in $F_n$ and some $y\in N$. Hence $gy \sim_{\F} g_0$ and $\pi(gy)=gN$.
\end{proof}

Let $C$ be an $\F$-conjugacy class on $G$. We define the $\F$-class function $e^G_C$ as
\[
e^G_C(g) = 
     \begin{cases}
       \text{1,} &\quad\text{if $g\in C$}\\
       \text{0,} &\quad\text{if $g\notin C$} \\ 
     \end{cases}
\]
for $g\in G$, which is a primitive idempotent of $\F R_{\F}(G)$. These idempotents form a full set of orthogonal primitive idempotents and they are also a basis for $\F R_{\F}(G)$. Now we will see how biset operations affect the primitive idempotents via the functor $\FRG$. For any $H$, let $\pi_2$ denote the natural projection from $H\times G$ to $G$.

\begin{lemma} \thlabel{EFFIDS1}
Let $H$ and $K$ be finite groups. If $C$ is an $\F$-conjugacy class of $H\times G$ and $\alpha \in \F B(K,H)$, we can write
\begin{equation}\label{E2}
\begin{gathered}
\FRG(\alpha)(e^{H\times G}_C)=\sum_{D\in c_{\F}(K\times G)}\lambda_D e^{K\times G}_D \in \FRG(K)
\end{gathered}
\end{equation}
for some $\lambda_D \in \F$. Then $\lambda_D\neq 0$ implies $\pi_2(D)=\pi_2(C)$.
\end{lemma}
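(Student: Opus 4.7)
The plan is to reduce to the case where $\alpha$ is a single $(K,H)$-biset by $\F$-linearity, then apply formula \ref{E1} to the biset $\alpha\times G$ regarded as a $(K\times G, H\times G)$-biset to evaluate the left-hand side of \ref{E2} pointwise and extract information about the support.

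First I would unpack the definitions: $\FRG(\alpha) = R_{\F}(\alpha\times G)$, where $\alpha\times G$ has action $(k,g_0)\cdot(u,g')\cdot(h,g) = (kuh,\, g_0 g' g)$. Given an $\F$-conjugacy class $C$ of $H\times G$ and the primitive idempotent $e^{H\times G}_C$, I evaluate $R_{\F}(\alpha\times G)(e^{H\times G}_C)$ at an arbitrary $(k,g_0)\in K\times G$ using \ref{E1}. The fixed-point condition $(k,g_0)(u,g') = (u,g')(h,g)$ splits into the two conditions $ku = uh$ in $\alpha$ and $g = (g')^{-1} g_0\, g'$ in $G$, and only pairs $(h,g)\in C$ contribute.

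Next, I would note that the coefficient $\lambda_D$ in the expansion \ref{E2} is constant on $D$ and equals the value of $\FRG(\alpha)(e^{H\times G}_C)$ at any $(k,g_0)\in D$. Therefore $\lambda_D\neq 0$ forces the existence of some $(h,g)\in C$ and $g'\in G$ with $g = (g')^{-1} g_0 g'$; in particular $g$ and $g_0$ are conjugate in $G$, hence $\F$-conjugate. Writing $\pi_2 \colon H\times G\to G$ as the canonical projection with kernel $H\times 1$, Lemma \ref{projclass} says $\pi_2(C)$ and $\pi_2(D)$ are $\F$-conjugacy classes of $G$. Since $g\in\pi_2(C)$ and $g_0\in\pi_2(D)$ satisfy $g\sim_{\F} g_0$, these two $\F$-conjugacy classes must coincide, i.e.\ $\pi_2(D) = \pi_2(C)$.

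The argument is essentially a bookkeeping exercise on the fixed-point formula \ref{E1}; the main subtlety is simply making sure that the second-coordinate condition $g = (g')^{-1}g_0 g'$ is read off correctly from the biset structure of $\alpha\times G$, and that one uses Lemma \ref{projclass} with the correct normal subgroup (namely $H\times 1 \unlhd H\times G$) so that the conclusion about $\F$-conjugacy classes in $G$ is applicable.
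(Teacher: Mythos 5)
Your proof is correct and follows essentially the same route as the paper: reduce to a single biset by linearity, apply the character formula \ref{E1} to the shifted biset viewed as a $(K\times G,H\times G)$-biset, observe that a nonzero value at an element of $D$ forces its $G$-component to be conjugate to the $G$-component of some element of $C$, and conclude using that $\pi_2(C)$ and $\pi_2(D)$ are $\F$-conjugacy classes. The only cosmetic difference is that you invoke \thref{projclass} (with $N=H\times 1$) explicitly, a step the paper leaves implicit.
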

\begin{proof}
Let $_KU_H$ be a biset. For any $(k,g)\in K\times G$, we have
$$\FRG(U)(e^{H\times G}_C)(k,g)=\FR(U\times G)(e^{H\times G}_C)(k,g)=\frac{1}{|H||G|}\sum_{\substack{(u,g_0)\in U\times G, (h,g_1) \in H\times G\\
(ku,gg_o)=(uh,g_0g_1)}} e^{H\times G}_C(h,g_1),$$
so if $\lambda_D\neq 0$ and $(k,g)\in D$, then $\FRG(U)(e^{H\times G}_C)(k,g)\neq 0$ and so there exist $(h,g_1)\in C$ and $(u,g_0)\in U\times G$ such that $(ku,gg_o)=(uh,g_0g_1)$. Then $g=\;^{g_0}g_1$, thus $\pi_2(D)=\pi_2(C)$.
\end{proof}

Let $G$ be a finite group, $n$ be its exponent and $\omega$ be an $n$-th primitive root of $1$. Since the map $Gal(\F[\omega]/\F)\longrightarrow Gal(\Q[\omega]/\F\cap \Q[\omega])$ defined by $\sigma\mapsto \sigma|_{\Q[\omega]}$ for $\sigma \in Gal(\F[\omega]/\F)$ is an isomorphism, both Galois groups are identified with the same subgroup $F_n$ of $(\Z/n\Z)^{\times}$, hence the number of $\F$-conjugacy classes and the number $\F\cap \Q[\omega]$-conjugacy classes are the same, and $R_{\F}(G)$ and $R_{\F\cap \Q[\omega]}(G)$ have the same rank. Thus 
$$^{\F}\eta_G:(\F\cap \Q[\omega])R_{\F\cap \Q[\omega]}(G) \longrightarrow (\F\cap \Q[\omega])R_{\F}(G)$$ 
is an isomorphism, and since $(\F\cap \Q[\omega])R_{\F\cap \Q[\omega]}(G)$ is split semisimple, so is $(\F\cap \Q[\omega])R_{\F}(G)$. From Curtis \& Reiner \cite[29.21]{CR2}, any extension of $\F\cap \Q[\omega]$ is a splitting field for $(\F\cap \Q[\omega])R_{\F}(G)$, in particular the fields $k[\omega]$ and the algebraic closure $\overline{k}$ of $k$, and so the algebras $k[\omega]R_{\F}(G)$ and $\overline{k}R_{\F}(G)$ are split semisimple. By Curtis \& Reiner \cite[7.2]{CR1}, the $k$-algebra $\RF(G)$ is semisimple because $k[\omega]R_{\F}(G)$ is split semisimple, and any extension of $k[\omega]$ is a splitting field for it. 

Since $\RF(G)$ is commutative, its Wedderburn factors are finite field extensions of $k$, so any ideal of it is completely determined by a set of primitive idempotents. We will use this to give a characterization of the ideals of the functor $\RFG$.

Let $E/k$ be a Galois extension and $A$ be a finite dimensional $k$-algebra. If $\sigma \in Gal (E/k)$, it induces a ring automorphism
$$\sigma \otimes 1:\; ^EA \longrightarrow\; ^EA$$
and so it sends idempotents to idempotents. Then, if $M$ is an $^EA$-module, the $^EA$-module $^{\sigma}M$ is defined to be the restriction of scalars of $M$ via $\sigma\otimes 1$, that we call the \textit{conjugate of $M$ by $\sigma$}. Conjugation by $\sigma$ sends simple modules to simple modules, and $Gal(E/k)$ acts on a set $Irr(^EA)$ of representatives of simple $^EA$-modules. By Curtis \& Reiner \cite[7.17]{CR1}, if $^EA$ is a semisimple $E$-algebra, $S$ is a simple left $^EA$-module and $e_S$ is the central primitive idempotent of $^EA$ which acts as the identity on $S$, then $(\sigma\otimes 1)(e_S)$ is a central primitive idempotent which acts as the identity on $^{\sigma}S$, for any $\sigma \in Gal (E/k)$.

So we have an action of $Gal (E/k)$ on the set of central primitive idempotents of $^EA$, and we write $\sigma \cdot e$ instead of $(\sigma\otimes 1)(e)$ for $\sigma \in Gal(E/k)$ and $e$ a central primitive idempotent of $^EA$. By Curtis \& Reiner \cite[7.18]{CR2}, if $V$ is a simple left $A$-module, then $^EV$ is a semisimple $^EA$-module, and  it is a direct sum of conjugates of one simple $^EA$-module $S$. Moreover, all conjugates of $S$ appear in $^EV$ with the same multiplicity and if $e_V \in A$ is the central primitive idempotent acting as unity on $V$, then
$$e_V=\sum_{S_i}e_{S_i}$$
where $S_i$ runs over the conjugates of $S$ and $e_{S_i}$ is the central primitive idempotent in $^EA$ acting as the identity on $S_i$.

Let $n$ be the exponent of $G$ and $\omega$ be a primitive $n$-th root of 1. Since $k[\omega]$ is a Galois extension of $k$, any primitive idempotent of $\RF(G)$ can be written as the sum of the orbit of a primitive idempotent of $k[\omega]R_{\F}(G)$ under the action of $Gal(k[\omega]/k)$. We identify $Gal(k[\omega]/k)$ with a subgroup $K_n$ of $(\mathbb{Z}/n\mathbb{Z})^{\times}$ as we did previously for $F_n=Gal(\F[\omega]/\F)$.

If $g\in G$, the idempotent $e^G_g\in k[\omega]R_{\F[\omega]}(G)$ can be written as
$$e^G_g=\frac{1}{|C_G(g)|}\sum_{\chi\in Irr_{\F[\omega]}(G)}\chi(g^{-1}) \chi$$
in terms of the basis $Irr_{\F[\omega]}(G)$ of irreducible $\F[\omega]$-characters. For any $\F$-conjugacy class $C$ of $G$, the primitive idempotent $e^G_C$ of $\RF(G)$ is an idempotent in $k[\omega]R_{\F[\omega]}(G)$, so it is a sum of some $e^G_g$ without repetitions. Since $e^G_C(g)\neq 0$ if and only if $g\in C$, then
$$e^G_C=\sum_{g\in [G\backslash C]}e^G_g $$
in $k[\omega]R_{\F[\omega]}(G)$, where $[G\backslash C]$ is a set of representatives of conjugacy classes of elements in $C$.

\begin{lemma} \thlabel{Knaction}
Let $G$ be a finite group and $n$ be its exponent. If $C$ is an $\F$-conjugacy class in $G$ and $j$ is an integer relatively prime to $n$, then $C^j=\{x^j| x\in C\}$ is also an $\F$-conjugacy class in $G$.
\end{lemma}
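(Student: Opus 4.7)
The plan is to exhibit the map ``raise to the $j$-th power'' as a bijection of $G$ that respects both ingredients of the $\F$-conjugacy relation, and then deduce that it must permute the $\F$-conjugacy classes; applying this to $C$ gives $C^j$.

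First I would introduce $\phi_j:G\longrightarrow G$ defined by $\phi_j(g)=g^j$. Because the exponent of $G$ is $n$ and $\gcd(j,n)=1$, one can pick $j'\in\Z$ with $jj'\equiv 1\pmod n$; then $\phi_j\circ\phi_{j'}=\phi_{j'}\circ\phi_j=\mathrm{id}_G$, so $\phi_j$ is a bijection of the underlying set of $G$ (it is \emph{not} a group homomorphism unless $G$ is abelian, but that is irrelevant here).

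Next I would check that $\phi_j$ preserves the $\F$-conjugacy relation. Two simple computations suffice: for every $x,g\in G$,
\begin{equation*}
\phi_j({^x}g)=(xgx^{-1})^j=xg^jx^{-1}={^x}\phi_j(g),
\end{equation*}
and for every $i\in\Z$,
\begin{equation*}
\phi_j(g^i)=g^{ji}=(g^j)^i=\phi_j(g)^i.
\end{equation*}
Thus if $g\sim_{\F}h$, witnessed by $x\in G$ and $[i]\in F_n$ with ${^x}g=h^i$, then ${^x}\phi_j(g)=\phi_j({^x}g)=\phi_j(h^i)=\phi_j(h)^i$, so $\phi_j(g)\sim_{\F}\phi_j(h)$. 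Hence $\phi_j$ sends any $\F$-conjugacy class into a single $\F$-conjugacy class. In particular $C^j=\phi_j(C)$ is contained in the $\F$-conjugacy class $D$ of, say, $x^j$ for any fixed $x\in C$.

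Finally I would use the inverse map $\phi_{j'}$, which also preserves $\sim_{\F}$ by the same argument (note that $j'$ is automatically coprime to $n$). Applying $\phi_{j'}$ to the inclusion $\phi_j(C)\subseteq D$ yields $C=\phi_{j'}(\phi_j(C))\subseteq\phi_{j'}(D)$, and $\phi_{j'}(D)$ is contained in a single $\F$-conjugacy class; since it contains the element $x$ of $C$, it is contained in $C$. Hence $\phi_{j'}(D)\subseteq C$, which gives $D\subseteq\phi_j(C)=C^j$ and therefore $C^j=D$, an $\F$-conjugacy class. No step presents a genuine obstacle; the only point to be careful about is that $[j]$ itself need \emph{not} belong to $F_n$, so the argument really does rely on the bijectivity of $\phi_j$ rather than on $[j]$ acting through the Galois group.
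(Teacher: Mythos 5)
Your proof is correct and rests on the same core idea as the paper's: since $j$ is invertible modulo $n$, powering by $j$ (and by $j^{-1}$) is compatible with conjugation and with the exponents coming from $F_n$, which forces $C^j$ to fill out a whole $\F$-conjugacy class. The paper carries out exactly this computation elementwise (taking $z\in D$, writing $^g x^j=z^i$ and raising to the power $t=j^{-1}$), while you package it as the statement that $\phi_j:g\mapsto g^j$ is a bijection preserving $\sim_{\F}$ with inverse $\phi_{j'}$; the substance is the same.
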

\begin{proof}
It is easy to see that $C^j$ is contained in some $\F$-conjugacy class $D$. On the other hand, if $z \in D$ and $x\in C$, then there exist $[i]\in F_n$ and $g\in G$ such that $^gx^j=z^i$, so $^gx=(z^t)^i$ for $[t]=[j]^{-1} \in (\mathbb{Z}/n\mathbb{Z})^{\times}$, so $z^t\in C$ and $z\in C^j$. 
\end{proof}

This lemma gives an action of $(\Z/n\Z)^{\times}$ on $c_{\F}(G)$, and by restriction, an action of $K_n$. Let $\Omega(k,\F,G)$ be the orbit space of the action of $K_n$ on $c_{\F}(G)$, and for $C\in c_{\F}(G)$, let $\mathcal{O}(C)$ denote the orbit of $C$. 

\begin{lemma} \thlabel{ORBITIDS}
Let $C$ be an $\F$-conjugacy class in $G$ and $[j] \in K_n$. Then $[j]\cdot e^G_C=e^G_{C^j}$ in $k[\omega]R_{\F}(G)$.
\end{lemma}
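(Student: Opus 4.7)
The plan is to embed $k[\omega]R_{\F}(G)$ into the larger algebra $k[\omega]R_{\F[\omega]}(G)$ via the $k[\omega]$-linear extension of the monomorphism ${}^{\F[\omega]}\eta: R_{\F}(G) \hookrightarrow R_{\F[\omega]}(G)$. Since the Galois action $\sigma \otimes 1$ on each tensor product over $\Z$ is compatible with this inclusion, it suffices to verify the identity inside $k[\omega]R_{\F[\omega]}(G)$, where $e^G_C$ admits the convenient decomposition
$$e^G_C=\sum_{g\in[G\backslash C]} e^G_g$$
into idempotents supported on single $G$-conjugacy classes.

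The key step is to compute $(\sigma\otimes 1)(e^G_g)$ for $\sigma\in Gal(k[\omega]/k)$ corresponding to $[j]\in K_n$. Using the explicit formula
$$e^G_g=\frac{1}{|C_G(g)|}\sum_{\chi\in Irr_{\F[\omega]}(G)}\chi(g^{-1})\chi,$$
each value $\chi(g^{-1})$ is a sum of $n$-th roots of unity $\omega^a$ corresponding to the eigenvalues of $g^{-1}$ in the representation affording $\chi$. The automorphism $\sigma$ sends $\omega^a\mapsto\omega^{ja}$, which is the analogous eigenvalue for $g^{-j}$; hence $\sigma(\chi(g^{-1}))=\chi(g^{-j})$. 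Combining this with $|C_G(g^j)|=|C_G(g)|$ (which holds because $\gcd(j,|g|)=1$, so $g$ and $g^j$ generate the same cyclic subgroup) yields $(\sigma\otimes 1)(e^G_g)=e^G_{g^j}$.

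Applying this termwise gives $(\sigma\otimes 1)(e^G_C)=\sum_{g\in[G\backslash C]} e^G_{g^j}$. To identify the right-hand side with $e^G_{C^j}=\sum_{h\in[G\backslash C^j]}e^G_h$, I would check that $g\mapsto g^j$ induces a bijection $[G\backslash C]\to[G\backslash C^j]$: surjectivity is immediate from the definition of $C^j$, and injectivity follows because $xg_1x^{-1}=g_2$ is equivalent to $xg_1^jx^{-1}=g_2^j$ whenever $j$ is a unit modulo $n$, which then gives the desired equality.

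The main subtlety lies in correctly identifying where the nontrivial Galois action manifests. Viewed only inside $k[\omega]R_{\F}(G)$ in the $\F$-character basis, the action on $e^G_C$ can look deceptively inert; the usefulness of passing to $k[\omega]R_{\F[\omega]}(G)$ and expanding in terms of irreducible $\F[\omega]$-characters, whose values lie in $\Q[\omega]\subseteq k[\omega]$ and genuinely vary under Galois, is precisely what unlocks the computation. Once that move is made, the remainder is a routine character-theoretic manipulation.
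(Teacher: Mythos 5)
Your argument is correct and follows essentially the same route as the paper: pass to $k[\omega]R_{\F[\omega]}(G)$, decompose $e^G_C=\sum_{g\in[G\backslash C]}e^G_g$, apply the Galois automorphism to the character formula for $e^G_g$ using $\sigma(\chi(g^{-1}))=\chi((g^j)^{-1})$ and $|C_G(g)|=|C_G(g^j)|$, and recombine. You merely make explicit two points the paper leaves implicit (the root-of-unity justification of the action on character values and the bijection $[G\backslash C]\to[G\backslash C^j]$), so no changes are needed.
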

\begin{proof}
We have that $[j]$ induces automorphisms $[j]\otimes 1: k[\omega]R_{\F}(G) \longrightarrow k[\omega]R_{\F}(G)$ and $[j]\otimes 1: k[\omega]R_{\F[\omega]}(G) \longrightarrow k[\omega]R_{\F[\omega]}(G)$ where the second one extends the first one naturally. Then
\begin{align*}
[j] \cdot e_C &=\sum_{g\in [G\backslash C]}[j]\cdot e_g = \sum_{g\in [G\backslash C]}\frac{1}{|C_G(g)|}\sum_{\chi\in Irr_{\F[\omega]}(G)}[j](\chi(g^{-1})) \chi\\
&=\sum_{g\in [G\backslash C]}\frac{1}{|C_G(g^j)|}\sum_{\chi\in Irr_{\F[\omega]}(G)}\chi((g^j)^{-1}) \chi=\sum_{g\in [G\backslash C]}e_{g^j} =e_{C^j},
\end{align*}
where $C_G(g)=C_G(g^j)$ since $j$ is a unity modulo $n$.
\end{proof}

\begin{lemma} \thlabel{IDSKRF}
Let $e$ be a primitive idempotent of $\RF(G)$. Then there is an $\F$-conjungacy class $C$ of $G$ such that
$$e=\sum_{D\in \mathcal{O}(C)} e_D^G$$
where $\mathcal{O}(C)$ denotes the orbit of $C$ under the action of $K_n$.
\end{lemma}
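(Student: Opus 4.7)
The plan is to leverage Galois descent from $k[\omega]$ down to $k$: first identify the primitive idempotents of the split semisimple algebra $k[\omega]R_\F(G)$, then group them into $K_n$-orbits to recover the primitive idempotents of $\RF(G)$.

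First, I would verify that the set $\{e_C^G : C \in c_\F(G)\}$ is a complete system of primitive orthogonal idempotents of $k[\omega]R_\F(G)$. Via the isomorphism $^\F\eta_G$ and the coincidence of $\F$-conjugacy with $(\F \cap \Q[\omega])$-conjugacy (both Galois groups are $F_n$), the algebra $(\F \cap \Q[\omega])R_\F(G)$ is split semisimple of dimension $|c_\F(G)|$, and the $e_C^G$ are exactly its primitive idempotents (they are $|c_\F(G)|$ orthogonal idempotents summing to $1$ that lie in this subalgebra of $\F R_\F(G)$, and there is nothing finer they could split into over $\F \cap \Q[\omega]$ since this algebra is already split). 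Since $k[\omega]$ is an extension of $\F \cap \Q[\omega]$ and the algebra is already split over that field, extending scalars to $k[\omega]$ preserves the full system of primitive idempotents.

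Second, I would apply the Galois descent result recalled just before the lemma: because $\RF(G)$ is commutative semisimple and $k[\omega]/k$ is Galois with group $K_n$, every primitive idempotent of $\RF(G)$ is the sum of a $K_n$-orbit of primitive idempotents of $k[\omega]R_\F(G)$. Combined with \thref{ORBITIDS}, which shows $[j]\cdot e_C^G = e_{C^j}^G$, the $K_n$-orbit of $e_C^G$ is exactly $\{e_D^G : D \in \mathcal{O}(C)\}$. Summing over this orbit yields the desired expression $e = \sum_{D \in \mathcal{O}(C)} e_D^G$ for some $\F$-conjugacy class $C$.

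I expect the main obstacle to be the first step, namely interpreting the $e_C^G$ (originally defined as primitive idempotents of $\F R_\F(G)$) as elements of $k[\omega]R_\F(G)$ via the isomorphism $^\F\eta_G$, and confirming that they remain a full set of primitive idempotents after base change from $\F \cap \Q[\omega]$ to $k[\omega]$. Once this identification is in place, the conclusion follows mechanically from the Galois descent fact recalled in the preceding paragraphs together with \thref{ORBITIDS}.
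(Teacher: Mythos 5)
Your proposal is correct and follows essentially the same route as the paper: Galois descent along $k[\omega]/k$ with group $K_n$ (every primitive idempotent of $\RF(G)$ is the sum of a $K_n$-orbit of primitive idempotents of $k[\omega]R_{\F}(G)$) combined with \thref{ORBITIDS} to identify that orbit as $\{e^G_D : D\in\mathcal{O}(C)\}$. The only difference is that your first step spells out explicitly, via split semisimplicity of $(\F\cap\Q[\omega])R_{\F}(G)$ and base change, that the $e^G_C$ form the complete set of primitive idempotents of $k[\omega]R_{\F}(G)$, a point the paper leaves implicit in the discussion preceding the lemma.
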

\begin{proof}
Since $k[\omega]$ is a Galois extension of $k$, then $e$ can be written as the sum of the orbit of the primitive idempotent $e^G_C$ under the action of $K_n$. By \thref{ORBITIDS}, such orbit is precisely the set of idempotents $e^G_{C^i}$ with $[i]\in K_n$.
\end{proof}

From now, we write $e^G_{\mathcal{O}(C)}$ for the primitive idempotent $\sum_{D\in \mathcal{O}(C)} e_D^G$ of $\RF(G)$.

\begin{prop} \thlabel{MORPHISM}
Let $K$ and $H$ be finite groups, $C$ be an $\F$-conjugacy class of $H\times G$ and $\alpha\in kB(K, H)$. If $D$ is an $\F$-conjugacy class of $K\times G$ such that $0\neq e^{K\times G}_{\mathcal{O}(D)}\RFG(\alpha)(e^{H\times G}_{\mathcal{O}(C)})$, then $\mathcal{O}(\pi_2(D))=\mathcal{O}(\pi_2(C))$.
\end{prop}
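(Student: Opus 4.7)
The approach is to pass to a sufficiently large coefficient ring so that the individual primitive idempotents $e^{H\times G}_{C'}$ lie in the same algebra as $\RFG(U)(e^{H\times G}_{\mathcal{O}(C)})$, apply \thref{EFFIDS1} termwise, and finish using that the projection $\pi_2$ is compatible with the power map $X\mapsto X^j$ that defines the $K_n$-orbits.

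First, by $k$-linearity of $\RFG(-)$, writing $\alpha=\sum_i k_iU_i$ with each $U_i$ a genuine $(K,H)$-biset, non-vanishing of $e^{K\times G}_{\mathcal{O}(D)}\RFG(\alpha)(e^{H\times G}_{\mathcal{O}(C)})$ forces the same conclusion for at least one $U_i$, so I may assume $\alpha=U$ is a biset. Fix a field $F$ containing both $k$ and $\F$. Inside $FR_{\F}(H\times G)$ one has $e^{H\times G}_{\mathcal{O}(C)}=\sum_{C'\in\mathcal{O}(C)}e^{H\times G}_{C'}$, and $FR_{\F,G}(U)$ extends both $\RFG(U)$ and $\FRG(U)$. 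Applying \thref{EFFIDS1} to each summand yields
$$\RFG(U)(e^{H\times G}_{\mathcal{O}(C)})=\sum_{C'\in\mathcal{O}(C)}\sum_{D'\in c_{\F}(K\times G)}\lambda^{C'}_{D'}\,e^{K\times G}_{D'},$$
with $\lambda^{C'}_{D'}\neq 0$ forcing $\pi_2(D')=\pi_2(C')$. Multiplying on the left by $e^{K\times G}_{\mathcal{O}(D)}=\sum_{D''\in\mathcal{O}(D)}e^{K\times G}_{D''}$ and using orthogonality of the primitive idempotents in $FR_{\F}(K\times G)$ kills every term with $D'\notin\mathcal{O}(D)$. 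The hypothesis thus produces a pair $(C',D')$ with $C'\in\mathcal{O}(C)$, $D'\in\mathcal{O}(D)$ and $\pi_2(C')=\pi_2(D')$.

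To close, I check that $\pi_2(\mathcal{O}(C))=\mathcal{O}(\pi_2(C))$, and the same for $D$. Directly from the definition, $\pi_2(X^j)=\pi_2(X)^j$. Since $e(G)$ divides both $e(H\times G)$ and $e(K\times G)$, the restriction maps $K_{e(H\times G)}\twoheadrightarrow K_{e(G)}$ and $K_{e(K\times G)}\twoheadrightarrow K_{e(G)}$ are surjective (they are restrictions of Galois groups), so every $K_{e(G)}$-translate of $\pi_2(C)$ is of the form $\pi_2(C)^{j\bmod e(G)}=\pi_2(C^j)$ for some $[j]\in K_{e(H\times G)}$, giving the claimed equality; likewise for $D$. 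Therefore $\pi_2(C')\in\mathcal{O}(\pi_2(C))$ and $\pi_2(D')\in\mathcal{O}(\pi_2(D))$ coincide, so the two $K_{e(G)}$-orbits share an element and must be equal.

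The main obstacle I anticipate is the coefficient-ring bookkeeping: the individual idempotents $e^{H\times G}_{C'}$ live in $\F R_{\F}(H\times G)$ but not necessarily in $kR_{\F}(H\times G)$, so the computation must be carried out in a common extension before invoking \thref{EFFIDS1}. Once this is arranged, the rest reduces to orthogonality of primitive idempotents and the easy observation that $\pi_2$ commutes with the relevant power action.
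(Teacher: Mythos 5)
Your argument is correct and is essentially the paper's proof: both reduce to \thref{EFFIDS1} applied to the individual classes in $\mathcal{O}(C)$ and $\mathcal{O}(D)$ after passing to a larger coefficient field, and then transfer the equality $\pi_2(D')=\pi_2(C')$ to orbits via compatibility of $\pi_2$ with the power action; the paper just does the matching by evaluating the class functions at a point of $D^i$ over $\overline{k}$ rather than by orthogonality of the idempotents over a compositum of $k$ and $\F$. Your explicit treatment of the passage from $K_{e(H\times G)}$ and $K_{e(K\times G)}$ to $K_{e(G)}$ via restriction of Galois automorphisms is a welcome detail that the paper leaves implicit.
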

\begin{proof}
Let $s$ be the exponent of $K\times G$ and $t$ be the exponent of $H\times G$. If $0\neq e^{K\times G}_{\mathcal{O}(D)}\RFG(\alpha)(e^{H\times G}_{\mathcal{O}(C)})$, then there are $(k,g) \in D$ and $[i]\in K_s$ such that 
$$0\neq \RFG(\alpha)(e^{H\times G}_{\mathcal{O}(C)})(k^i,g^i)=\sum_{E\in \mathcal{O}(C)}\overline{k}R_{\F,G}(\alpha)(e^{H\times G}_E)(k^i,g^i)$$
so there is $[j]\in K_t$ such that $\overline{k}R_{\F,G}(\alpha)(e^{H\times G}_{C^j})(k^{i},g^{i})\neq 0$. By \thref{EFFIDS1}, $\pi_2(D^i)= \pi_2(C^j)$ and so $\mathcal{O}(\pi_2(D))=\mathcal{O}(C)$.
\end{proof}

If $I$ is an ideal of $\RFG$, we know that $I(H)$ is an ideal of $\RF(H\times G)$ for any $H$, so it is completely determined by a unique subset $\mathcal{E}(H)$ of $\Omega(k,\F,H\times G)$.

\begin{prop} \thlabel{SETIDEAL}
Let $I$ be an ideal of $\RFG$ and $\mathcal{E}_I=\{\mathcal{O}(C)\in \Omega(k,\F,G)|e^G_{\mathcal{O}(1\times C)} \in I(1)\}$. Then
$$\mathcal{E}(H)=\{\mathcal{O}(D)\in \Omega(k,\F,H\times G)|\mathcal{O}(\pi_2(D))\in \mathcal{E}_I\}$$
for any finite group $H$.
\end{prop}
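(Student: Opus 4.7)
My plan is to prove both inclusions, using the biset operations of inflation $Inf^{H\times G}_{1\times G}$ and deflation $Def^{H\times G}_{1\times G}$ together with the ideal structure of $I(H)$ in the commutative semisimple algebra $\RF(H\times G)$. By \thref{IDSKRF} the primitive idempotents of $\RF(H\times G)$ are exactly the $e^{H\times G}_{\mathcal{O}(D)}$ indexed by $\Omega(k,\F,H\times G)$, so $I(H)$ is the $k$-span of the primitive idempotents it contains and the set $\mathcal{E}(H)$ is well defined.

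For the inclusion $\supseteq$, I would suppose $e^{1\times G}_{\mathcal{O}(1\times \pi_2(D))}\in I(1)$ and apply $Inf^{H\times G}_{1\times G}$, which preserves $I$ as a biset operation. Computing directly on $\F$-class functions (inflation is pullback along $\pi_2\colon H\times G\to 1\times G$, and by \thref{projclass} an $\F$-class $D'$ of $H\times G$ is either contained in or disjoint from $H\times F$ for any $F\in c_{\F}(G)$) yields
$$Inf^{H\times G}_{1\times G}\bigl(e^{1\times G}_{\mathcal{O}(1\times \pi_2(D))}\bigr) = \sum_{\mathcal{O}(D')\,:\,\mathcal{O}(\pi_2(D'))=\mathcal{O}(\pi_2(D))} e^{H\times G}_{\mathcal{O}(D')} \ \in\ I(H).$$
Since $e^{H\times G}_{\mathcal{O}(D)}$ is one of the mutually orthogonal summands on the right, multiplying by it (which remains in $I(H)$ because it is a two-sided ideal of the algebra) isolates $e^{H\times G}_{\mathcal{O}(D)}\in I(H)$, so $\mathcal{O}(D)\in \mathcal{E}(H)$.

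For the inclusion $\subseteq$, I would suppose $e^{H\times G}_{\mathcal{O}(D)}\in I(H)$ and apply $Def^{H\times G}_{1\times G}$ to land in $I(1)$. By \thref{MORPHISM} the only primitive idempotents of $\RF(1\times G)$ that can appear in the image are those $e^{1\times G}_{\mathcal{O}(E)}$ with $\mathcal{O}(\pi_2(E))=\mathcal{O}(\pi_2(D))$, and since an $\F$-conjugacy class of $1\times G$ is determined by its projection to $G$, the only such orbit is $\mathcal{O}(1\times \pi_2(D))$; hence
$$Def^{H\times G}_{1\times G}\bigl(e^{H\times G}_{\mathcal{O}(D)}\bigr) = \lambda\, e^{1\times G}_{\mathcal{O}(1\times \pi_2(D))}$$
for some $\lambda\in k$. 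To finish I need $\lambda\neq 0$. Using equation (\ref{E1}) applied to the deflation biset and the closure of $D$ under $G$-conjugation on the second coordinate, a short computation yields for any $g_0\in \pi_2(D)$
$$Def^{H\times G}_{1\times G}\bigl(e^{H\times G}_D\bigr)(1,g_0)=\frac{1}{|H|}\,\#\{h\in H\,:\,(h,g_0)\in D\}>0,$$
while the remaining $D'\in \mathcal{O}(D)$ contribute non-negative values at $(1,g_0)$; thus the orbit sum $\sum_{D'\in \mathcal{O}(D)} Def(e^{H\times G}_{D'})(1,g_0)$ is strictly positive, forcing $\lambda\neq 0$. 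Since $k$ has characteristic zero, $e^{1\times G}_{\mathcal{O}(1\times \pi_2(D))}\in I(1)$, so $\mathcal{O}(\pi_2(D))\in \mathcal{E}_I$.

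The main obstacle is the non-vanishing of $\lambda$ in the second direction: it requires the explicit character-theoretic computation via (\ref{E1}) and the verification that the non-negative contributions from the orbit sum cannot cancel. All other steps reduce to formal manipulations of orthogonal primitive idempotents and the biset operations $Inf$ and $Def$ together with the compatibility of the $K_n$-action with projection on the second factor.
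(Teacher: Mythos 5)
Your proof is correct and takes essentially the same route as the paper: inflation and deflation between $H$ and the trivial group, \thref{MORPHISM} to control which primitive idempotents can survive, and the idempotent/ideal structure of the semisimple algebra $\RF(H\times G)$ — with your explicit positivity computation via equation (\ref{E1}) making the nonvanishing of $Def^{H\times G}_{1\times G}\bigl(e^{H\times G}_{\mathcal{O}(D)}\bigr)$ explicit where the paper leaves it implicit. One small imprecision: \thref{MORPHISM} only gives that this deflation lies in the Wedderburn factor $e^{1\times G}_{\mathcal{O}(1\times\pi_2(D))}\RF(1\times G)$, which is a field extension of $k$ and need not consist of $k$-multiples of the idempotent, so the step ``$=\lambda e^{1\times G}_{\mathcal{O}(1\times\pi_2(D))}$ with $\lambda\in k$'' is not justified as stated; this costs nothing, however, since a nonzero element of that field factor belonging to the ideal $I(1)$ already forces $e^{1\times G}_{\mathcal{O}(1\times\pi_2(D))}\in I(1)$.
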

\begin{proof}
If $\mathcal{O}(C)\in \mathcal{E}(H)$ is such that $e^{1\times G}_{\mathcal{O}(E)}Def^{H\times G}_{1\times G}e^{H\times G}_{\mathcal{O}(C)}\neq 0$, then by \thref{MORPHISM}, $\mathcal{O}(\pi_2(E))=\mathcal{O}(\pi_2(C))$ and $\mathcal{O}(E)=\mathcal{O}(1\times \pi_2(C))$, hence $\mathcal{O}(\pi_2(C))\in \mathcal{E}_I$. Conversely, if $\mathcal{O}(C) \in \Omega(k,\F,H\times G)$ is such that $\mathcal{O}(\pi_2(C))\in \mathcal{E}_I$, then 
$$(e^{H\times G}_{\mathcal{O}(C)}Inf^{H\times G}_Ge^{G}_{\mathcal{O}(\pi_2 (C))})(h,g)=e^G_{\mathcal{O}(\pi_2 (C))}(g)\neq 0$$ 
for any $(h,g)\in C$, thus $e^{H\times G}_{\mathcal{O}(C)}\in I(H)$ and $\mathcal{O}(C)\in \mathcal{E}(H)$.
\end{proof}

Let $\mathcal{E}\subset \Omega(k,\F,G)$ and define
$$I_{\mathcal{E}}(H):=
\sum_{\substack{\mathcal{O}(D) \in \Omega(k,\F,H\times G)\\
\mathcal{O}(\pi_2(D))\in \mathcal{E}}}
\RF(H\times G) e^{H\times G}_{\mathcal{O}(D)}$$
for any group $H$. So we have that $I_{\mathcal{E}}(H)$ is an ideal of $\RF(H\times G)$. Then for any $\alpha \in k B(K,H)$, $\RFG (\alpha)$ restricts to a $k$-linear transformation
$$I_{\mathcal{E}}(\alpha): I_{\mathcal{E}}(H)\longrightarrow I_{\mathcal{E}}(K)$$
as a consequence of \thref{MORPHISM}. So we have

\begin{prop} \thlabel{set2ideal}
Let $\mathcal{E} \subset \Omega(k,\F,G)$. Then the assignments
$$H\mapsto I_{\mathcal{E}}(H),$$
$$\alpha \mapsto I_{\mathcal{E}}(\alpha),$$
for any $H$ and $K$ finite groups and $\alpha \in kB(K,H)$, define an ideal $I_{\mathcal{E}}$ of $\RFG$.
\end{prop}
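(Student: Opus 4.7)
The plan is to verify the two defining conditions of an ideal of a Green biset functor: (i) each $I_{\mathcal{E}}(H)$ is a two-sided ideal of the $k$-algebra $\RFG(H)$, and (ii) the assignments $H\mapsto I_{\mathcal{E}}(H)$ and $\alpha\mapsto I_{\mathcal{E}}(\alpha)$ assemble into a biset subfunctor of $\RFG$. Together these make $I_{\mathcal{E}}$ into an ideal of $\RFG$ by the characterization of ideals given after \thref{AMOD}.

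Condition (i) is essentially by construction. The orbit-sum idempotents $e^{H\times G}_{\mathcal{O}(D)}$ form a complete set of orthogonal central primitive idempotents of the commutative semisimple $k$-algebra $\RFG(H)=\RF(H\times G)$ by the Wedderburn analysis preceding \thref{IDSKRF}. Hence each principal ideal $\RF(H\times G)e^{H\times G}_{\mathcal{O}(D)}$ is two-sided, the sum defining $I_{\mathcal{E}}(H)$ is actually a direct sum, and $I_{\mathcal{E}}(H)=\epsilon(H)\RFG(H)$, where $\epsilon(H)=\sum_{\mathcal{O}(D):\mathcal{O}(\pi_2(D))\in\mathcal{E}}e^{H\times G}_{\mathcal{O}(D)}$ is the resulting central idempotent.

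Condition (ii) reduces to showing that $\RFG(\alpha)$ sends $I_{\mathcal{E}}(H)$ into $I_{\mathcal{E}}(K)$ for every $\alpha\in kB(K,H)$; once this is granted, $I_{\mathcal{E}}(\alpha)$ is well-defined as a restriction of $\RFG(\alpha)$, and functoriality (preservation of identities and composition) is inherited from $\RFG$. By $k$-linearity and the direct sum decomposition of $I_{\mathcal{E}}(H)$, it suffices to treat an arbitrary element $a\in \RF(H\times G)e^{H\times G}_{\mathcal{O}(C)}$ with $\mathcal{O}(\pi_2(C))\in\mathcal{E}$. Since \thref{MORPHISM} controls only the primitive idempotent $a=e^{H\times G}_{\mathcal{O}(C)}$, the plan is to extend scalars to the splitting field $k[\omega]$: inside $k[\omega]R_{\F}(H\times G)$ the element $a$ expands as $\sum_{C'\in\mathcal{O}(C)}\mu_{C'}e^{H\times G}_{C'}$ with $\mu_{C'}\in k[\omega]$. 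Applying $k[\omega]$-linearity of $\RFG(\alpha)$ together with \thref{EFFIDS1} to each $e^{H\times G}_{C'}$ shows that $\RFG(\alpha)(a)$ is a $k[\omega]$-combination of primitive idempotents $e^{K\times G}_D$ with $\pi_2(D)\in\mathcal{O}(\pi_2(C))$. This subspace is the scalar extension to $k[\omega]$ of $\bigoplus_{\mathcal{O}(D):\mathcal{O}(\pi_2(D))=\mathcal{O}(\pi_2(C))}\RF(K\times G)e^{K\times G}_{\mathcal{O}(D)}\subseteq I_{\mathcal{E}}(K)$; intersecting with $\RF(K\times G)$ then yields $\RFG(\alpha)(a)\in I_{\mathcal{E}}(K)$.

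The main technical obstacle is precisely this extension of \thref{MORPHISM} from the orbit-sum idempotents to arbitrary elements of a Wedderburn factor. The resolution is to pass to the split form $k[\omega]R_{\F}(H\times G)$, in which the $\F$-primitive idempotents $e^{H\times G}_D$ form a $k[\omega]$-basis, and apply \thref{EFFIDS1} termwise as described above; the fact that $\pi_2$ commutes with the $K_n$-action then guarantees that the relevant support condition is preserved at the level of $K_n$-orbits.
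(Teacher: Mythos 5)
Your proof is correct and follows essentially the same route as the paper, which likewise checks that each $I_{\mathcal{E}}(H)$ is an ideal of $\RFG(H)$ and that $\RFG(\alpha)$ restricts to a map $I_{\mathcal{E}}(H)\to I_{\mathcal{E}}(K)$, citing \thref{MORPHISM} for this stability step and then using the characterization of ideals given after \thref{AMOD}. Your additional care in upgrading \thref{MORPHISM} from the orbit idempotents $e^{H\times G}_{\mathcal{O}(C)}$ to arbitrary elements of the corresponding Wedderburn components—expanding over the splitting field $k[\omega]$ and applying \thref{EFFIDS1} termwise, compatibly with the Galois action—is a sound filling-in of a detail the paper leaves implicit, carried out with exactly the paper's own techniques.
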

\begin{proof}
By \thref{MORPHISM}, $I_{\mathcal{E}}$ is a biset subfunctor of $\RFG$ such that for any $H$, $I_{\mathcal{E}}(H)$ is an ideal of $\RFG(H)$, so $I_{\mathcal{E}}$ is an ideal of $\RFG$.
\end{proof}

Let $\mathfrak{I}_{k,\F,G}$ be the set of ideals of $\RFG$.

\begin{theorem} \thlabel{IDEALS}
Let $k$ and $\F$ be fields of characteristic $0$ and $G$ be a finite group. Then the function
$$\mathfrak{I}_{k,\F,G} \longrightarrow 2^{\Omega(k,\F,G)} : I \mapsto \mathcal{E}_I$$
is an isomorphism of lattices, with inverse given by $\mathcal{E}\mapsto I_{\mathcal{E}}$.
\end{theorem}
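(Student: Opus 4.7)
The plan is to show that the two given maps are mutually inverse order-preserving bijections; since both $(\mathfrak{I}_{k,\F,G},\subseteq)$ and $(2^{\Omega(k,\F,G)},\subseteq)$ are complete lattices and joins/meets are determined by the order, this will immediately yield a lattice isomorphism. Well-definedness of $\mathcal{E}\mapsto I_{\mathcal{E}}$ is \thref{set2ideal}, so only the verification of the two compositions and monotonicity remains.

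First, I would check that $I_{\mathcal{E}_I}=I$ for any $I\in \mathfrak{I}_{k,\F,G}$. Since $\RF(H\times G)$ is commutative and semisimple, any of its ideals is completely determined by the primitive idempotents it contains, and by \thref{IDSKRF} those primitive idempotents are exactly the elements $e^{H\times G}_{\mathcal{O}(D)}$ with $\mathcal{O}(D)\in \Omega(k,\F,H\times G)$. Letting $\mathcal{E}(H)$ denote the indexing set of primitive idempotents of $I(H)$, \thref{SETIDEAL} identifies
$$\mathcal{E}(H)=\{\mathcal{O}(D)\in \Omega(k,\F,H\times G)\mid \mathcal{O}(\pi_2(D))\in \mathcal{E}_I\},$$
which is precisely the set describing the primitive idempotents of $I_{\mathcal{E}_I}(H)$. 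Thus $I(H)=I_{\mathcal{E}_I}(H)$ at every finite group $H$, so $I=I_{\mathcal{E}_I}$.

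Second, I would check $\mathcal{E}_{I_{\mathcal{E}}}=\mathcal{E}$ for any $\mathcal{E}\subseteq \Omega(k,\F,G)$. The projection $\pi_2:1\times G\longrightarrow G$ is a group isomorphism, so it induces a bijection between $c_{\F}(1\times G)$ and $c_{\F}(G)$ intertwining the respective $K_n$-actions from \thref{Knaction}, hence a bijection of orbit spaces sending $\mathcal{O}(1\times C)$ to $\mathcal{O}(C)$. Under this identification, the definition of $I_{\mathcal{E}}(1)$ gives $e^G_{\mathcal{O}(1\times C)}\in I_{\mathcal{E}}(1)$ if and only if $\mathcal{O}(C)\in \mathcal{E}$, so $\mathcal{E}_{I_{\mathcal{E}}}=\mathcal{E}$. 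Monotonicity is immediate from the definitions: an inclusion $I\subseteq J$ of ideals restricts to $I(1)\subseteq J(1)$, whence $\mathcal{E}_I\subseteq \mathcal{E}_J$; in the other direction $\mathcal{E}\subseteq \mathcal{F}$ manifestly yields $I_{\mathcal{E}}(H)\subseteq I_{\mathcal{F}}(H)$ for every $H$.

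The main obstacle is really the behavior of biset operations on primitive idempotents needed to show that an ideal $I$ is determined by $\mathcal{E}_I$ (a subset of $\Omega(k,\F,G)$, i.e., the data at the trivial group only). That obstacle has already been cleared by the constraint of \thref{MORPHISM} on $\pi_2$-orbits, which feeds into \thref{SETIDEAL} and \thref{set2ideal}; with these in hand the present theorem is essentially a packaging statement, and the argument above is all that is required.
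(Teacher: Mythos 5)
Your proposal is correct and follows essentially the same route as the paper: the paper's proof of \thref{IDEALS} simply invokes \thref{SETIDEAL} and \thref{set2ideal} (together with the fact, noted just before \thref{SETIDEAL}, that an ideal of the commutative semisimple algebra $\RF(H\times G)$ is determined by the primitive idempotents it contains) to conclude that the two assignments are mutually inverse order isomorphisms. Your write-up merely makes explicit the two compositions, the identification of $\mathcal{O}(1\times C)$ with $\mathcal{O}(C)$ via $\pi_2$, and the monotonicity, which is a faithful elaboration of the paper's one-line argument.
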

\begin{proof}
By \thref{SETIDEAL} and \thref{set2ideal}, the applications just defined are mutual inverse isomorphisms of orders.
\end{proof}

If $\mathcal{E}=\{ \mathcal{O}(C) \}$, we write $I_{\mathcal{O}(C)}$ instead of $I_{\mathcal{E}}$. \thref{IDEALS} implies that $I_{\mathcal{O}(C)}$ is a simple $(\F,G)$-rhetorical biset functor. Furthermore, since $I_{\mathcal{O}(C)}(1)=e^{1\times G}_{\mathcal{O}(1\times C)}\RF(1\times G)\cong e^G_{\mathcal{O}(C)}\RF(G)$, it follows that $I_{\mathcal{O}(C)}\cong S_{1, e^{G}_{\mathcal{O}(C)}\RF (G)}$ by \thref{thNadia}.

\begin{cor}\thlabel{SS}
The functor $\RFG$ is a semisimple object of $\RFG- \mathcal{M}od$. Furthermore,
$$\RFG \cong \bigoplus_{\mathcal{O}(C) \in \Omega(k,\F,G)}I_{\mathcal{O}(C)}$$
as $(\F,G)$-rhetorical biset functors. In particular, $\RF$ is a simple $\F$-rhetorical biset functor.
\end{cor}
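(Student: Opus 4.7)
The plan is to deduce this essentially formally from Theorem~\thref{IDEALS} together with the Boolean structure of the power set $2^{\Omega(k,\F,G)}$, without returning to the Galois-theoretic or idempotent computations that drove the previous results.

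Under the lattice isomorphism $\mathfrak{I}_{k,\F,G}\cong 2^{\Omega(k,\F,G)}$ of \thref{IDEALS}, the whole set $\Omega(k,\F,G)$ corresponds to the ideal $\RFG$ itself and each singleton $\{\mathcal{O}(C)\}$ corresponds to the ideal $I_{\mathcal{O}(C)}$. Since unions in $2^{\Omega(k,\F,G)}$ correspond to sums of ideals and intersections to intersections of ideals, and $\Omega(k,\F,G)$ is the disjoint union of its singletons, one immediately obtains
$$\RFG=\sum_{\mathcal{O}(C)\in \Omega(k,\F,G)}I_{\mathcal{O}(C)},$$
and the sum is direct because pairwise disjoint singletons translate to pairwise trivially intersecting ideals.

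I would then identify the summands as simple objects. Since the singletons are the atoms of $2^{\Omega(k,\F,G)}$, each $I_{\mathcal{O}(C)}$ is a minimal nonzero ideal of $\RFG$; and as every sub-$\RFG$-module of $I_{\mathcal{O}(C)}$ is again an ideal of $\RFG$ contained in $I_{\mathcal{O}(C)}$, this minimality is equivalent to simplicity as an $(\F,G)$-rhetorical biset functor. Hence $\RFG$ is a direct sum of simple submodules, therefore semisimple in $\RFG-\mathcal{M}od$.

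For the final assertion, specialize to $G=1$: under the identification $kR_{\F,1}\cong \RF$, the trivial group has a unique $\F$-conjugacy class, so $\Omega(k,\F,1)$ is a singleton and the decomposition collapses to a single summand, forcing $\RF$ itself to be simple. The whole argument is essentially bookkeeping once \thref{IDEALS} is in place; the only point worth verifying is the translation between minimal nonzero ideals of $\RFG$ and simple sub-$\RFG$-modules of $\RFG$, which is immediate from the remark that any sub-$\RFG$-module of an ideal is itself an ideal.
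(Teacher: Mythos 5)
Your argument is essentially the paper's own: the paper likewise deduces the statement directly from \thref{IDEALS}, writing $\RFG = I_{\Omega(k,\F,G)} = \sum_{\mathcal{O}(C)\in\Omega(k,\F,G)} I_{\mathcal{O}(C)}$, and it records the simplicity of each $I_{\mathcal{O}(C)}$ as an immediate consequence of \thref{IDEALS} in the remark preceding the corollary; your justification of that step (every sub-$\RFG$-module of $\RFG$ is an ideal, since each $\RF(H\times G)$ is commutative, so atoms of the ideal lattice are simple submodules) is the correct way to make it explicit. The one imprecise point is your justification of directness: pairwise trivial intersections do not in general imply that a sum of more than two submodules is direct (three distinct lines through the origin of a plane intersect pairwise in $0$). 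The repair is immediate from the very lattice isomorphism you invoke: since joins correspond to unions and meets to intersections, $I_{\mathcal{O}(C)} \cap \sum_{\mathcal{O}(D)\neq\mathcal{O}(C)} I_{\mathcal{O}(D)} = I_{\{\mathcal{O}(C)\}\cap\left(\Omega(k,\F,G)\setminus\{\mathcal{O}(C)\}\right)} = I_{\emptyset} = 0$, which is the condition actually needed; the paper instead obtains directness from the observation that the $I_{\mathcal{O}(C)}$ are pairwise non-isomorphic simple modules. With that adjustment your proof is complete, including the specialization $G=1$ (where $\Omega(k,\F,1)$ is a singleton) giving the simplicity of $\RF$.
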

\begin{proof}
$$\RFG = I_{\Omega(k,\F,G)}=\sum_{\mathcal{O}(C)\in \Omega(k,\F,G)}I_{\mathcal{O}(C)} \cong \bigoplus_{\mathcal{O}(C) \in \Omega(k,\F,G)}I_{\mathcal{O}(C)}$$ 
since the $I_{\mathcal{O}(C)}$ are non-isomorphic simple modules.
\end{proof}

When $k=\F$, then $K_n=F_n$, so we have $\mathcal{O}(C)=\{C\}$ for any $C\in c_{\F}(G)$, and in this case we write $I_C$ instead of $I_{\mathcal{O}(C)}$. \thref{SS} proves then that $\FRG\cong \oplus_{C\in c_{\F}(G)} I_C$.

\subsection{On the essential algebras of $\RFG$.}

The Artin's induction theorem implies that the linearization morphism $\lambda : k B \longrightarrow \RQ$ is an epimorphism of biset functors which also a morphism of Green biset functors. Then via this morphism we see that simple $\RQ$-modules are precisely those simple biset functors annihilated by the kernel of $\lambda$. This implies the uniqueness of minimal groups for simple rhetorical biset functors, so they are parametrized by isomorphism classes of seeds $(H,V)$ where $H$ is such that $\ER(H)\neq 0$ and $V$ is a simple $k$Out$(H)$-module which is also a $\ER(H)$-module.

\begin{deff}
Let $n$ be a positive integer. A simple $k (\Z/n\Z)^{\times}$-module $V$ is said to be \textit{primitive} if anytime $d|n$ and $ ker\; \pi_{n,d}$ acts trivially on $V$, where $\pi_{n,d}:(\Z/n\Z)^{\times}\longrightarrow (\Z/d\Z)^{\times}$ is the canonical projection, then $d=n$. This is equivalent to say that if $\psi: (\Z/n\Z)^{\times}\longrightarrow GL_k(V)$ is the linear representation of $k(\Z/n\Z)^{\times}$ on $V$, then $ker\; \pi_{n,d} \subset ker\; \psi$ implies $d=n$.
\end{deff} 

Barker \cite[1.5]{LB} states that a pair $(H,V)$ is a seed for a simple rhetorical biset functor if and only if $H$ is a cyclic group and $V$ is a primitive $kOut(H)$-module. This can be seen in two steps as it is done in \cite{ROM1}: first, proving that $H$ is cyclic if $\ER(H)\neq 0$; then, proving that if $H$ is a cyclic group such that $\ER(H)\neq 0$, a simple $k$Out$(H)$-module $V$ is also a $\ER(H)$-module if and only if $V$ is primitive.

\begin{lemma}[Romero {{\cite[3.4]{ROM1}}}] \thlabel{LB1}
\begin{enumerate} 
\item Let $H$ be a non-trivial group. Then
$$I_{\RQ}(H) =
  \sum_{\substack{K \; cyclic\\
                  |K|\; proper\;divisor\; of\; |G|}}
\left\langle\mathcal{P}_{\RQ}(K,H) \circ \mathcal{P}_{\RQ}(H,K)\right\rangle .$$
\item If $H$ is non-cyclic, then $\ER(H)=0$.
\end{enumerate}
\end{lemma}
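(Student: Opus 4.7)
The plan is to derive both statements from a single Artin-type decomposition valid for any finite group $G$: inside $\mathcal{P}_{\RQ}(G,G)=\RQ(G\times G)$, one has
$$\mathrm{Id}_G \;=\; \sum_{C\leq G \text{ cyclic}} b_C\, \mathrm{Ind}^G_C \circ \mathrm{Res}^G_C$$
with rational coefficients $b_C$. With this identity in hand, part (2) is essentially immediate, and part (1) follows by inserting it into the middle of any composition factoring through a group of smaller order.

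To prove the identity, I would first unpack both sides in $\RQ(G\times G)$. On the one hand, $\mathrm{Id}_G=\RQ(\overrightarrow{G})(\epsilon)$ is the class of $\Q G$ regarded as a $(G,G)$-bimodule via $(g_1,g_2)\cdot g=g_1 g g_2^{-1}$; since the stabilizer of $1\in G$ is the diagonal $\Delta G$, this equals $[\mathrm{Ind}^{G\times G}_{\Delta G}\Q]$. On the other hand, unwinding the composition formula $\beta\circ\alpha=\RQ(G\times\overleftarrow{C}\times G)(\beta\times\alpha)$ on the basic biset operations shows that $\mathrm{Ind}^G_C\circ\mathrm{Res}^G_C$ corresponds to the $(G,G)$-biset $G\times_C G\cong(G\times G)/\Delta C$, hence equals $[\mathrm{Ind}^{G\times G}_{\Delta C}\Q]$. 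Now Artin's induction theorem, applied to $\Delta G\cong G$, furnishes rationals $b_C$ (indexed by cyclic $C\leq G$) with $[\Q_{\Delta G}]=\sum_C b_C\, \mathrm{Ind}^{\Delta G}_{\Delta C}\Q$ in $R_\Q(\Delta G)\otimes\Q$; applying $\mathrm{Ind}^{G\times G}_{\Delta G}$ on both sides and using transitivity of induction $\mathrm{Ind}^{G\times G}_{\Delta G}\circ\mathrm{Ind}^{\Delta G}_{\Delta C}=\mathrm{Ind}^{G\times G}_{\Delta C}$ lifts Artin's decomposition to the desired identity inside $\mathcal{P}_{\RQ}(G,G)$.

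For (2): if $H$ is non-cyclic, every cyclic subgroup $C$ of $H$ satisfies $|C|<|H|$, so the identity places $\mathrm{Id}_H$ inside $I_{\RQ}(H)$; as this is a two-sided ideal of $\mathrm{End}_{\mathcal{P}_{\RQ}}(H)$ containing the unit, it is the whole endomorphism algebra, whence $\ER(H)=0$. For (1), the inclusion $\supseteq$ is clear, and for $\subseteq$ I would insert the decomposition of $\mathrm{Id}_K$ into any $\beta\circ\alpha$ with $\alpha\in\mathcal{P}_{\RQ}(H,K)$, $\beta\in\mathcal{P}_{\RQ}(K,H)$ and $|K|<|H|$:
$$\beta\circ\alpha \;=\; \beta\circ\mathrm{Id}_K\circ\alpha \;=\; \sum_{C\leq K \text{ cyclic}} b_C\,(\beta\circ\mathrm{Ind}^K_C)\circ(\mathrm{Res}^K_C\circ\alpha),$$
expressing it as a sum over cyclic $C$ with $|C|\leq|K|<|H|$. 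The subtle point throughout is the lift of Artin's identity from $\RQ(G)$ to the genuinely larger object $\RQ(G\times G)$: a naive argument would only yield equality of the operators these elements induce on $\RQ(G)$, which is strictly weaker than equality inside $\mathcal{P}_{\RQ}(G,G)$. Routing the decomposition through the diagonal embedding $\Delta G\hookrightarrow G\times G$ and invoking transitivity of induction is what bridges this gap.
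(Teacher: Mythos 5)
Your diagonal-induction identity is correct, and it is a clean route to part (2): $\mathrm{Id}_G=[\mathrm{Ind}^{G\times G}_{\Delta G}\Q]$, while $\mathrm{Ind}^G_C\circ\mathrm{Res}^G_C$ is the class of $\Q[G\times_C G]\cong \mathrm{Ind}^{G\times G}_{\Delta C}\Q$, so Artin's theorem for $\Delta G$, pushed forward by $\mathrm{Ind}^{G\times G}_{\Delta G}$, does give $\mathrm{Id}_G=\sum_C b_C\,\mathrm{Ind}^G_C\circ\mathrm{Res}^G_C$ in $\mathcal{P}_{\RQ}(G,G)$, and for non-cyclic $H$ every cyclic subgroup is proper, so $\mathrm{Id}_H\in I_{\RQ}(H)$ and $\ER(H)=0$. (The paper itself does not prove this lemma; it cites Romero, whose method reappears in the proof of the shifted analogue \thref{R2}.)

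Part (1), however, is not fully proved. The statement (the ``$|G|$'' is a typo for ``$|H|$'') requires the sum to run over cyclic $K$ whose order is a \emph{proper divisor of} $|H|$. Inserting $\mathrm{Id}_K=\sum_C b_C\,\mathrm{Ind}^K_C\circ\mathrm{Res}^K_C$ into $\beta\circ\alpha$ only produces factorizations through cyclic subgroups $C\leq K$, and such $C$ satisfies $|C|\leq |K|<|H|$ with $|C|$ dividing $|K|$ --- but not necessarily dividing $|H|$. So your argument establishes the weaker equality in which the index condition is ``$K$ cyclic, $|K|<|H|$''. Inserting the decomposition of $\mathrm{Id}_H$ instead does not repair this in the remaining interesting case ($H$ cyclic), since the term $C=H$ just reproduces $\beta\circ\alpha$. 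The divisibility comes from a different mechanism, which is the missing ingredient: by Artin, every element of $\mathcal{P}_{\RQ}(K,H)=\RQ(H\times K)$ is a $k$-linear combination of classes of permutation modules $\Q[(H\times K)/D]$ with $D$ cyclic, and by the Goursat-type factorization (\thref{NR1} with trivial shifting group) each such biset factors through $L=p_1(D)/k_1(D)$, which is cyclic, a subquotient of $H$ (so $|L|$ divides $|H|$) and a subquotient of $K$ (so $|L|\leq |K|<|H|$), hence $|L|$ is a proper divisor of $|H|$. This refinement is not cosmetic: the paper later relies on exactly the divisor condition (in the proof of \thref{ESSALGH}, the fact that the $|K_i|$ divide $|H|$ is what yields $(|H\times K_i|,|G|)=1$). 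To close part (1) you need to add this Artin-plus-Goursat step; your identity alone does not suffice.
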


A similar statement to Part 2 of \thref{LB1} holds for $\RFG$, while a partial generalization of Part 1 holds for $\RQG$, depending on the exponent of $H$.

\begin{prop}[Romero {{\cite[3.7]{ROM1}}}] \thlabel{LB2}
Let $H$ be a group such that $\ER(H)\neq 0$ and $V$ be a simple $k$Out$(H)$-module. Then $V$ is a $\ER(H)$-module if and only if $V$ is primitive.
\end{prop}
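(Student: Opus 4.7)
To prove Proposition \thref{LB2}, combine $\ER(H)\neq 0$ with Lemma \thref{LB1}(2) to force $H$ to be cyclic of some order $n$; so $\mathrm{Out}(H)\cong (\Z/n\Z)^\times$. Each automorphism $\sigma\in\mathrm{Aut}(H)$ gives an isomorphism biset $\mathrm{Iso}(\sigma)\in \RQ(H\times H) = \mathrm{End}_{\mathcal{P}_{\RQ}}(H)$, and $k$-linear extension followed by passage to the quotient by $I_{\RQ}(H)$ yields a $k$-algebra homomorphism $\phi: k(\Z/n\Z)^\times \to \ER(H)$. The plan has three main ingredients: (i) $\phi$ is surjective; (ii) for each proper divisor $d$ of $n$, the element $\eta_d:=\sum_{\sigma\in\ker\pi_{n,d}}\sigma$ of $k(\Z/n\Z)^\times$, where $\pi_{n,d}:(\Z/n\Z)^\times\to(\Z/d\Z)^\times$ is the canonical projection, lies in $\ker\phi$; (iii) $\ker\phi$ is the two-sided ideal generated by these $\eta_d$. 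Given (i)--(iii), $\ER(H)\cong k(\Z/n\Z)^\times/\ker\phi$, so a simple $k(\Z/n\Z)^\times$-module $V$ underlies a $\ER(H)$-module exactly when every $\eta_d$ annihilates $V$; and for simple $V$, a standard character-theoretic computation shows $\eta_d$ annihilates $V$ if and only if $\ker\pi_{n,d}$ does not act trivially on $V$, which is precisely primitivity.

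For (ii), let $N_d$ be the unique subgroup of $H$ of index $d$, so $H/N_d\cong C_d$. The endomorphism
$$\varepsilon_d := \mathrm{Inf}^{H}_{H/N_d}\circ\mathrm{Def}^{H}_{H/N_d}\in\mathrm{End}_{\mathcal{P}_{\RQ}}(H)$$
lies in $I_{\RQ}(H)$ because it factors through $C_d$ with $|C_d|<|H|$. A character-level calculation using equation (\ref{E1}) shows that in $\mathrm{End}_{\mathcal{P}_{\RQ}}(H)$, the element $\varepsilon_d$ coincides with $|\ker\pi_{n,d}|$ times the sum of isomorphism classes $\mathrm{Iso}(\sigma)$ for $\sigma\in\ker\pi_{n,d}$; that is, the image of $\varepsilon_d$ in $\ER(H)$ equals $|\ker\pi_{n,d}|\cdot\phi(\eta_d)$. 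Since $\varepsilon_d\in I_{\RQ}(H)$ and characteristic zero ensures $|\ker\pi_{n,d}|$ is invertible in $k$, this yields $\phi(\eta_d)=0$. Consequently, if $V$ is a $\ER(H)$-module and $\ker\pi_{n,d}$ acted trivially on $V$, the operator $\eta_d$ would act both as zero (via $\phi$) and as $|\ker\pi_{n,d}|\cdot\mathrm{Id}_V\neq 0$, a contradiction; hence $V$ is primitive.

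For (i) and (iii) I would follow Romero's analysis in \cite{ROM1}: Lemma \thref{LB1}(1) describes $I_{\RQ}(H)$ as spanned by compositions in $\mathcal{P}_{\RQ}(C_d,H)\circ\mathcal{P}_{\RQ}(H,C_d)$ for proper divisors $d$ of $n$. By bilinearity and the fact that every transitive biset between cyclic groups factors as a composite of basic biset operations (inflations, deflations, restrictions, inductions, isomorphisms), each such composition is, up to pre- and post-composition with automorphisms of $H$, a scalar multiple of some $\varepsilon_d$. This simultaneously shows that $\mathrm{End}_{\mathcal{P}_{\RQ}}(H)$ is generated modulo $I_{\RQ}(H)$ by the images of the $\mathrm{Iso}(\sigma)$'s, giving surjectivity of $\phi$, and that $\ker\phi$ coincides with the two-sided ideal generated by the $\eta_d$. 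The main obstacle is the careful bookkeeping in this decomposition: one must track how each intermediate cyclic composition translates, via equation (\ref{E1}) and the identification $\mathrm{End}_{\mathcal{P}_{\RQ}}(H)=\RQ(H\times H)$, into specific $k$-linear combinations of elements of the form $\mathrm{Iso}(\sigma)\circ\varepsilon_d\circ\mathrm{Iso}(\tau)$; once this is done, (i)--(iii) follow and the proposition is complete.
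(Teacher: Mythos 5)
The paper does not actually prove this proposition; it is quoted verbatim from Romero \cite[3.7]{ROM1}, so your sketch has to stand on its own. Its skeleton is sensible: present $\ER(H)$ as a quotient of $k\mathrm{Out}(H)\cong k(\Z/n\Z)^{\times}$ via $\sigma\mapsto\widehat{\mathrm{Iso}(\sigma)}$, identify $\ker\phi$ with the ideal generated by the elements $\eta_d$, and translate annihilation of the $\eta_d$ into primitivity; that last character-theoretic step is fine, and surjectivity in (i) is indeed available (most cleanly from Artin induction: $\lambda\colon kB\to\RQ$ is surjective and $\widehat{kB}(H)\cong k\mathrm{Out}(H)$, so $\phi$ is onto). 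The problem is that the two steps carrying all the weight, (ii) and (iii), rest on claims that are false as stated.

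The identity you invoke in (ii) does not hold: in $End_{\PKQ}(H)=\RQ(H\times H)$ the element $\varepsilon_d=\mathrm{Inf}^{H}_{H/N_d}\circ\mathrm{Def}^{H}_{H/N_d}$ is not a scalar multiple of $\eta_d=\sum_{\sigma\in\ker\pi_{n,d}}\mathrm{Iso}(\sigma)$. As virtual characters of $H\times H$, $\varepsilon_d$ is supported on pairs $(h_1,h_2)$ with $h_1h_2^{-1}\in N_d$, while $\eta_d$ is supported on pairs with $h_1=\sigma(h_2)$; and even as operators on $\RQ(H)$ they differ, because every rational character of a cyclic group is fixed by every automorphism (Curtis--Reiner 42.4, with $F_n=(\Z/n\Z)^{\times}$ when $\F=\Q$), so $\eta_d$ acts as $|\ker\pi_{n,d}|\cdot\mathrm{Id}$, whereas $\varepsilon_d$ is the projection onto characters inflated from $H/N_d$ and kills every faithful irreducible $\Q$-character of the nontrivial group $H$. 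Concretely, for $H=C_4$, $d=2$: $\eta_2=\mathrm{Id}+\mathrm{Iso}(\sigma)$ with $\sigma\colon x\mapsto x^3$ acts as $2\cdot\mathrm{Id}$, while $\varepsilon_2$ annihilates the $2$-dimensional faithful rational character. So your derivation of $\phi(\eta_d)=0$ collapses; that membership, i.e. $\eta_d\in I_{\RQ}(H)$, is true but is precisely the hard content, and proving it requires exhibiting $\eta_d$ as a $k$-combination of morphisms factoring through proper subquotients (this is where Barker's and Romero's actual work lies). Step (iii) is likewise unsupported: the generators of $I_{\RQ}(H)$ from \thref{LB1} are not, up to composing with automorphisms, scalar multiples of the $\varepsilon_d$ --- for instance $\mathrm{Ind}^H_K\circ\mathrm{Res}^H_K$ for $K\leq H$ of order $d$ lies in $I_{\RQ}(H)$ but acts as multiplication by the permutation character $\Q[H/K]$ and is not of the form $\mathrm{Iso}(\sigma)\circ\varepsilon_{d'}\circ\mathrm{Iso}(\tau)$ --- and the inclusion $\ker\phi\subseteq\langle\eta_d\rangle$ (equivalently, that primitive modules genuinely survive in $\ER(H)$) is the substantive half of the proposition, for which no argument is given. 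As written, neither implication is established.
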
 

It follows that $\ER(H)\neq 0$ if and only if $H$ is a cyclic group for which there exist primitive $k$Out$(H)$-modules. Now we are going to determine all the cyclic groups for which the essential algebras do not vanish. It is a known result from number theory that if $n$ is a positive integer, any character $\theta : (\mathbb{Z}/n\mathbb{Z})^{\times}\longrightarrow \C^{\times}$ factors as $\theta=\theta'\circ \pi_{n,d}$ for a unique divisor $d$ of $n$ and a unique primitive character $\theta': (\mathbb{Z}/d\mathbb{Z})^{\times}\longrightarrow \C^{\times}$. So let $Prim(n)$ be the number of primitive characters of $(\mathbb{Z}/n\mathbb{Z})^{\times}$, then we have $\phi(n)=\sum_{d|n}Prim(d)$, where $\phi$ is the Euler function. Thus $Prim$ is a multiplicative arithmetic function, since $\phi$ is. Using the Möbius inversion, we have
$$Prim (p^a)=\sum_{i=1}^a \phi(p^i)\mu(p^{a-i})=\phi(p^a)-\phi(p^{a-1})$$
for any prime $p$ and any integer $a\geq 1$, where $\mu$ is the Möbius function. Now it follows easily that $Prim(p^a)=0$ if and only if $p=2$ and $a=1$. Since $Prim$ is multiplicative, $Prim(n)\neq 0$ if and only if $n\not\equiv 2\;mod\;4$.

\begin{lemma}
Let $H$ be a finite group. Then $\ER(H)\neq 0$ if and only if $H$ is cyclic and $|H|\not\equiv 2 \;mod\;4$.
\end{lemma}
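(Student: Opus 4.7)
The plan is to combine the observation stated in the text immediately after \thref{LB2} --- that $\ER(H) \neq 0$ if and only if $H$ is cyclic and primitive $k\mathrm{Out}(H)$-modules exist --- with the arithmetic computation of $\mathrm{Prim}(n)$ already carried out before the lemma.

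First, by \thref{LB1}(2), if $H$ is not cyclic then $\ER(H) = 0$, so I may assume $H$ is cyclic of order $n$, in which case $\mathrm{Out}(H) \cong (\Z/n\Z)^{\times}$. By the consequence of \thref{LB2} recorded in the text, it then suffices to show that a primitive simple $k(\Z/n\Z)^{\times}$-module exists if and only if $n \not\equiv 2 \pmod 4$.

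For this, I would translate primitivity of a $k(\Z/n\Z)^{\times}$-module into primitivity of its $\overline{k}$-characters. Let $V$ be a simple $k(\Z/n\Z)^{\times}$-module; since $(\Z/n\Z)^{\times}$ is abelian and $\mathrm{char}\,k = 0$, the module $\overline{k}\otimes_k V$ decomposes as a direct sum of one-dimensional submodules afforded by characters $\theta : (\Z/n\Z)^{\times} \longrightarrow \overline{k}^{\times}$ forming a single orbit under $\mathrm{Gal}(\overline{k}/k)$. Since Galois action on the target $\overline{k}^{\times}$ does not move the kernel of a character, the subgroup $\ker \pi_{n,d}$ acts trivially on $V$ if and only if $\ker \pi_{n,d} \subseteq \ker \theta$ for some (equivalently every) $\theta$ in this orbit. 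Hence $V$ is primitive if and only if these characters $\theta$ are primitive, and conversely any primitive character produces such a $V$ by taking the $k$-span of its Galois orbit.

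Finally, since both $\overline{k}$ and $\C$ contain a primitive $n$-th root of $1$, fixing compatible embeddings identifies the characters $(\Z/n\Z)^{\times} \longrightarrow \overline{k}^{\times}$ with the characters $(\Z/n\Z)^{\times} \longrightarrow \C^{\times}$, and primitivity depends only on the kernel. Thus a primitive $k\mathrm{Out}(H)$-module exists if and only if $\mathrm{Prim}(n) \neq 0$, and the multiplicative formula $\mathrm{Prim}(p^a) = \phi(p^a) - \phi(p^{a-1})$ already derived in the text gives $\mathrm{Prim}(n) \neq 0$ if and only if $n \not\equiv 2 \pmod 4$, which finishes the proof. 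The only step needing genuine care is the Galois-descent identification of primitivity of $V$ with primitivity of its $\overline{k}$-characters; everything else is either bookkeeping or a direct appeal to results already in the excerpt.
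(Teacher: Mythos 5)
Your proposal is correct and follows essentially the same route as the paper's own proof: reduction to cyclic $H$ via \thref{LB1}, the criterion recorded after \thref{LB2}, Galois descent from $k$ to $\overline{k}$ (which the paper phrases through $ker\,\psi=\cap_i ker\,\psi_i$ rather than through a single Galois orbit of characters), identification of $\overline{k}$-characters with $\C$-characters, and the computation of $Prim(n)$. The only cosmetic slip is that the relevant character values are roots of unity of order dividing the exponent of $(\Z/n\Z)^{\times}$ rather than $n$-th roots of unity, which does not affect the identification since both $\overline{k}$ and $\C$ are algebraically closed of characteristic zero.
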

\begin{proof}
By \thref{LB1}, $\ER(H)=0$ if $H$ is non-cyclic, so we can assume $H= \Z/n\Z$ for some positive integer $n$, and we identify Out$(\Z/n\Z)$ with $(\Z/n\Z)^{\times}$. If $k$ is algebraically closed, then any simple $k(\Z/n\Z)^{\times}$-module $V$ is one dimensional and its character $\chi_V:(\Z/n\Z)^{\times}\longrightarrow k^{\times}$ can be realized as a $\C$-character, so the result follows from the previous paragraph. For arbitrary $k$, if $E \slash k$ and $V$ is a simple $k(\Z/n\Z)^{\times}$-module, then $^EV \cong S_1 \oplus \cdots \oplus S_t$ as an $E(\Z/n\Z)^{\times}$-module, where each $S_i$ is a simple $E (\Z/n\Z)^{\times}$-module. Let $\psi: (\Z/n\Z)^{\times} \longrightarrow GL_k(V)$ and  $\psi_i:(\Z/n\Z)^{\times}\longrightarrow GL_E(S_i)$ be the linear representations of $k(\Z/n\Z)^{\times}$ on $V$ and $S_i$, respectively, then $ker \; \psi =\cap_i ker\; \psi_i$, and if $d|n$, then $ker\; \pi_{n,d} \subset ker\;\psi$ if and only if $ker\; \pi_{n,d} \subset ker\;\psi_i$ for any $i$. Hence if some $S_i$ is primitive, then so is $V$. Taking $E=\overline{k}$, it follows that $\ER(\Z/n\Z)\neq 0$ for any $n\not \equiv 2 \;mod\;4$, since any simple $E(\Z/n\Z)^{\times}$-module appears as a summand of the extension to $E$ of some simple $k(\Z/n\Z)^{\times}$-module. On the other hand, $\ER(\Z/n\Z)=0$ if $n\equiv 2\;mod\;4$ for $\pi_{n,n/2}$ is an isomorphism.
\end{proof}

For finite groups $H$ and $G$, let $\pi_1: H\times G\longrightarrow H$ and $\pi_2:H\times G\longrightarrow G$ be the canonical projections. If $D\leq H\times G$, let $p_1(D)=\pi_1(D)$, $p_2(D)=\pi_2(D)$, $k_1(D)=\pi_1(ker(\pi_2|_{D}))$ and $k_2(D)=\pi_2(ker(\pi_1|_D))$. Then $k_1(D)\unlhd p_1(D)\leq H$ and $k_2(D)\unlhd p_2(D)\leq G$, and by the Goursat's lemma, $p_1(D)/k_1(D)\cong p_2(D)/k_2(D)$. If $D\leq K\times H\times G$, $p_i(D)$, $k_i(D)$, $p_{i,j}(D)$ and $k_{i,j}(D)$ are defined in a similar way, for $i,j\in \{1,2,3\}$ such that $i< j$. 

\begin{lemma}[Romero {{\cite[4.8]{ROM1}}}]\thlabel{NR1}
Let K, H and G be finite groups. If $D\leq K \times H \times G$, $L_1=p_1(D)/k_1(D)$ and $L_2=p_2(D)/k_2(D)$,  then
\begin{enumerate}
\item There exist a $K\times L_1 \times G$-set $X$ and an $L_1 \times H\times G$-set $Y$ such that
$$(K\times H\times G)/D \cong X\circ Y$$
as $(K\times H\times G)$-sets, where the composition on the right-hand side is the shifted composition.
\item There exist a $K\times L_2 \times G$-set $Z$ and an $L_2 \times H\times G$-set $W$ such that
$$(K\times H\times G)/D \cong Z\circ W$$
as $(K\times H\times G)$-sets, where the composition on the right-hand side is the shifted composition.
\end{enumerate}
\end{lemma}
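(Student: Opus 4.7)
The plan is to factor the transitive biset $(K\times H\times G)/D$ by an application of Goursat's lemma followed by an explicit biset computation. Since Part (2) follows from Part (1) by the evident symmetry between $K$ and $H$ (viewing $D$ as a subgroup of $(K\times G)\times H$ instead of $K\times (H\times G)$), I focus on Part (1). First apply Goursat's lemma to $D$ regarded as a subgroup of $K\times (H\times G)$, obtaining subgroups $p_1(D)\leq K$ and $p_{2,3}(D)\leq H\times G$, normal subgroups $k_1(D)\unlhd p_1(D)$ and $k_{2,3}(D)\unlhd p_{2,3}(D)$, and a canonical group isomorphism $\delta_1\colon p_{2,3}(D)/k_{2,3}(D)\xrightarrow{\sim} L_1$ such that
$$D=\{(a,h,g)\in p_1(D)\times p_{2,3}(D)\colon q_1(a)=\delta_1(q_{2,3}(h,g))\},$$
where $q_1$ and $q_{2,3}$ denote the canonical projections.

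Next I construct the two candidate factors. Define
$$E_X:=\{(a,q_1(a),1)\colon a\in p_1(D)\}\leq K\times L_1\times G,$$
$$E_Y:=\{(\delta_1(q_{2,3}(h,g)),h,g)\colon (h,g)\in p_{2,3}(D)\}\leq L_1\times H\times G,$$
which are visibly subgroups since $q_1$ and $\delta_1\circ q_{2,3}$ are homomorphisms. Set $X:=(K\times L_1\times G)/E_X$ and $Y:=(L_1\times H\times G)/E_Y$; these are transitive bisets of the type required by the statement. In the degenerate case $G=1$, the groups $E_X$ and $E_Y$ specialize to the classical Bouc factorization of $(K\times H)/D$ through $L_1$, which is a useful sanity check.

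The final step is to exhibit an isomorphism $X\circ Y\cong (K\times H\times G)/D$, where $\circ$ is the shifted composition. I propose the map
$$\Phi\colon X\times Y\longrightarrow (K\times H\times G)/D,\quad ((k,l,g)E_X,(l',h',g')E_Y)\longmapsto (k\tilde a,h',g')D,$$
where $\tilde a\in p_1(D)$ is any lift of $ll'\in L_1$ along $q_1$; the ambiguity in $\tilde a$ is absorbed by $k_1(D)\subseteq p_1(D)$, so $\Phi$ is well-defined on cosets. The main obstacle will be the verification that $\Phi$ factors through the shifted composition equivalence on $X\times Y$: the shifted composition involves both the usual middle gluing (here along $L_1$) and a diagonal identification of the two $G$-factors carried by $X$ and $Y$, and one must carefully track both operations through the Goursat isomorphism $\delta_1$. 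The key observation making this work is that the $G$-coordinate appears trivially in $E_X$ and faithfully in $E_Y$, so the $G$-diagonalization in the composition corresponds exactly to the single $G$-coordinate in $(K\times H\times G)/D$. Once this is checked, $(K\times H\times G)$-equivariance of $\Phi$ is immediate, and a cardinality count using the Goursat identity $|D|=|p_1(D)|\cdot|k_{2,3}(D)|$ together with $|X\circ Y|=|X|\cdot|Y|/(|L_1|\cdot|G|)$ shows $\Phi$ is bijective.

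For Part (2) the same argument applies with the roles of $K$ and $H$ reversed: apply Goursat to $D\leq (K\times G)\times H$, obtain the Goursat isomorphism $\delta_2\colon p_{1,3}(D)/k_{1,3}(D)\xrightarrow{\sim} L_2$, and take
$$E_Z:=\{(k,\delta_2(q_{1,3}(k,g)),g)\colon (k,g)\in p_{1,3}(D)\},\quad E_W:=\{(q_2(h),h,1)\colon h\in p_2(D)\},$$
with $Z:=(K\times L_2\times G)/E_Z$ and $W:=(L_2\times H\times G)/E_W$; the analogous map $\Psi\colon Z\times W\to (K\times H\times G)/D$ then yields the required isomorphism $Z\circ W\cong (K\times H\times G)/D$.
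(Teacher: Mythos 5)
Your overall strategy---Goursat's lemma applied to $D\leq K\times(H\times G)$, two explicit transitive factors, and symmetry for part (2)---is the right one (the paper gives no proof, citing Romero \cite[4.8]{ROM1}, whose argument is of this type), but your choice of $E_X$ is wrong, and the error comes from a misreading of the shifted composition. The composition in $\mathcal{P}_{kB_G}$ does \emph{not} identify the two $G$-coordinates of $X\times Y$: unwinding the biset $_{K\times H\times G^{\Delta}}K\times\overleftarrow{L_1}\times H\times G\times G$, its right action is transitive with stabilizer the diagonal copy of $L_1$, so as a set $X\circ Y\cong(X\times Y)/\Delta L_1$, and the diagonal $G$ enters only by restricting the residual $K\times H\times G\times G$-action along $(k,h,g)\mapsto(k,h,g,g)$ --- a restriction of the action, not a quotient of the set. (Sanity check: for $K=H=1$ the composition must be the ring product of $kB_G(1)=kB(G)$, i.e.\ $[Z][W]=[Z\times W]$ with diagonal $G$-action, of cardinality $|Z||W|$, not $|Z||W|/|G|$.) Hence your formula $|X\circ Y|=|X|\,|Y|/(|L_1|\,|G|)$ is false; since the $\Delta L_1$-action is free here, $|X\circ Y|=|X|\,|Y|/|L_1|$, which with your $E_X=\{(a,q_1(a),1)\}$ and your $E_Y$ gives $|X\circ Y|=|K|\,|H|\,|G|^2/|D|=|G|\cdot|(K\times H\times G)/D|$ (using $|D|=|p_1(D)|\,|k_{2,3}(D)|$). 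Concretely, the $G$-coordinate of the $X$-factor is a coset invariant untouched by the gluing, so your $\Phi$ has fibres of size at least $|G|$ and cannot be an isomorphism; the same overcounting occurs in your part (2) via $E_W$.

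The repair is to let one factor absorb the whole shift: take $E_X=\{(a,q_1(a),g):a\in p_1(D),\,g\in G\}$ (so $X$ is the inflation along $K\times L_1\times G\to K\times L_1$ of the classical Bouc factor), keep your $E_Y$, and correspondingly $E_W=\{(q_2(h),h,g):h\in p_2(D),\,g\in G\}$ in part (2). Then $|X\circ Y|=|K|\,|H|\,|G|/|D|$, and the map $[(k,l,g)E_X,(l',h,g')E_Y]\mapsto(k\tilde a,h,g')D$ works, provided you also correct a second, smaller slip: under the paper's conventions the middle gluing identifies $(l,l')$ with $(\kappa l,\kappa l')$, so the invariant combination is $l^{-1}l'$, not $ll'$; take $\tilde a\in p_1(D)$ a lift of $l^{-1}l'$ (a lift of $ll'$ does not descend even for abelian $L_1$, since $(\kappa l)(\kappa l')=\kappa^2 ll'$). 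With these changes, well-definedness on the $E_Y$-coset reduces exactly to the Goursat membership criterion for $D$, equivariance (with $G$ acting diagonally) is immediate, and freeness of the $\Delta L_1$-action together with the corrected count gives bijectivity.
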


\begin{lemma} \thlabel{R2}
Let $G$ be a finite group. If $H$ is a finite group and either $e(G)|e(H)$ or $(|H|,|G|)=1$, then 
$$I_{\RQG} (H)=
 \sum_{\substack{K \; cyclic\\
                  |K|\; proper\; divisor\; of\; |H|}}
 \left\langle\mathcal{P}_{\RQG}(K,H) \circ \mathcal{P}_{\RQG}(H,K)\right\rangle.$$
\end{lemma}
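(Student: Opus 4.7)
The plan is to adapt the argument of \thref{LB1}(1) to the shifted setting, using \thref{NR1} in place of the two-group Goursat decomposition. The reverse inclusion $\supseteq$ is immediate, since a cyclic proper divisor $L$ of $|H|$ has $|L|<|H|$. For the forward inclusion, I take a typical generator $\alpha=\beta\circ\gamma$ of $I_{\RQG}(H)$, where $\gamma\in\mathcal{P}_{\RQG}(H,K)=\RQ(K\times H\times G)$, $\beta\in\mathcal{P}_{\RQG}(K,H)$, and $|K|<|H|$. Artin's induction theorem over $\Q$ lets me write every element of $\RQ(K\times H\times G)$ as a $k$-linear combination of classes of transitive permutation modules $[\Q[(K\times H\times G)/D]]$ for cyclic $D\leq K\times H\times G$, so I may assume $\gamma$ is one such class.

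For each cyclic $D$, \thref{NR1} provides two factorizations of $(K\times H\times G)/D$ as shifted compositions: through the cyclic subquotient $L_1(D)=p_1(D)/k_1(D)$ of $K$ (Part 1), or through $L_2(D)=p_2(D)/k_2(D)$ of $H$ (Part 2). Each gives $\gamma=X_D\circ Y_D$ in $\mathcal{P}_{\RQG}$ with intermediate object $L_i(D)$, so
$$\alpha=(\beta\circ X_D)\circ Y_D\in\mathcal{P}_{\RQG}(L_i(D),H)\circ\mathcal{P}_{\RQG}(H,L_i(D)),$$
which is of the required form provided $L_i(D)$ is cyclic with $|L_i(D)|$ a proper divisor of $|H|$. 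Choosing $i\in\{1,2\}$ according to which hypothesis holds is the heart of the proof.

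Writing $D=\langle(a,b,c)\rangle$ with orders $o_a,o_b,o_c$, a direct calculation gives
$$|L_1(D)|=\gcd(o_a,\operatorname{lcm}(o_b,o_c)),\qquad |L_2(D)|=\gcd(o_b,\operatorname{lcm}(o_a,o_c)).$$
Under $e(G)\mid e(H)$, I would choose $L_1(D)$: then $o_b\mid e(H)\mid|H|$ and $o_c\mid e(G)\mid e(H)\mid|H|$ force $\operatorname{lcm}(o_b,o_c)\mid|H|$, whence $|L_1(D)|\mid|H|$; and $|L_1(D)|\mid o_a\leq|K|<|H|$ gives the strict inequality. Under $(|H|,|G|)=1$, I would choose $L_2(D)$: the divisibility $|L_2(D)|\mid o_b\mid|H|$ is immediate, while the equality $|L_2(D)|=|H|$ would force $o_b=|H|$ and $|H|\mid\operatorname{lcm}(o_a,o_c)$; but $(|H|,o_c)\mid(|H|,|G|)=1$ places the $|H|$-part of $\operatorname{lcm}(o_a,o_c)$ entirely inside $o_a$, forcing $|H|\mid o_a\leq|K|<|H|$, a contradiction.

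The main obstacle is precisely this arithmetic case analysis: the two hypotheses are engineered so that at least one of $L_1(D),L_2(D)$ is a cyclic subquotient of order a proper divisor of $|H|$. Summing the resulting decompositions over all cyclic $D$ appearing in the Artin expansion of $\gamma$ then expresses $\alpha$ as an element of the right-hand side, completing the proof.
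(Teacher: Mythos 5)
Your proposal is correct and follows essentially the same route as the paper's proof: reduce via Artin induction to transitive modules $\Q[(K\times H\times G)/D]$ with $D$ cyclic, then use \thref{NR1} to factor through $L_1(D)$ when $e(G)\mid e(H)$ and through $L_2(D)$ when $(|H|,|G|)=1$, checking in each case that the intermediate cyclic group has order a proper divisor of $|H|$. Your explicit $\gcd$/$\operatorname{lcm}$ computation with a generator of $D$ is just an element-level rendering of the paper's divisibility argument via the Goursat subquotients of $K$, $H\times G$ and $K\times G$, and it reaches the same conclusions.
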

\begin{proof}
It is enough to prove that whenever $e(G)|e(H)$ or $(|H|,|G|)=1$, any module of the form
$$\Q[(H\times K \times G)/E] \circ \Q[(K\times H\times G)/D]$$
factors through a cyclic group whose order is a proper divisor of $|H|$, for $K$ a group of order smaller than the order of $H$ and $E\leq H\times K \times G$ and $D\leq K\times H\times G$ cyclic subgroups. By the associativity of the composition, it is enough to prove that this is so for either $\Q[(H\times K \times G)/E]$ or $\Q[(K\times H\times G)/D]$, where $K$ is a group of order smaller than $|H|$.

Suppose first that $e(G)|e(H)$ and let $D \leq K\times H\times G$ be a cyclic subgroup. Since $L_1=p_1(D)/k_1(D) \cong p_{2,3}(D)/k_{2,3}(D)$ is a cyclic subquotient of $H\times G$, then $|L_1|$ divides $e(H\times G)=e(H)$ and so it is a divisor of $|H|$. But $L_1$ is also a subquotient of $K$ and so $|L_1|$ is smaller than $|H|$. Thus $L_1$ is a cyclic group whose order is a proper divisor of $|H|$. By \thref{NR1}, $\Q[(K\times H\times G)/D] \cong \Q X\circ \Q Y$ for some $K\times L_1 \times G$-set $X$ and a $L_1\times H\times G$-set $Y$, which proves the assertion for this case.

Now let $(|H|,|G|)=1$ and again let $D \leq K\times H\times G$ be a cyclic subgroup. Then $L_2$ is a cyclic subquotient of $H$ and so $|L_2|$ divides $|H|$. But $L_2=p_2(D)/k_2(D)\cong p_{1,3}(D)/k_{1,3}(D)$ is a cyclic subquotient of $K\times G$ and so $|L_2|$ divides $|K\times G|=|K||G|$, hence $|L_2|$ divides $|K|$, which is smaller than $|H|$ and so $|L_2|$ is a proper divisor of $|H|$. By \thref{NR1}, $\Q[(K\times H\times G)/D] \cong \Q Z\circ \Q W$ for some $K\times L_2 \times G$-set $Z$ and a $L_2\times H\times G$-set $W$, so the result follows.
\end{proof}

Now we will introduce a $k$-algebra homomorphism from $\ER(H)\otimes_k \RQ(G)$ to $\ERFG(H)$.

\begin{lemma} \thlabel{lemmaESSALG}
Let $L$, $H$, $K$ and $G$ be finite groups. If $X$ is an $L\times K$-set, $Y$ is a $K\times H$-set and $Z$ and $W$ are $G$-sets, then 
$$(X\times Z)\circ (Y\times W) \cong (X\circ Y)\times (Z\times W)$$
as $L\times H \times G$-sets, where on the right-hand side of the isomorphism we have the composition in $\mathcal{P}_{kB}$ and $Z\times W$ is a $G$-set with the diagonal action, while on the left-hand side we have the shifted composition.  
\end{lemma}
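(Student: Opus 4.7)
The plan is to write both sides of the claimed isomorphism explicitly as $L\times H\times G$-sets and exhibit the natural bijection
$$\varphi \colon (X\times Z)\circ (Y\times W) \longrightarrow (X\circ Y)\times (Z\times W), \qquad [(x,z),(y,w)]\longmapsto \bigl([x,y],(z,w)\bigr).$$

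For the left-hand side, I would unfold the definition of the shifted composition: start with the $(L\times K\times G)\times(K\times H\times G)$-set $(X\times Z)\times(Y\times W)$ and apply the biset $_{L\times H\times G^{\Delta}}L\times\overleftarrow{K}\times H\times G\times G_{L\times K\times G\times K\times H\times G}$. The $\overleftarrow{K}$-factor imposes exactly the $K$-relations that define $X\times_K Y$ (namely $[(xk,z),(y,w)]=[(x,z),(ky,w)]$), while the $G^{\Delta}$-factor restricts the two independent $G$-actions to the diagonal. The resulting $L\times H\times G$-set consists of classes $[(x,z),(y,w)]$ with
$$(l,h,g)\cdot [(x,z),(y,w)] = [(lx, gz),(hy, gw)].$$

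For the right-hand side, $X\circ Y=X\times_K Y$ is the standard $L\times H$-set with $(l,h)\cdot[x,y]=[lx,hy]$, and $Z\times W$ is the $G$-set with diagonal action $g\cdot(z,w)=(gz,gw)$. Their external product in $\mathcal{P}_{kB}$ is the $L\times H\times G$-set with action
$$(l,h,g)\cdot \bigl([x,y],(z,w)\bigr) = \bigl([lx,hy],(gz,gw)\bigr).$$

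With the two descriptions above in hand, well-definedness of $\varphi$ is immediate because the $K$-relations on either side coincide, the inverse is the evident map $([x,y],(z,w))\mapsto [(x,z),(y,w)]$, and $L\times H\times G$-equivariance follows by term-by-term comparison of the two displayed action formulas. There is no real obstacle; the only thing to be careful with is bookkeeping the six factors $L, K, G, K, H, G$, making sure that the diagonal subgroup $G^{\Delta}$ arising in the shifted composition on the left matches exactly the diagonal $G$-action on $Z\times W$ on the right.
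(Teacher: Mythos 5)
Your proposal is correct and follows essentially the same route as the paper: the paper's proof consists precisely of exhibiting the map $[(x,z),(y,w)]\mapsto([x,y],(z,w))$ and asserting it is an isomorphism of $L\times H\times G$-sets, which is the map you construct. Your additional unfolding of the shifted composition (the $\overleftarrow{K}$-factor giving the $K$-relations and the $G^{\Delta}$-factor giving the diagonal action) is just the verification the paper leaves implicit.
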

\begin{proof}
The map $(X\times Z)\circ (Y\times W) \longrightarrow (X\circ Y)\times (Z\times W) :[(x,z),(y,w)] \mapsto ([x,y],(z,w))$ is an isomorphism of $L\times H \times G$-sets.
\end{proof}

Let $H$ be a finite group. We have a diagram
\begin{equation}\label{D1}
\begin{gathered}
\xymatrix{
End_{\mathcal{P}_{kB}}(H)\otimes_k kB(G) \ar[rr]^-{\mu'} \ar[d]_{\pi \otimes Id_{kB(G)}} &&End_{\mathcal{P}_{kB_G}}(H)\ar[d]^{\pi}\\
\widehat{kB}(H)\otimes_k kB(G) \ar[rr]_-{\nu'} &&\widehat{kB_G}(H)
}
\end{gathered}
\end{equation}
where:
\begin{itemize}
\item The top arrow $\mu'$ is the $k$-linear transformation induced by the product 
$$\times: kB(H\times H)\times kB(G)\longrightarrow kB(H\times H\times G)$$
of $kB$. Since $End_{\mathcal{P}_{kB}}(H)\otimes_k kB(G)$ is a unitary $k$-algebra with multiplication defined by the rule $(\alpha\otimes a)(\beta\otimes b)=(\alpha\circ \beta)\otimes (ab)$, then $\mu'$ is naturally a $k$-algebra homomorphism: by \thref{lemmaESSALG} 
\begin{align*}
\mu '((\alpha\otimes a)(\beta\otimes b)) & =\mu '((\alpha \circ \beta)\otimes (ab))=(\alpha \circ \beta)\times (ab)\\
& =(\alpha\times a)\circ (\beta\times b)=\mu '(\alpha\otimes a)\circ \mu '(\beta\otimes b)
\end{align*}
for all $\alpha,\beta\in End_{\mathcal{P}_{kB}}(H)$ and all $a,b\in kB(G)$. 
\item The $\pi$ appearing on the right vertical arrow is the natural projection
$$\pi:End_{\mathcal{P}_{kB_G}}(H)\longrightarrow \widehat{kB_G}(H).$$
\item The $\pi$ appearing on the left vertical arrow is the natural projection 
$$\pi:End_{\mathcal{P}_{kB}}(H)\longrightarrow \widehat{kB}(H)$$ 
and so $\pi\otimes Id_{kB(G)}$ is a surjective $k$-algebra homomorphism.
\item The bottom arrow $\nu'$ is defined by the rule 
$$\widehat{\alpha}\otimes a \mapsto \widehat{\alpha \times a}$$
for $\alpha\in End_{\mathcal{P}_{kB}}(H)$ and $a\in kB(G)$. This is well defined since for any $\alpha=\alpha_1\circ \alpha2$ and all $a\in kB(G)$, where $\alpha_1 \in \mathcal{P}_{kB}(L,H)$, $\alpha_2 \in \mathcal{P}_{kB}(H,L)$ and $|L|<|H|$, we have
$$\alpha\times a=(\alpha_1\times 1)\circ (\alpha_2\times a) \in I_{kB_G}(H)$$
by \thref{lemmaESSALG}. The same argument which shows that $\mu'$ is a $k$-algebra homomorphism shows that $\nu'$ is too.
\end{itemize}

Diagram \ref{D1} is easily checked to be commutative. Then linearizing all the arrows, and since linearization is a morphism of Green biset functors, we get a commutative diagram
\begin{equation} \label{D2}
\begin{gathered}
\xymatrix{
End_{\PKQ}(H)\otimes_k \RQ(G) \ar[rr]^-{\mu} \ar[d]_{\pi \otimes Id_{\RQ(G)}} &&End_{\PKQG}(H)\ar[d]^{\pi}\\
\ER(H)\otimes_k \RQ(G) \ar[rr]_-{\nu} &&\ERG(H)
}
\end{gathered}
\end{equation}
where all the arrows are homomorphisms of unitary $k$-algebras.

Let $\E/\F$ be an extension of fields of characteristic zero. Since the shift of a morphism of Green biset functors is again a morphism of Green biset functors, for any finite group $G$ we have a shifted $\E$-extension $^{\E}\eta_G: kR_{\F,G}\longrightarrow kR_{\E,G}$. Then $^{\E}\eta_G$ induces a homomorphism of associative $k$-algebras with unit $^{\E}\widehat{\eta}_{G,H}: \widehat{kR_{\F,G}}(H)\longrightarrow \widehat{kR_{\E,G}}(H)$ for any finite group $H$.

\begin{prop}\thlabel{R4}
Let $H$ be a finite group. If $\ERFG(H)\neq 0$, then $\ER(H)\neq 0$.
\end{prop}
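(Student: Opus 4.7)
My plan is to prove the contrapositive: assuming $\ER(H)=0$, I will show that $\ERFG(H)=0$. The engine of the argument is the standard fact that a unitary $k$-algebra homomorphism $\phi\colon A\to B$ with $A=0$ forces $B=0$, since $\phi(1_A)=1_B$ and $1_A=0$. So it suffices to exhibit a \emph{unitary} $k$-algebra homomorphism from some algebra that involves $\ER(H)$ as a tensor factor into $\ERFG(H)$.

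First I would invoke the bottom row of diagram \ref{D2}, which provides a unitary $k$-algebra homomorphism
\[
\nu\colon \ER(H)\otimes_k \RQ(G)\longrightarrow \ERG(H),
\]
defined by $\widehat{\alpha}\otimes a \mapsto \widehat{\alpha\times a}$. Next, since the shifted $\F$-extension $^{\F}\eta_G\colon \RQG\to\RFG$ is a morphism of Green biset functors (shifting preserves Green morphisms, and $^{\F}\eta$ itself is a morphism of Green biset functors), it induces the unitary $k$-algebra homomorphism
\[
{^{\F}\widehat{\eta}}_{G,H}\colon \ERG(H)\longrightarrow \ERFG(H)
\]
discussed in the text right before the statement.

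Composing these two gives a unitary $k$-algebra homomorphism
\[
\Phi={^{\F}\widehat{\eta}}_{G,H}\circ \nu\colon \ER(H)\otimes_k \RQ(G)\longrightarrow \ERFG(H).
\]
Now suppose $\ER(H)=0$. Then $\ER(H)\otimes_k \RQ(G)=0$, so $1=0$ in the domain of $\Phi$. Since $\Phi$ is unitary, $\Phi(1)=1$, and therefore $1=0$ in $\ERFG(H)$, forcing $\ERFG(H)=0$. This proves the contrapositive.

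There is no serious obstacle here, because all the hard work has already been set up: the construction of $\nu$ as a unitary $k$-algebra map was verified in diagram \ref{D2}, and the shifted $\F$-extension was produced just before the proposition. The only thing one needs to be slightly careful about is the verification that $\Phi$ is indeed unitary, but this is immediate from the fact that it is the composite of two unitary maps.
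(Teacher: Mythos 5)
Your proposal is correct and follows essentially the same route as the paper: the paper's proof also composes $\nu$ from diagram \ref{D2} with the shifted extension map $^{\F}\widehat{\eta}_{G,H}$ to get a unitary $k$-algebra homomorphism $\ER(H)\otimes_k \RQ(G)\to \ERFG(H)$, and concludes by the same "unit goes to unit" argument. Your write-up merely makes the contrapositive step explicit.
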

\begin{proof}
The composite
$$\xymatrix{
\ER(H)\otimes_k \RQ(H)\ar[r]^-{\nu} &\ERG(H)\ar[r]^{^{\F}\widehat{\eta}_{G,H}} &\ERFG(H)
}$$
is a homomorphism of unitary $k$-algebras. So if $\ERFG(H)\neq 0$, then $\ER(H)\neq 0$.
\end{proof}

\begin{cor} \thlabel{R6}
If $H$ is a minimal group for a simple $(\F,G)$-rhetorical biset functor, then $H$ is cyclic and $|H|\not \equiv 2\;mod\;4$. Furthermore, $H$ is unique up to group isomorphism.
\end{cor}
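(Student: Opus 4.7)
The plan is to combine the vanishing result \thref{R4} with the unnamed lemma immediately preceding \thref{R2}, which characterizes the cyclic groups $H$ for which $\ER(H)\neq 0$. First I would let $S$ be a simple $(\F,G)$-rhetorical biset functor and $H$ a minimal group for $S$. By the general discussion preceding \thref{thNadia}, $S(H)$ is naturally a non-zero simple module over the essential algebra $\ERFG(H)$; since this algebra is unital and acts non-trivially on a non-zero module, one must have $\ERFG(H)\neq 0$. Applying \thref{R4} then yields $\ER(H)\neq 0$, and reading off the characterization lemma forces $H$ to be cyclic with $|H|\not\equiv 2 \pmod{4}$, which is the first assertion.

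For uniqueness up to group isomorphism, the observation is almost immediate. By the very definition of a minimal group, any two minimal groups $H$ and $H'$ for $S$ have the same order, and the previous paragraph shows that both must be cyclic. Since finite cyclic groups of the same order are isomorphic, $H\cong H'$, and uniqueness follows.

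No serious obstacle arises in this corollary itself; the substantive work has already been packaged in \thref{R4}, whose proof goes through the $k$-algebra homomorphism $\nu$ constructed in Diagram \ref{D2} together with the induced homomorphism of essential algebras coming from the shifted $\F$-extension ${}^{\F}\widehat{\eta}_{G,H}$. Once those tools are in place, the corollary is purely a matter of assembling existing facts in the right order, and the cyclicity conclusion makes uniqueness a triviality.
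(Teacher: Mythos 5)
Your proposal is correct and follows essentially the same route as the paper: deduce $\ERFG(H)\neq 0$ from minimality, apply \thref{R4} to get $\ER(H)\neq 0$, invoke the characterization of groups with non-vanishing $\ER(H)$ (cyclic with order $\not\equiv 2 \bmod 4$), and conclude uniqueness since cyclic groups of equal order are isomorphic. If anything, your explicit appeal to the characterization lemma for the mod-4 condition is slightly more complete than the paper's citation of \thref{LB1} alone.
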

\begin{proof}
If $S$ is a simple $(\F,G)$-rhetorical biset functor and $H$ is a minimal group for $S$, then $0\neq S(H)$ is a simple $\ERFG(H)$-module, and so $\ERFG(H)\neq 0$. By \thref{R4}, $\ER(H)$ is non-zero, and by \thref{LB1}, $H$ is cyclic. Then any other minimal group for $S$ is cyclic of order $|H|$, thus it is isomorphic to $H$.
\end{proof}

An immediate consequence of \thref{R6} is that \thref{thNadia} applies to $\RFG$.

\subsection{A family of simple $G$-rhetorical biset functors.}

\begin{prop} \thlabel{ESSALGH}
Let $H$ be a finite group such that $|H|$ is relatively prime to $|G|$. Then 
$$\ERG(H) \cong \ER(H)\otimes_k \RQ(G)$$
as $k$-algebras. Furthermore, $\ERG(H)\neq 0$ if and only if $\ER(H)\neq 0$.
\end{prop}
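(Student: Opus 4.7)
The plan is to show that the $k$-algebra homomorphism $\nu: \ER(H)\otimes_k\RQ(G)\to\ERG(H)$ constructed above (as the bottom arrow of the commutative diagram featuring $\mu$, $\nu$, and the two projections $\pi$) is an isomorphism under the coprimality hypothesis. I will deduce this from the stronger claim that its lift $\mu: End_{\PKQ}(H)\otimes_k\RQ(G)\to End_{\PKQG}(H)$ is an isomorphism of $k$-algebras which restricts to a bijection $I_{\RQ}(H)\otimes_k\RQ(G)\to I_{\RQG}(H)$; then, since $\RQ(G)$ is $k$-flat, passing to the quotients yields that $\nu$ is an isomorphism of $k$-algebras.

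To see that $\mu$, viewed as the external product $\RQ(H\times H)\otimes_k\RQ(G)\to\RQ(H\times H\times G)$, is an isomorphism, I first check surjectivity by a Goursat argument: any subgroup $D\le H\times H\times G$ satisfies $p_{1,2}(D)/k_{1,2}(D)\cong p_3(D)/k_3(D)$, a group whose order divides $\gcd(|H|^2,|G|)=1$; hence $D=D_1\times D_2$ with $D_1\le H\times H$ and $D_2\le G$, so $[\Q[(H\times H\times G)/D]] = [\Q[(H\times H)/D_1]]\times[\Q[G/D_2]]\in\mathrm{im}(\mu)$, and Artin's induction theorem ensures such classes span $\RQ(H\times H\times G)$. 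For bijectivity I match dimensions: the coprimality of $e(H\times H)$ and $e(G)$ together with the Chinese remainder theorem gives $(\Z/n\Z)^{\times}\cong(\Z/e(H\times H)\Z)^{\times}\times(\Z/e(G)\Z)^{\times}$ (where $n=e(H\times H\times G)$), so two elements of $H\times H\times G$ are $\Q$-conjugate iff their respective components are $\Q$-conjugate in $H\times H$ and $G$, producing a bijection $c_{\Q}(H\times H)\times c_{\Q}(G)\cong c_{\Q}(H\times H\times G)$ and hence the desired equality of $k$-ranks.

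For the ideals, \thref{R2} applies under the hypothesis, so $I_{\RQG}(H)$ is generated by compositions $\beta\circ\alpha$ with $K$ cyclic of order a proper divisor of $|H|$, $\alpha\in\RQ(K\times H\times G)$ and $\beta\in\RQ(H\times K\times G)$. Since $|K|$ divides $|H|$ and is therefore coprime to $|G|$, the previous paragraph applies to the pair $(K\times H,G)$, so I write $\alpha=\sum_i\alpha_i\times a_i$ and $\beta=\sum_j\beta_j\times b_j$ with $\alpha_i\in\RQ(K\times H)$, $\beta_j\in\RQ(H\times K)$ and $a_i,b_j\in\RQ(G)$. The $k$-linear extension of \thref{lemmaESSALG} (in which the factor $Z\times W$ carries the diagonal $G$-action and so corresponds to the internal product in $\RQ(G)$) then gives
$$\beta\circ\alpha=\sum_{i,j}(\beta_j\circ\alpha_i)\times(b_ja_i),$$
where each $\beta_j\circ\alpha_i\in\mathcal{P}_{\RQ}(K,H)\circ\mathcal{P}_{\RQ}(H,K)\subseteq I_{\RQ}(H)$. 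Therefore $I_{\RQG}(H)\subseteq\mu(I_{\RQ}(H)\otimes_k\RQ(G))$; the reverse containment is immediate from the construction of $\nu$.

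For the second statement, $\RQ(G)\ne 0$ since it contains the class of the trivial module, so (because $k$ is a field) $\ER(H)\otimes_k\RQ(G)$ vanishes exactly when $\ER(H)$ does, and the isomorphism $\nu$ transfers this to $\ERG(H)$. I expect the main obstacle to be the dimension-matching step, specifically the verification that $\Q$-conjugacy on the direct product decomposes under the coprimality of exponents; the remainder of the argument is careful bookkeeping with \thref{R2} and \thref{lemmaESSALG}.
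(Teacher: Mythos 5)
Your proposal is correct and follows essentially the same route as the paper: both establish that $\mu$ is an isomorphism (surjectivity by splitting subgroups of $H\times H\times G$ using coprimality, injectivity by a dimension count) and then combine \thref{R2} with \thref{lemmaESSALG} and the analogous splitting for the pairs involving the small cyclic groups $K$ to deal with $I_{\RQG}(H)$. The only cosmetic differences are that you count $\Q$-conjugacy classes of elements via the Chinese remainder theorem where the paper counts conjugacy classes of cyclic subgroups, and that you prove the ideal correspondence $\mu\bigl(I_{\RQ}(H)\otimes_k \RQ(G)\bigr)=I_{\RQG}(H)$ and pass to quotients, whereas the paper chases an element of $\ker \nu$ back through $\mu$ --- the underlying computation is the same.
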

\begin{proof}
We are going to prove that $\nu$ in Diagram \ref{D2} is an isomorphism. First, since any transitive $H\times H\times G$-set $(H\times H\times G)/D$ is isomorphic to a product $((H\times H)/D_1) \times (G/D_2)$, for some $D_1\leq H\times H$ and $D_2\leq G$ such that $D=D_1\times D_2$, we have that in this case the $\mu$ in Diagram \ref{D2} is surjective. By counting conjugacy classes of cyclic subgroups of $H\times H$, $G$ and $H\times H\times G$, we can see that $End_{\PKQ}(H)\otimes_k \RQ(G)$ and $End_{\PKQG}(H)$ have the same dimension over $k$, thus $\mu$ is an isomorphism. Then, since $\pi \mu$ is surjective, so is $\nu$. Now let $z \in ker\; \nu$, then $(\pi\otimes Id_{\RQ(G)})(z_0)=z$ for some $z_0 \in \ER(H)\otimes \RQ(G)$. The commutativity of Diagram \ref{D2} shows that $\mu(z_0) \in I_{\RQG}(H)$, and by \thref{R2}, 
$$\mu(z_0)=\sum_i \lambda_i\left(\Q[(H\times K_i \times G)/D_i]\circ \Q[(K_i\times H\times G)/E_i]\right)$$ 
for some cyclic groups $K_i$ whose orders are proper divisors of $|H|$, some $D_i \leq H\times K_i\times G$ and $E_i\leq K_i\times H\times G$, and coefficients $\lambda_i\in k$. Since the $|K_i|$ are divisors of $|H|$, then we have $(|H\times K_i|,|G|)=1$, and so $D_i=R_i\times S_i$ for some $R_i\leq H\times K_i$ and $S_i\leq G$, and $E_i=T_i\times U_i$ for some $T_i\leq K_i\times H$ and $U_i\leq G$. By \thref{lemmaESSALG}, 
$$\mu(z_0)=\sum_i \lambda_i\left(\Q[(H\times K_i)/R_i]\circ \Q[(K_i\times H)/T_i]\right)\times \left( \Q[G/S_i]\otimes_{\Q} \Q[G/U_i]\right)$$
and so
$$z_0=\sum_i \lambda_i\left(\Q[(H\times K_i)/R_i]\circ \Q[(K_i\times H)/T_i]\right)\otimes ( \Q[G/S_i]\otimes_{\Q} \Q[G/U_i])$$
since $\mu$ is an isomorphism. Thus $z=(\pi\otimes Id_{\RQ(G)})(z_0)=0$.
\end{proof}

Let $H$ be a cyclic group of order relatively prime to $|H|$ and such that $\ER(H)\neq 0$, and let $[CS(G)]$ be a set of representatives of the conjugacy classes of cyclic subgroups of $G$. Since $\RQ(G) \cong \prod_{C\in [CS(G)]} k$ as $k$-algebras, we have 
$$\ERG(H)\cong \ER(H) \otimes \RQ(G) \cong \prod_{C\in [CS(G)]}\ER(H)$$
as $k$-algebras, so if $V$ is a simple $\ERG(H)$-module, then it is isomorphic to the restriction of scalars of a simple $\ER(H)$-module via the projection $\pi_C$ to the $C$-th coordinate factor $\ER(H)$ for a unique $C \in [CS(G)]$. Then $(1\times e^G_C)V= V$, and $(1\times e^G_D)V=0$ for any other $C\neq D\in [CS(G)]$.

Given two triplets $(H,V,C)$ and $(K,W,D)$, where $(H,V)$ and $(K,W)$ are seeds on $\mathcal{P}_{\RQ}$ such that both $|H|$ and $|K|$ are relatively prime to $|G|$ and $C$ and $D$ are cyclic subgroups of $G$, we say that $(H,V,C)$ and $(K,W,D)$ are isomorphic if $(H,V)$ and $(K,W)$ are isomorphic as seeds on $\mathcal{P}_{\RQ}$ and $C$ and $D$ are conjugates in $G$. 

If $(H,U)$ is a seed on $\mathcal{P}_{\RQG}$ such that $(|H|,|G|)=1$, then by the previous paragraphs $U$ can be seen as the restriction of scalars of a simple $\ER(H)$-module $V$ via the $C$-th projection $\pi_C:\ERG(H)\longrightarrow \ER(H)$, so we can assign $(H,U)$ a triplet $(H,V,C)$, where $(H,V)$ is a seed on $\mathcal{P}_{\RQ}$ and $C$ is a cyclic subgroup of $G$. Then, isomorphic seeds are assigned isomorphic triplets.

Now given a triplet $(H,V,C)$ with $(|H|,|G|)=1$, we can construct a simple $G$-rhetorical biset functor $T_{H,V,C}=S_{H,\pi_C^*V}$, where $\pi_C^*V$ denotes the restriction of scalars of $V$ via $\pi_C$. It is not hard to see that $T_{K,W,D}\cong T_{H,V,C}$ if and only if $(K,W,D)$ and $(H,V,C)$ are isomorphic.

\begin{cor}
There is a bijection between the set of isomorphism classes of simple $G$-rhetorical biset functors whose minimal groups have order relatively prime to $|G|$ and the set of isomorphism classes of triplets $(H,V,C)$, where $(H,V)$ is a seed on $\mathcal{P}_{\RQ}$ such that $|H|$ is relatively prime to $|G|$ and $C$ is a cyclic subgroup of $G$.
\end{cor}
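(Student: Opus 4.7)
The plan is to combine Proposition~\thref{thNadia} with the decomposition of the essential algebra provided by Proposition~\thref{ESSALGH}. By Corollary~\thref{R6}, the uniqueness of minimal groups holds for $\RQG$, so \thref{thNadia} yields a bijection between isomorphism classes of simple $G$-rhetorical biset functors and isomorphism classes of seeds $(H,U)$ on $\mathcal{P}_{\RQG}$. Restricting to those seeds with $(|H|,|G|)=1$ matches the class of simple modules described in the statement; thus the problem reduces to constructing a bijection between the set of isomorphism classes of such seeds and the set of isomorphism classes of triplets $(H,V,C)$.

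For $(|H|,|G|)=1$, Proposition~\thref{ESSALGH} together with the decomposition $\RQ(G)\cong \prod_{C\in [CS(G)]} k$ through the orthogonal primitive idempotents $e^G_C$ gives
$$\ERG(H)\;\cong\;\ER(H)\otimes_k \RQ(G)\;\cong\;\prod_{C\in [CS(G)]}\ER(H).$$
Hence any simple $\ERG(H)$-module $U$ is the restriction of scalars, via a unique projection $\pi_C$, of a simple $\ER(H)$-module $V$, the index $C$ being singled out by $(1\times e^G_C)U=U$. This assigns to the seed $(H,U)$ the triplet $(H,V,C)$. Conversely, given a triplet $(H,V,C)$, the simple module $S_{H,\pi_C^*V}=T_{H,V,C}$ constructed before the statement provides the inverse assignment on the level of simple functors.

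It remains to check that this correspondence respects the isomorphism relations. If $(H,U)\cong (K,U')$ as seeds via an isomorphism $\psi\colon H\to K$, the induced $k$-algebra isomorphism $\ERG(H)\cong \ERG(K)$ acts as the analogous isomorphism $\ER(H)\cong \ER(K)$ on the first tensor factor of the decomposition above and as the identity on the $\RQ(G)$-factor. This is the key delicate point of the argument: one must inspect the explicit definition of $\nu$ in Diagram~\ref{D2} (namely $\widehat{\alpha}\otimes a\mapsto \widehat{\alpha\times a}$) and combine it with the shifted composition formula to verify that conjugation by $\mathrm{Iso}(\psi)$ only modifies the $H$ and $K$ coordinates and leaves $a\in\RQ(G)$ untouched. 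Granting this, the idempotent $1\times e^G_C\in\ERG(H)$ corresponds to $1\times e^G_C\in\ERG(K)$, so the same $C\in[CS(G)]$ is associated with both $U$ and $^\psi U'$, while $V\cong {}^\psi W$ as $\ER(H)$-modules. Therefore the triplets $(H,V,C)$ and $(K,W,D)$ are isomorphic.

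For the reverse implication, given an isomorphism of triplets $(H,V,C)\cong(K,W,D)$ one may replace $D$ by its $G$-conjugate $C$ so that $e^G_C=e^G_D$, and then $\psi\colon H\to K$ with $^\psi W\cong V$ as $\ER(H)$-modules produces, by pullback along the same projection, an isomorphism $(H,\pi_C^*V)\cong(K,\pi_C^*W)$ of seeds. The main obstacle, as indicated, is this verification that the algebra isomorphism induced by $\psi$ is trivial on the $\RQ(G)$-factor; once this is established, the bijection between isomorphism classes of triplets and isomorphism classes of seeds follows, and combining with \thref{thNadia} concludes the proof.
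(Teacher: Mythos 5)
Your proposal is correct and follows essentially the same route as the paper: \thref{R6} plus \thref{thNadia} to reduce to seeds on $\mathcal{P}_{\RQG}$, then \thref{ESSALGH} and the decomposition $\RQ(G)\cong\prod_{C\in[CS(G)]}k$ to convert seeds $(H,U)$ with $(|H|,|G|)=1$ into triplets $(H,V,C)$, with $T_{H,V,C}=S_{H,\pi_C^*V}$ as the inverse. The compatibility check you flag but only grant (that conjugation by $\mathrm{Iso}(\psi)$ acts trivially on the $\RQ(G)$-factor, so the idempotent $1\times e^G_C$ is preserved) is exactly the point the paper dispatches with ``it is not hard to see,'' and it does go through since the conjugating element corresponds under $\nu$ to an isomorphism tensored with the unit of $\RQ(G)$.
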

\begin{proof}
By \thref{R6}, minimal groups for simple $G$-rhetorical biset functors are unique up to group isomorphism, thus \thref{thNadia} holds for $\RQG$. Now if $S$ is a simple $G$-rhetorical biset functor whose minimal group $H$ has order relatively prime to $|G|$, then $S\cong S_{H,U}$ for a unique seed $(H,U)$ up to isomorphism, and we can assign $(H,U)$ a triplet $(H,V,C)$ where $U\cong \pi_c^*V$. We define an application from the set of isomorphism classes of simple $G$-rhetorical biset functors to the set of isomorphism classes of triplets by sending the class of $S$ to the class of $(H,V,C)$. The inverse application is given by sending the class of the triplet $(H,V,C)$ to the class of $T_{H,V,C}$.
\end{proof}

\section{The category of $\CRG$-modules.}

By Bouc \cite[7.3.5]{SBb2}, the functor $\CR$ is a semisimple biset functor with infinitely many simple factors, but it is simple as a module over itself.

\begin{prop}[Romero {{\cite[4.3]{ROM1}}}] \thlabel{NR2}
$\CR$ is a simple $\mathbb{C}R_{\mathbb{C}}$-module. In particular, it is a simple Green biset functor.
\end{prop}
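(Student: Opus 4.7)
My plan is to derive this proposition as an immediate consequence of the machinery developed in Section 3, by specializing Theorem \thref{IDEALS} (or equivalently Corollary \thref{SS}) to the case $k=\F=\C$ and $G=1$. Since $kR_{\F,1}\cong \RF$ as Green biset functors (as noted after the definition of $(\F,G)$-rhetorical biset functor), one may identify $\CR$ with $\C R_{\C,1}$, placing it within the scope of those results.

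The key computational step is the identification of $\Omega(\C,\C,1)$. The trivial group has exponent $n=1$, so $\omega=1$ and both $F_1$ and $K_1$ are trivial subgroups of $(\Z/\Z)^{\times}$; moreover, $c_{\C}(1)$ consists of a single $\C$-conjugacy class, namely that of the identity. Consequently $\Omega(\C,\C,1)=c_{\C}(1)/K_1$ is a one-element set. By Theorem \thref{IDEALS}, the lattice $\mathfrak{I}_{\C,\C,1}$ of ideals of $\CR$ is isomorphic to the power set of a singleton, hence has exactly two elements: $0$ and $\CR$ itself.

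This shows that $\CR$ admits no proper non-zero ideal as a Green biset functor. Since ideals of $\CR$ (in the sense of Green biset functors) coincide with its submodules as a left $\CR$-module---a left ideal of a Green biset functor is precisely a subfunctor closed under the left action, and here the underlying ring $\CR(H)$ is commutative for every $H$---the same dichotomy shows that $\CR$ has no proper non-zero $\CR$-submodule. This gives both the simplicity of $\CR$ as a $\CR$-module and, in particular, its simplicity as a Green biset functor.

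I do not foresee any real obstacle: the whole argument is a direct specialization of \thref{IDEALS} once $\Omega(\C,\C,1)$ is correctly identified. The alternative route suggested by the introduction---Romero's original proof that every hom-set in $\mathcal{P}_{\CR}$ is generated by morphisms factoring through the trivial group, which would then reduce the statement to the simplicity of $\CR(1)\cong \C$ over itself---is conceptually informative and will be adapted in Section 4 for the shifted case $\CRG$, but for the unshifted statement it is strictly heavier than the one-line deduction from \thref{IDEALS}.
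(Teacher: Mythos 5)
Your argument is correct, and it is genuinely different from the route the paper takes for \thref{NR2}. The paper does not reprove this statement: it cites Romero, and the proof it has in mind rests on showing that every hom-set of $\mathcal{P}_{\CR}$ is generated by morphisms factoring through the trivial group, a fact whose shifted analogue (\thref{R3}) is exactly what Section 4 needs for the equivalence $\CRG-\mathcal{M}od\simeq \CR(G)-Mod$. Your deduction instead specializes \thref{IDEALS} to $k=\F=\C$, $G=1$: the identification $\Omega(\C,\C,1)=\{\ast\}$ is right, the lattice of ideals is then $\{0,\CR\}$, and your reduction of ``simple as a module over itself'' to ``no proper nonzero ideal'' is legitimate because, by the remark following \thref{AMOD}, a left ideal of a Green biset functor is a biset subfunctor whose evaluations are left ideals, and the commutativity of every $\CR(H)$ makes left and two-sided ideals coincide; there is also no circularity, since the proof of \thref{IDEALS} uses only character-theoretic computations and never invokes Romero's result. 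Indeed the paper itself already records your conclusion in greater generality as the last sentence of \thref{SS} (``$\RF$ is a simple $\F$-rhetorical biset functor''), so your proof is essentially that corollary made explicit. What the comparison buys: your route is shorter and works over any fields $k,\F$ of characteristic zero, while Romero's factorization argument, though specific to $\C$ (it uses that simple $\C[K\times H\times G]$-modules split as outer tensor products), yields strictly more than simplicity --- it shows the trivial group is minimal for every module and is the mechanism behind the equivalence of categories in Section 4, which simplicity alone does not provide.
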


The core of the proof of this proposition lies in the fact that in $\mathcal{P}_{\CR}$ the hom-sets are generated by morphisms which factor through the trivial group. We see now that this is so in the shifted case. 

\begin{prop} \thlabel{R3} 
Let $G$ be a finite group. Then for all finite groups $H$ and $K$ we have
$$\mathcal{P}_{\mathbb{C}R_{\mathbb{C},G}}(H,K)=\langle \mathcal{P}_{\mathbb{C}R_{\mathbb{C},G}}(1,K) \circ \mathcal{P}_{\mathbb{C}R_{\mathbb{C},G}}(H,1)\rangle.$$
\end{prop}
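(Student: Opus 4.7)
The plan is to show that every element of $\mathcal{P}_{\CRG}(H,K)=\CR(K\times H\times G)$ is a $\C$-linear combination of compositions $\beta\circ\alpha$ with $\alpha\in\mathcal{P}_{\CRG}(H,1)=\CR(H\times G)$ and $\beta\in\mathcal{P}_{\CRG}(1,K)=\CR(K\times G)$; this is the natural shifted analogue of the mechanism underlying \thref{NR2}.

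The first step is a reduction using that $\C$ is a splitting field for every finite group, so the irreducible $\C$-characters of $K\times H\times G$ are exactly the external tensor products of irreducible $\C$-characters of the three factors. This yields a $\C$-linear isomorphism $\CR(K)\otimes_{\C}\CR(H\times G)\cong\CR(K\times H\times G)$ given by $a\otimes b\mapsto a\times b$, where $\times$ denotes the external product in $\CR$. It is therefore enough to realise each basic tensor $a\times b$, with $a\in\CR(K)$ and $b\in\CR(H\times G)$, as a single composition through the trivial group.

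For the second step, given such $a$ and $b$, I would take $\alpha:=b\in\CR(H\times G)=\mathcal{P}_{\CRG}(H,1)$ and $\beta:=Inf^{K\times G}_{K}a\in\CR(K\times G)=\mathcal{P}_{\CRG}(1,K)$, and claim $\beta\circ\alpha=a\times b$. The key observation is that when the intermediate object in the composition formula of Section 2.3 is the trivial group, the defining biset (which carries a redundant $\overleftarrow{1}$-factor) collapses to the biset defining the shifted product $\times^d$ on $\CR_G$; hence $\beta\circ\alpha=\beta\times^d\alpha$. By \thref{shiftCOMP} this equals $(Inf^{K\times G}_{K}a)\times^d b=a\times b$, as required. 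The main place needing care is this biset identification, but it is a routine check analogous to the one in the proof of \thref{shiftCOMP}, so I expect no substantive obstacle; the genuine content of the proposition is the tensor decomposition used in the first step, which is why the statement is specific to $\C$.
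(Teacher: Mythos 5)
Your proposal is correct and follows essentially the same route as the paper: decompose $\CR(K\times H\times G)$ via external tensor products (using that $\C$ splits every group algebra) and realise each basic tensor as a composition through the trivial group via \thref{shiftCOMP}. Note that your ``key observation'' — that composition through the trivial group in $\mathcal{P}_{\CRG}$ collapses to the shifted product $\times^d$ — is exactly \thref{L2} applied to the Green biset functor $\CRG$, which is how the paper phrases that step, so no new biset check is actually needed.
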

\begin{proof}
The hom-set $\mathcal{P}_{\mathbb{C}R_{\mathbb{C},G}}(H,K)=\CR(K\times H\times G)$ is generated over $\C$ by the isomorphism classes of simple $\C [K\times H\times G]$-modules. Let $S$ be a simple $\mathbb{C}[K\times H \times G]$-module, then there exist a simple $\mathbb{C}K$-module $V$ and a simple $\mathbb{C}[H\times G]$-module $W$ such that $S\cong V\otimes_{\C} W$ as $\mathbb{C}[K\times H\times G]$-modules. By \thref{L2} and \thref{shiftCOMP},
$$(Inf_K^{K\times 1\times G}V) \circ (Inf_{H\times G}^{1\times H\times G}W) \cong (Inf_K^{K\times G}V) \times^d W \cong V\otimes_{\C} W \cong S$$
thus $[S]$ factors through the trivial group.
\end{proof}

From \thref{R3} we see that $\widehat{\CRG}(H)\neq 0$ if and only if $H$ is the trivial group. Then the simple $\CRG$-modules are in one-to-one correspondence with the simple $\widehat{\mathbb{C}R_{\mathbb{C},G}}(1)$-modules, up to isomorphism. By \thref{L1}, $\widehat{\mathbb{C}R_{\mathbb{C},G}}(1) \cong \mathbb{C}R_{\mathbb{C}}(G)$ as $\C$-algebras, so the number of simple $\mathbb{C}R_{\mathbb{C},G}$-modules is the same as the number of conjugacy classes of elements in $G$. Thus the simple $\CRG$-modules are precisely the minimal ideals $I_C$ of $\CRG$ up to isomorphism, where $C$ runs over the conjugacy classes of elements in $G$. Moreover, by \thref{SS},
$$\CRG \cong \bigoplus_{C\in \mathcal{C}(G)}I_C$$
as $\CRG$-modules, where $\mathcal{C}(G)$ denotes the set of conjugacy classes of elements of $G$.

\subsection{An equivalence.}

We prove now that the evaluation at the trivial group $ev_1: \CRG-\mathcal{M}od \longrightarrow \CR(G)-Mod$ is an equivalence of categories.

\begin{prop}
There is a $\C$-linear equivalence between the category $\CRG-\mathcal{M}od$ and the category $\CR(G)-Mod$.
\end{prop}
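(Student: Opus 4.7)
The plan is to show that the evaluation functor $ev_1$ is itself the desired equivalence, with quasi-inverse given by its left adjoint $L_1$ described in the excerpt. By \thref{L1}, we have $End_{\mathcal{P}_{\CRG}}(1) = \CRG(1\times 1) \cong \CR(G)$ as $\C$-algebras, so for a $\CR(G)$-module $V$ the $\CRG$-module $L_{1,V}$ is given at a finite group $H$ by $L_{1,V}(H) = \CR(H\times G)\otimes_{\CR(G)} V$, with structure maps coming from shifted composition in $\mathcal{P}_{\CRG}$.

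The unit of the adjunction $L_1 \dashv ev_1$ at $V$ is the natural map $V \to ev_1(L_{1,V}) = \CR(G)\otimes_{\CR(G)} V$ sending $v\mapsto 1\otimes v$, which is an isomorphism. The content is to prove that the counit $\epsilon_M: L_{1,M(1)} \to M$ is an isomorphism for every $\CRG$-module $M$; at $H$ this counit sends $\beta \otimes m \in \CR(H\times G)\otimes_{\CR(G)} M(1)$ to $M(\beta)(m)$, viewing $\beta$ as an element of $\mathcal{P}_{\CRG}(1,H)$.

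The crucial observation is the following vanishing principle: \emph{any $\CRG$-module $N$ with $N(1)=0$ is zero}. Indeed, given $n\in N(H)$, \thref{R3} lets us write $Id_H = \sum_i \beta_i \circ \gamma_i$ in $\mathcal{P}_{\CRG}(H,H)$ with $\gamma_i \in \mathcal{P}_{\CRG}(H,1)$ and $\beta_i\in\mathcal{P}_{\CRG}(1,H)$, so that $n = \sum_i N(\beta_i)(N(\gamma_i)(n)) = 0$, since each $N(\gamma_i)(n)\in N(1)=0$.

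To conclude, one checks that $ev_1(\epsilon_M):M(1)\to M(1)$ is the identity (it sends $1\otimes m\mapsto m$), so both $\ker\epsilon_M$ and $\coker\epsilon_M$ vanish at $1$. Since $ev_1$ is exact, the vanishing principle then forces $\ker\epsilon_M = \coker\epsilon_M = 0$, i.e. $\epsilon_M$ is an isomorphism. Consequently $L_1\dashv ev_1$ is an adjoint equivalence, giving the $\C$-linear equivalence $ev_1:\CRG-\mathcal{M}od \simeq \CR(G)-Mod$. The only conceptual step is extracting the vanishing principle from \thref{R3}; everything else is formal adjunction bookkeeping.
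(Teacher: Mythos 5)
Your proof is correct, but it takes a genuinely different route from the paper's. The paper proves the counit of $L_1\dashv ev_1$ is an isomorphism by first covering an arbitrary module $M$ with representable functors $\C R_{\C,H\times G}$, using right-exactness of $L_1$ to reduce to the representable case, and then checking that case by hand: surjectivity of the composition map $\CR(K\times 1\times G)\otimes_{\CR(1\times 1\times G)}\CR(1\times H\times G)\longrightarrow \CR(K\times H\times G)$ comes from \thref{R3}, and injectivity from an explicit identification of both sides with $\CR(K)\otimes_{\C}\CR(H)\otimes_{\C}\CR(G)$ (a dimension count via the isomorphisms $\CR(K\times 1\times G)\cong \CR(K)\otimes_{\C}\CR(G)$ as one-sided $\CR(G)$-modules). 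You instead extract from \thref{R3} a vanishing principle --- writing $Id_H=\sum_i\beta_i\circ\gamma_i$ with $\gamma_i\in\mathcal{P}_{\CRG}(H,1)$, $\beta_i\in\mathcal{P}_{\CRG}(1,H)$ shows any $\CRG$-module vanishing at the trivial group is zero --- and then kill $\ker\epsilon_M$ and $\mathrm{coker}\,\epsilon_M$ by evaluating at $1$, using exactness of $ev_1$ and the fact that $ev_1(\epsilon_M)$ is the canonical isomorphism $\CR(G)\otimes_{\CR(G)}M(1)\cong M(1)$ (here \thref{L1} identifies $End_{\mathcal{P}_{\CRG}}(1)$ with $\CR(G)$, and kernels and cokernels are computed objectwise since $\CRG$-$\mathcal{M}od$ is equivalent to a functor category). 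This handles all modules at once, dispenses with the projective presentation and with the injectivity/dimension-count step entirely, and in fact recovers the paper's tensor-product isomorphism as a corollary by applying the counit isomorphism to the representables; what the paper's computation buys in exchange is the explicit description $\CR(K\times H\times G)\cong\CR(K\times 1\times G)\otimes_{\CR(1\times 1\times G)}\CR(1\times H\times G)$, which is of some independent interest.
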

\begin{proof}
Let $L_1$ be the left adjoint of $ev_1$ as defined in Section 2. It is easy to see that $ev_1\circ L_1 \cong 1_{\CR(G)-Mod}$. From \thref{PP}, any $\CRG$-module $M$ can be covered by a sum of representable functors, so we have an exact sequence
$$\xymatrix{
\bigoplus_i \C R_{\C, H_i\times G}\ar[r] &\bigoplus_j\C R_{\C, H_j\times G}\ar[r] &M\ar[r]  &0.
}$$
of $\CRG$-modules. Then we have a commutative diagram
\begin{equation} \label{D3}
\begin{gathered}
\xymatrix{
\bigoplus_i\C R_{\C, H_i\times G}\ar[r] &\bigoplus_j\C R_{\C, H_j\times G}\ar[r] &M\ar[r]  &0\\
\bigoplus_iL_{1,\CR(1\times H_i\times G)}\ar[r]\ar[u] &\bigoplus_jL_{1,\CR(1\times  H_j\times G)}\ar[r]\ar[u] &L_{1,M(1)}\ar[r]\ar[u]  &0
}
\end{gathered}
\end{equation}
where the vertical arrows are the component arrows of the counit $L_1\circ ev_1\longrightarrow 1_{\CRG -\mathcal{M}od}$. Since $L_1$ is right exact, the bottom row is exact. We want to prove that the arrow from $L_{1,M(1)}$ to $M$ is an isomorphism. It is enough to prove that the left-hand side and the middle vertical arrows are isomorphisms, for this would imply that the right-hand side vertical arrow is an isomorphism too. Then it is enough to prove that the counit component arrow at $\C R_{H\times G}$ is an isomorphism for any $H$. This natural transformation is given by composition
$$\xymatrix{
\CR(K\times 1\times G) \otimes_{\CR(1\times 1\times G)} \CR(1\times H\times G)\ar[r]^-{\circ} &\CR(K\times H\times G)
}$$
$$a\otimes b \mapsto a\circ b$$
for any $K$, and by \thref{R3}, this application is surjective. We will prove that this natural transformation is an isomorphism by showing that $\CR(K\times 1\times G) \otimes_{\CR(1\times 1\times G)} \CR(1\times H\times G)$ and $\CR(K\times H\times G)$ are isomorphic finite-dimentional $\C$-vector spaces. \thref{L1} shows that 
$$Inf^{1\times 1\times G}_G: \CR(G)\longrightarrow End_{\mathcal{P}_{\CRG}}(1)$$ 
is an isomorphism of algebras, and $\CR (K\times 1\times G)$ is a right $\CR(G)$-module by restrictions of scalars. On the other hand, 
$$Id_{\CR(K)}\otimes Inf^{1\times G}_G:\CR(K)\otimes_{\C} \CR(G)\longrightarrow \CR(K\times 1\times G)$$
sending $a\otimes b$ to $a\times (Inf^{1\times G}_Gb)$ is an isomorphism of $\C$-vector spaces. This application is an isomorphism of right $\CR(G)$-modules, since 
\begin{align*}
a\times (Inf^{1\times G}_G(bc)) &=(a\times (Inf^{1\times G}_Gb))\times^d(Inf^{1\times G}_Gc)\\
& =(a\times (Inf^{1\times G}_{G}b))\circ (Inf^{1\times 1\times G}_Gc)
\end{align*}
for any $a\in \CR(K)$ and $b,c\in \CR(G)$. Similarly, it can be proved that $\CR(1\times H\times G)$ and $\CR(H)\otimes_{\C}\CR(G)$ are isomorphic as left $\CR(G)$-modules. Then we have 
\begin{align*}
\CR (K\times 1\times G) &\otimes_{\CR(1\times 1\times G)} \CR(1\times H\times G)\\
&\cong \left(\CR(K)\otimes_{\C} \CR(G)\right)\otimes_{\CR(G)}\left(\CR(H)\otimes_{\C} \CR(G)\right)\\
&\cong \CR(K)\otimes_{\C} \CR(H)\otimes_{\C} \CR(G)\\
&\cong \CR(K\times H\times G).
\end{align*}

\end{proof}

\section*{Acknowledgement}

This work is part of my PhD project at the Posgrado Conjunto en Ciencias Matemáticas UNAM-UMSNH, supported by a CONACyT grant. I would like to thank to my advisors Gerardo Raggi and Nadia Romero for their commitment in directing this project. I am deeply grateful to Serge Bouc for all the enlightening conversations at LAMFA. I also want to thank to LaSoL for its financial support.

\end{document}